\documentclass[12pt,reqno]{amsart}
\usepackage{amsmath}
\usepackage{amsthm}
\usepackage{amsfonts}
\usepackage{amssymb}
\usepackage{mathrsfs}
\usepackage[margin=1.35in, top =1.25in, bottom = 1.25in]{geometry}
\usepackage{setspace}
\usepackage{tikz}
\usetikzlibrary{matrix, arrows}
\usepackage{tikz-cd}
\usepackage{enumitem}
\usepackage[colorlinks, breaklinks]{hyperref}
\hypersetup{linkcolor=black,citecolor=blue,filecolor=black,urlcolor=black} 
\usepackage{pdflscape}
\usepackage{array}
\usepackage{adjustbox}
\usepackage{tabularx}
\usepackage{multirow}
\usepackage{multicol}
\usepackage{float}
\usepackage{arydshln}
\usepackage{appendix}


\renewcommand{\a}{\mathfrak{a}}
\DeclareMathOperator{\Ann}{Ann}

\DeclareMathOperator{\ch}{ch}				
\DeclareMathOperator{\coker}{Coker}
\newcommand{\dd}{\partial}
\DeclareMathOperator{\Ext}{Ext}
\DeclareMathOperator{\hgt}{ht}  			
\DeclareMathOperator{\Id}{Id}				
\DeclareMathOperator{\image}{Im}
\renewcommand{\Im}{\image}
\newcommand{\iso}{\cong}
\newcommand{\kk}{k}					
\DeclareMathOperator{\ld}{ld}				
\newcommand{\longto}{\longrightarrow}
\DeclareMathOperator{\pd}{pd}  			
\renewcommand{\phi}{\varphi}
\DeclareMathOperator{\rank}{rank}
\DeclareMathOperator{\reg}{reg}			
\DeclareMathOperator{\Sym}{Sym}
\DeclareMathOperator{\syz}{Syz} 	
\newcommand{\T}{\mathsf{T}}				
\DeclareMathOperator{\Tor}{Tor}
\DeclareMathOperator{\unmixed}{unm}


\newtheorem{thm}{Theorem}[section]
\newtheorem{lemma}[thm]{Lemma}

\newtheorem{prop}[thm]{Proposition}
\newtheorem{cor}[thm]{Corollary}

\newtheorem{mainthm}{Theorem}

\theoremstyle{definition}

\newtheorem{example}[thm]{Example}
\newtheorem{rmk}[thm]{Remark}
\newtheorem{question}[thm]{Question}

\newtheorem*{notation}{Notation}
\newtheorem*{ack}{Acknowledgements}

\numberwithin{equation}{section}
\numberwithin{table}{section}


\title{The Structure of Koszul Algebras Defined by Four Quadrics}
\author{Paolo Mantero and Matthew Mastroeni}
\date{}

\begin{document}

\begin{abstract}
Avramov, Conca, and Iyengar ask whether $\beta_i^S(R) \leq \binom{g}{i}$ for all $i$ when $R=S/I$ is a Koszul algebra minimally defined by $g$ quadrics.  In recent work, we give an affirmative answer to this question when $g \leq 4$ by completely classifying the possible Betti tables of Koszul algebras defined by height-two ideals of four quadrics.  Continuing this work, the current paper proves a structure theorem for Koszul algebras defined by four quadrics. We show that all these Koszul algebras are LG-quadratic, proving that an example of Conca of a Koszul algebra that is not LG-quadratic is minimal in terms of number of defining equations.  We then characterize precisely when these rings are absolutely Koszul, and establish the equivalence of the absolutely Koszul and Backelin--Roos properties up to field extensions for such rings (in characteristic zero).  The combination of the above paper with the current one provides a fairly complete picture of all Koszul algebras defined by $g \leq 4$ quadrics.
\end{abstract}

\maketitle

\begin{spacing}{1.1}
\section{Introduction}

A graded ideal $I$ in a standard graded polynomial ring $S$ over a field $\kk$ defines a \emph{Koszul algebra} $R = S/I$ if $\kk \cong R/R_+$ has a linear free resolution over $R$.  This extraordinary homological condition leads to numerous other restrictions on the Hilbert series and Betti numbers of $R$ over $S$; see \cite{Froberg:Koszul:algebras:survey} and \cite{Koszul:algebras:and:their:syzygies}.  In particular, a question of Avramov, Conca, and Iyengar asks:  

\begin{question}[{\cite[6.5]{free:resolutions:over:Koszul:algebras}}] \label{Betti:number:bound:for:Koszul:algebras}
If $R$ is Koszul and $I$ is minimally generated by $g$ quadrics, does the following inequality hold for all $i$?
\[ \beta^S_i(R) \leq \binom{g}{i} \]
In particular, is $\pd_S R \leq g$?
\end{question}

This question is known to have an affirmative answer when $R$ is LG-quadratic.  We say that $R$ or $I$ is \emph{G-quadratic} if, after a suitable linear change of coordinates $\phi: S \to S$, the ideal $\phi(I)$ has a Gr\"obner basis consisting of quadrics.  We also say that $R$ or $I$ is \emph{LG-quadratic} if $R$ is a quotient of a G-quadratic algebra $A$ by an $A$-sequence of linear forms.  Every G-quadratic algebra is Koszul by upper semicontinuity of the Betti numbers; see \cite[3.13]{upper:semicontinuity}.  Since deforming by a regular sequence of linear forms preserves Koszulness (see Proposition \ref{passing:Koszulness:to:and:from:quotients}), every LG-quadratic algebra is also Koszul. 

Almost all known examples of (commutative) Koszul algebras are LG-quadratic.  They include all quotients by quadratic monomial ideals, all quadratic complete intersections \cite[1.2.5]{Caviglia:PhD:thesis}, the coordinate rings of Grassmannians in their Pl\"ucker embedding \cite{Grassmannians:are:G-quadratic} and most canonical curves \cite{Grobner:flags}, many types of toric rings \cite{Hibi:rings} \cite{normal:polytopes:and:triangulations} \cite{Koszul:bipartite:edge:rings}, and all suitably high Veronese subrings of any standard graded algebra \cite{Eisenbud:Reeves:Totaro}. However, there is one notable exception due to Conca.

\begin{example}[{\cite[3.8]{Koszul:algebras:and:their:syzygies}}] \label{Koszul:but:not:LG-quadratic}
The ring 
\[ R = \kk[x,y,z,w]/(xy, xw, (x-y)z, z^2, x^2+zw), \] 
defined by $g = 5$ quadrics, is Koszul by a filtration argument, but it is not LG-quadratic since the numerator of its Hilbert series 
\[ H_R(t) = \frac{1+2t-2t^2-2t^3+2t^4}{(1-t)^2} \] 
cannot be realized by any 5-generated edge ideal.  Nonetheless, the Betti table of $R$ is
\begin{center}
   \begin{tabular}{c|cccccccc}
  & 0 & 1 & 2 & 3 & 4 \\ 
\hline 
0 & 1 & -- & --  & -- & -- \\ 
1 & -- & 5 & 4  & -- & -- \\
2 & -- & -- & 4 & 6 & 2 \\
\end{tabular}
\vspace{0.2 cm}
\end{center}
so Question \ref{Betti:number:bound:for:Koszul:algebras} still has an affirmative answer in this case.  \end{example}

In general, Question \ref{Betti:number:bound:for:Koszul:algebras} is known to have an affirmative answer for Koszul algebras defined by $g \leq 3$ quadrics \cite[4.5]{Koszul:algebras:defined:by:3:quadrics} and for Koszul almost complete intersections (where $\hgt I = g -1$) with any number of generators \cite{Koszul:ACI's}.  In \cite{Koszul:algebras:defined:by:4:quadrics}, we give a strong affirmative answer to Question \ref{Betti:number:bound:for:Koszul:algebras} for $g = 4$ quadrics by determining the possible Betti tables of Koszul algebras defined by height two ideals minimally generated by four quadrics.  Recording the \emph{graded Betti number} $\beta_{i,i+j}^S(R) = \dim_\kk \Tor_i^S(R, \kk)_{i +j}$ in column $i$ and row $j$ of each table and representing zero entries by ``$-$'' for readability, they are:

\begin{thm}[{\cite[4.7]{Koszul:algebras:defined:by:4:quadrics}}] \label{Koszul:4:quadric:height:2:Betti:tables}
Let $R = S/I$ be a Koszul algebra defined by four quadrics with $\hgt I = 2$.  Then the Betti table of $R$ over $S$ is one of the following:
\vspace{1 ex}
\begin{center}
\begin{minipage}{\textwidth}
\begin{multicols}{2}
\begin{enumerate}[label = \textnormal{(\roman*)}]
\item 
\textnormal{
\begin{tabular}{c|cccccccc}
  & 0 & 1 & 2 & 3  \\ 
\hline 
0 & 1 & -- & --  & --\\ 
1 & -- & 4 & 4  & 1 
\end{tabular}
}

\vspace{1 ex}

\item
\textnormal{
\begin{tabular}{c|cccccccc}
  & 0 & 1 & 2 & 3 & 4 \\ 
\hline 
0 & 1 & -- & --  & -- & -- \\ 
1 & -- & 4 & 3  & 1  & -- \\
2 & -- & -- & 3 & 3 & 1
\end{tabular}
}

\item
\textnormal{
\begin{tabular}{c|cccccccc}
  & 0 & 1 & 2 & 3  \\ 
\hline 
0 & 1 & -- & --  & -- \\ 
1 & -- & 4 & 3  & -- \\
2 & -- & -- & 1 & 1 
\end{tabular}
}

\vspace{1 ex}

\item
\textnormal{
\begin{tabular}{c|cccccccc}
  & 0 & 1 & 2 & 3 & 4 \\ 
\hline 
0 & 1 & -- & --  & -- & -- \\ 
1 & -- & 4 & 2  & --  & -- \\
2 & -- & -- & 4 & 4 & 1
\end{tabular}
}
\end{enumerate}
\end{multicols}
\end{minipage}
\end{center}
\vspace{1 ex}
\end{thm}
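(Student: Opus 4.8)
The plan is to split the argument into a numerical phase, in which Koszulness cuts the Hilbert function of $R$ down to a short list, and a structural phase, in which codimension-two ideal theory pins down the minimal free resolution for each surviving Hilbert function. In the numerical phase I would use two consequences of the Koszul property of $R$: the Avramov--Conca--Iyengar slope bound $t^S_i(R) := \max\{j : \beta^S_{i,j}(R) \neq 0\} \leq 2i$, which together with the elementary vanishing $\beta^S_{i,j}(R) = 0$ for $j \leq i$ (valid since $I \subseteq \mathfrak m^2$) confines every nonzero $\beta^S_{i,i+j}(R)$ to a row $1 \leq j \leq i$; and the identity $\sum_i \dim_\kk \Tor^R_i(\kk,\kk)\,t^i = H_R(-t)^{-1}$, which forces $H_R(-t)^{-1}$ to have non-negative integer coefficients. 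Writing $H_R(t) = h_R(t)/(1-t)^{\dim R}$, the hypotheses $\hgt I = 2$ and $I$ minimally generated by four quadrics already force $h_R(t) = 1 + 2t - t^2 + \cdots$, and feeding this into the non-negativity of $H_R(-t)^{-1}$ should leave only finitely many possibilities for $h_R$ --- conjecturally exactly the four $h$-polynomials $1+2t-t^2$, $1+2t-t^2-t^3+t^4$, $1+2t-t^2-t^3$, and $1+2t-t^2-2t^3+t^4$ underlying the four tables --- so that in particular $e(R) = h_R(1) \in \{1,2\}$. A technical wrinkle here is that neither $\dim R$ nor the number of variables of $S$ is bounded and $R$ is never Cohen--Macaulay (an Artinian reduction would force four quadrics in two variables), so this step must be carried out intrinsically, working with the minimal presentation (number of variables $= \dim_\kk R_1$) and tracking $H^0_{\mathfrak m}(R)$ rather than killing a maximal regular sequence of linear forms.

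In the structural phase I would, for each surviving Hilbert function, determine the minimal $S$-free resolution of $R$ exactly. The Euler relation $\sum_i(-1)^i\beta^S_{i,j}(R) = [t^j]\big(h_R(t)(1-t)^2\big)$ together with the row restriction above is \emph{not} sufficient: it still allows spurious ``ghost'' resolutions obtained by inserting cancelling consecutive Betti pairs (for instance it does not by itself rule out $\pd_S R = 4$ when $h_R = 1+2t-t^2$), so I need genuine control of the ideal $I$. I would obtain it from codimension-two structure theory. After disposing separately of the genuinely degenerate configurations (the four quadrics sharing a common linear factor, or no two of them generating a height-two complete intersection), a general change of coordinates produces a height-two complete intersection $J = (q_1,q_2) \subseteq I$ of two of the quadrics; the linked ideal $J:I$ is then unmixed of codimension two with small multiplicity $e\big(S/(J:I)\big) = 4 - e(R) \in \{2,3\}$, such ideals form a very short list whose resolutions are accessible (Hilbert--Burch in the Cohen--Macaulay subcase), and the information transfers back to $I$ along the linkage mapping cone, the remaining discrepancy between the unmixed part $I^{\mathrm{un}}$ and $I$ being governed by an embedded component that the multiplicity bound forces to sit at the irrelevant ideal only. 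A possibly cleaner alternative is to analyze $R_{\mathrm{red}}$ directly: since $e(R) \leq 2$, a general hyperplane section of $R_{\mathrm{red}}$ has degree at most $2$, so $R_{\mathrm{red}}$ is a variety of minimal degree --- a linear space, a quadric hypersurface, a pair of linear spaces, or a cone over one of these --- and for each of these configurations one enumerates the finitely many height-two ideals of four quadrics realizing the prescribed Hilbert function and computes each resolution.

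The step I expect to be the main obstacle is precisely this last one: ruling out the ghost Betti numbers, i.e.\ showing the resolution is as small as the Hilbert function permits. Numerics alone cannot do this, so it requires honest control of the module structure of the resolution --- for example the vanishing of certain Koszul homology modules or $\Tor^S$-strands --- which leans on having a concrete description of $I$ as an ideal assembled from linear forms and a complete intersection of quadrics modulo an $\mathfrak m$-primary piece; the degenerate boundary cases will also need separate, somewhat ad hoc arguments. Finally, to certify that all four tables actually occur --- so that the classification is sharp --- I would give an explicit example realizing each, most conveniently quadratic monomial ideals or small perturbations of them; for instance $\kk[x,y,z]/(x^2, xy, xz, y^2)$ is Koszul with $\hgt I = 2$ and realizes table (i).
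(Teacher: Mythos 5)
The theorem you are proving is cited in this paper from the authors' earlier work \cite[4.7]{Koszul:algebras:defined:by:4:quadrics}; the present paper contains no proof of it, so there is no internal argument to compare against. Evaluating your outline on its own terms: your four conjectured $h$-polynomials do match the four Betti tables exactly (dividing each table's alternating-sum polynomial by $(1-t)^2$ recovers them), and the two-phase structure --- numerical constraints narrowing the Hilbert series, then a case-by-case structural analysis of $I$ --- is clearly the right shape of argument and is consistent with how the present paper handles the downstream classification.

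The serious gap is in the numerical phase. You assert that the slope bound $\beta^S_{i,j}(R) = 0$ unless $i < j \le 2i$, together with coefficientwise non-negativity of $1/H_R(-t)$ and the leading terms $h_R = 1 + 2t - t^2 + \cdots$, ``should leave only finitely many possibilities for $h_R$.'' That does not follow. Neither constraint bounds $\deg h_R$, equivalently $\reg_S R$, equivalently $\pd_S R$; and non-negativity of the coefficients of $1/H_R(-t)$ is satisfied by $h$-polynomials of arbitrarily high degree (it is a condition on the location of the smallest-modulus root, not on degree). To close the finiteness claim you need an a priori bound on $\pd_S R$ --- for instance from the Huneke--Mantero--McCullough--Seceleanu results \cite{projective:dimension:of:height:2:ideals:of:quadrics} on projective dimension of codimension-two algebras presented by quadrics, which this paper invokes in a different connection --- or you would need $\beta^S_i(R) \le \binom{4}{i}$, but that is essentially the conclusion of the theorem and would be circular. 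Without that input, the reduction to four $h$-polynomials is unsupported, and everything downstream is conditional.

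Two smaller corrections. The ``degenerate configurations'' you propose to dispose of separately do not actually arise under the hypotheses: if the four quadrics share a common linear factor then $\hgt I = 1$, not $2$, and over the algebraically closed (in particular infinite) field $\kk$ a height-two ideal of quadrics always contains a regular sequence of two of its minimal generators, so the ``no two quadrics form a height-two complete intersection'' case is vacuous. Also, the appeal to varieties of minimal degree is imprecise: in codimension two the minimal degree is $3$, whereas the multiplicities actually occurring here are $1$ and $2$, so $R_{\mathrm{red}}$ is \emph{sub}-minimal degree (a linear space, a pair of linear spaces, or of lower codimension after discarding embedded components) and falls outside the Del Pezzo--Bertini classification you are implicitly invoking. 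Finally, your own flag is right: eliminating ghost Betti pairs is the hard part, and the structural tools this paper deploys for the analogous downstream problem --- linkage and mapping cones, Hilbert--Burch, annihilators of $\Ext$ modules via Grothendieck duality, and $1$-genericity arguments --- are indicative of the amount of resolution-theoretic detail a complete proof would need.
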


The current paper is the natural continuation of the above work.  Our main result (Theorem \ref{thm:main}) is a structure theorem for the height two ideals generated by four quadrics defining Koszul algebras over an algebraically closed field.  Combining it with previous structure theorems for Koszul almost complete intersections with any number of generators \cite{Koszul:ACI's} yields a complete description of the structure of Koszul algebras defined by $g \leq 4$ quadrics.  We illustrate this when $g = 4$ in the theorem below.

\begin{mainthm} \label{structure:theorem}
Let $S$ be a polynomial ring over an algebraically closed field $\kk$ and $I \subseteq S$ be an ideal generated by $g = 4$ quadrics.  Then $R = S/I$ is Koszul if and only if $I$ has one of the following forms:
\begin{enumerate}[label = \textnormal{(\arabic*)}]
\item $(a_1x, \dots, a_4x)$ for some linear forms $x, a_1, \dots, a_4$ with $\hgt (a_1,\dots, a_4) = 4$ when $\hgt I = 1$. 
\item When $\hgt I = 2$:
\begin{enumerate}[label = \textnormal{(\roman*)}]
\item $(x, y) \cap (z, w)$ or $(x, y)^2 + (xz + yw)$ for independent linear forms $x$, $y$, $z$ and $w$, or $(xy, xz, xw, q)$ for independent linear forms $y$, $z$, and $w$ and some linear form $x$ and quadric $q \in (y, z, w) \setminus (x)$.
\item $(xy, xz, xw, q)$ for independent linear forms $y$, $z$, and $w$ and some linear form $x$ and quadric $q$ which is a nonzerodivisor modulo $(xy, xz, xw)$.
\item $(xz, yz, a_3x+b_3y, a_4x+b_4y)$ for some linear forms $x$, $y$, $z$, $a_3$, $a_4$, $b_3$, and $b_4$ such that $\hgt (x, y) = \hgt (a_3x+b_3y, a_4x+b_4y) = 2$ and $\hgt (z, a_3x+b_3y, a_4x+b_4y, a_3b_4-a_4b_3) = 3$.
\item $(a_1x, a_2x, b_3y, b_4y)$ for some linear forms $x$, $y$, $a_i$, $b_i$ such that $(a_1x,a_2x)$ and $(b_3y, b_4y)$ are transversal ideals with $\hgt(a_1, a_2) = \hgt(b_3, b_4) = 2$.
\end{enumerate}

\item When $\hgt I = 3$:
\begin{enumerate}[label = \textnormal{(\roman*)}]
\item $I = (xz, xw, q_3, q_4)$, where $x, z, w$ are linear forms and $q_3, q_4$ are quadrics forming a regular sequence on $S/(xz, xw)$;
\item $I = I_2(M)+(q_4)$ where $M$ is a $3 \times 2$ matrix of linear forms of $S$ with $\hgt I_2(M) = 2$ and $q_4$ is a quadric regular on $S/I_2(M)$.
\end{enumerate}

\item $I$ is a complete intersection of quadrics when $\hgt I = 4$.

\end{enumerate}

\end{mainthm}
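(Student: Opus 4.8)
The plan is to stratify the proof according to $\hgt I$, which lies in $\{1,2,3,4\}$ since $I$ is generated by four forms. Two of the four cases are short: when $\hgt I = 4$ the four quadrics form a regular sequence, and every quadratic complete intersection is Koszul by \cite[1.2.5]{Caviglia:PhD:thesis}, which is part (4); when $\hgt I = 3$ the ideal is an almost complete intersection, so part (3) is the specialization to $g = 4$ generators of the structure theorem for Koszul almost complete intersections proved in \cite{Koszul:ACI's}, after rewriting the two families it produces in the stated forms $(xz,xw,q_3,q_4)$ and $I_2(M) + (q_4)$.

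For $\hgt I = 1$ I would argue directly. A minimal prime $P$ of $I$ of height one is principal in the polynomial ring $S$, say $P = (f)$ with $f$ irreducible, and since each of the four quadric generators lies in $(f)$, the form $f$ divides all of them. If $\deg f = 2$ then all four generators are scalar multiples of $f$, so $I = (f)$ is principal, contradicting minimal generation by four quadrics; hence $\deg f = 1$. Writing $x = f$ gives $q_i = x a_i$ for linear forms $a_i$, so $I = x(a_1,\dots,a_4)$, and minimal generation by four elements forces $a_1,\dots,a_4$ to be linearly independent, i.e.\ $\hgt(a_1,\dots,a_4) = 4$. Conversely, after a suitable linear change of coordinates---treating separately the cases $x \in (a_1,\dots,a_4)$ and $x \notin (a_1,\dots,a_4)$---any such $I$ becomes a monomial ideal generated by quadrics, hence defines a Koszul algebra; this settles part (1).

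The heart of the argument is $\hgt I = 2$, and the starting point is Theorem \ref{Koszul:4:quadric:height:2:Betti:tables}: the Betti table of $R$ over $S$ must be one of the four explicit tables (i)--(iv) listed there, which fixes $\pd_S R$, $\reg_S R$, the dimension and multiplicity of $R$, and the shapes of the linear and quadratic strands. For each table I would reconstruct $I$ up to a linear change of coordinates from this numerical data together with a geometric study of the codimension-two scheme $V(I)$: passing to the minimal primary decomposition $I = J \cap Q$ to separate the height-two unmixed part $J$ from a possible embedded component $Q$, identifying the linear subspaces and the residual quadric that occur, and using linkage inside $J$ where it is Cohen--Macaulay. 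I expect the four tables to match the four families (2)(i)--(iv) in order: the unique table with a pure linear strand yielding the three forms $(x,y)\cap(z,w)$, $(x,y)^2 + (xz+yw)$, and $(xy,xz,xw,q)$ with $q \in (y,z,w) \setminus (x)$; and the remaining three tables, distinguished by their projective dimensions and the widths of their quadratic strands, yielding respectively $(xy,xz,xw,q)$ with $q$ a nonzerodivisor modulo $(xy,xz,xw)$, the determinantal-type ideal $(xz,yz,a_3x+b_3y,a_4x+b_4y)$, and the transversal ideal $(a_1x,a_2x,b_3y,b_4y)$. For the converse---that each listed ideal is in fact Koszul---I would, in each of these finitely many families and after a linear change of coordinates, exhibit a quadratic Gr\"obner basis or realize the ring as a quotient of a G-quadratic algebra by a regular sequence of linear forms, i.e.\ verify LG-quadraticity directly; this is also what underlies the LG-quadraticity of these algebras mentioned in the introduction.

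I expect the genuine difficulty to be concentrated in the height-two analysis, precisely because the Betti table does not determine $I$ up to coordinates: several inequivalent ideal structures share a single table, so one must carefully isolate the degenerate loci---for instance when the two planes cutting out $(x,y)\cap(z,w)$ meet along a line, or when $q$ degenerates to acquire a linear factor---and check that no spurious families occur. Keeping track, in families (2)(iii) and (2)(iv), of exactly which non-degeneracy hypotheses are forced by Koszulness---the height conditions on $(x,y)$ and on $(a_3x+b_3y,a_4x+b_4y)$, and the transversality of $(a_1x,a_2x)$ and $(b_3y,b_4y)$---while simultaneously controlling the embedded primes of $I$, is the delicate bookkeeping the full proof must carry out; it is here that the hypothesis that $\kk$ is algebraically closed enters, both to factor out linear forms and to invoke normal forms for the relevant pencils of quadrics.
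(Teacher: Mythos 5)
Your stratification by $\hgt I$ and the treatment of heights $1$, $3$, and $4$ follow the paper, and your forward direction for height $2$ correctly begins from the Betti-table classification of Theorem~\ref{Koszul:4:quadric:height:2:Betti:tables}. However, there is a genuine gap in the hardest part, and it is precisely the point that consumes most of Section~\ref{multiplicity:1:case:section} and the entire appendix.

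You treat the passage from each Betti table to an ideal structure as though each table corresponds to a single family up to coordinates, so that the whole burden of the forward direction is ``reconstruction'' and the whole burden of the converse is exhibiting LG-quadraticity. That framing hides the real difficulty: Betti table (iv) is realized by \emph{four} structurally distinct families of ideals (Theorem~\ref{2:linear:syzygies:case}(a)--(d)), and only family (d) (the transversal case) defines a Koszul algebra. Families (a)--(c) have exactly the same Betti table but are \emph{not} Koszul, so they must be explicitly excluded in the forward direction, and this is not something that ``checking LG-quadraticity of the listed families'' can do. For (a) and (b) the paper rules out Koszulness via a nontrivial obstruction on first syzygies \cite[2.8]{Koszul:ACI's}, and for (c) the Betti table and the linear-plus-Koszul syzygy test are both inconclusive, so one has to compute deep into the minimal $R$-resolution of the retract ideal $(a,b)$, using the bigrading and the algebra-retract criterion of \cite[1.4]{HHO} --- that is the content of Appendix~\ref{bad:non-Koszul:algebra}. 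Your plan contains no mechanism for proving non-Koszulness of an ideal whose Betti table is consistent with Koszulness, and without that the forward implication for table (iv) is not established. A secondary (smaller) difference: your proposed reconstruction via primary decomposition and linkage is plausible but vaguer than the paper's, which leans on Schenzel's annihilator lemma for $\Ext$ modules, the explicit minimal free resolution, and $1$-genericity arguments on the matrix of linear syzygies; these specific tools are what let the paper pin down the normal forms, especially for tables (iii) and (iv).
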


We emphasize that Theorem \ref{structure:theorem} applies over an algebraically closed field of arbitrary characteristic.  It agrees with the computer-aided computations of Roos in \cite{quadratic:algebras:in:4:variables} for characteristic zero.  Out of the 83 different types of quadratic algebras in 4 variables identified by Roos, there are only four types of Koszul algebras defined by height-two ideals of four quadrics with different Koszul homology algebra structure (Cases 27, 25, 26, 21 in Table 4).  These cases correspond precisely to our cases (i)--(iv) respectively.  

In addition to the intrinsic interest in having a structure theorem, a large part of our interest in the investigation of Koszul algebras defined by four quadrics owes to Conca's ring $R$ in Example \ref{Koszul:but:not:LG-quadratic}, which points to unexpected complications in trying to answer Question \ref{Betti:number:bound:for:Koszul:algebras} for Koszul algebras defined by $g \geq 5$ quadrics.  It is natural to ask how minimal this example is.  Since $e(R) = 1$ and every height-one ideal of quadrics is easily seen to be G-quadratic, Conca's example is minimal in terms of multiplicity and codimension.  As a consequence of our results, we show that Conca's example is also minimal in terms of the number of quadrics.

\begin{mainthm} \label{LG-quadratic}
If $R = S/I$ is a Koszul algebra such that $I$ is minimally generated by $g \leq 4$ quadrics, then, after passing to the algebraic closure of the ground field,
$R$ is LG-quadratic.
\end{mainthm}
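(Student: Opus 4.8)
The plan is to deduce Theorem~\ref{LG-quadratic} directly from the structure theorem by running through its list of cases. Minimality of the quadratic generating set, Koszulness, and LG-quadraticity are all unaffected by extending the ground field, so after passing to $\bar{\kk}$ we may assume $\kk$ is algebraically closed. When $g\leq 3$, the ideal $I$ is a complete intersection, an almost complete intersection, or a height-one ideal: the first is LG-quadratic by \cite[1.2.5]{Caviglia:PhD:thesis}; a height-one ideal of quadrics minimally generated by $g\geq 2$ quadrics has the form $\ell\cdot(\ell_1,\dots,\ell_g)$ for a linear form $\ell$ and independent linear forms $\ell_1,\dots,\ell_g$ (since $S$ is a UFD and $g\geq 2$ quadrics with no common factor would span an ideal of height $\geq 2$), hence becomes a quadratic monomial ideal after a linear change of coordinates and is $G$-quadratic; and the almost complete intersection case follows from the structure theorems for Koszul almost complete intersections in \cite{Koszul:ACI's}. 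So the substance is the case $g=4$, where I would go through parts~(1)--(4) of Theorem~\ref{structure:theorem} one at a time and show each resulting ring is LG-quadratic --- in fact $G$-quadratic in most cases.

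Several cases I expect to be immediate. Part~(4) is again \cite[1.2.5]{Caviglia:PhD:thesis}. In part~(1), $(a_1x,\dots,a_4x)=x\cdot(a_1,\dots,a_4)$ becomes, after a linear change of coordinates, $(a_1^2,a_1a_2,a_1a_3,a_1a_4)$ or $(xa_1,\dots,xa_4)$ according to whether $x$ lies in the span of the $a_i$, so it is $G$-quadratic; similarly the monomial forms appearing in~(2)(i), such as $(x,y)\cap(z,w)=(xz,xw,yz,yw)$, are $G$-quadratic. For $(x,y)^2+(xz+yw)$ in part~(2)(i) one checks directly that $\{x^2,xy,y^2,xz+yw\}$ is a Gr\"obner basis for the lexicographic order with $x>y>z>w$ --- the $S$-polynomials with $x^2$ and with $xy$ reduce to zero via $xy$ and $y^2$, and the rest vanish by the coprimality criterion --- so this ring is already $G$-quadratic. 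Finally, the two height-three cases of part~(3) are Koszul almost complete intersections, so LG-quadraticity follows from \cite{Koszul:ACI's}.

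The remaining cases are the height-two ones containing a ``generic'' quadric $q$ or a determinantal relation, namely the ideals $(xy,xz,xw,q)$ of~(2)(i)--(ii) and the ideals of~(2)(iii)--(iv). For each of these I would produce a polynomial ring $T$ mapping onto $S$ and an ideal $\tilde I\subseteq T$ that is visibly $G$-quadratic --- because an explicit generating set has a quadratic Gr\"obner basis --- such that $I$ is the image of $\tilde I$ and $\ker(T\twoheadrightarrow S)$ is generated by a $T/\tilde I$-regular sequence of linear forms; then $R=(T/\tilde I)/(\text{linear regular sequence})$ is LG-quadratic by definition. For example, for $(xy,xz,xw,q)$ with $q\in(y,z,w)\setminus(x)$ one may, modulo $(xy,xz,xw)$, replace $q$ by a quadric $y\ell_1+z\ell_2+w\ell_3$ involving no $x$, and lift to $\tilde I=(xy,xz,xw,\,yu_1+zu_2+wu_3)$ in a suitable polynomial extension $T$ of $S$ with $u_i\mapsto\ell_i$; this generating set is a quadratic Gr\"obner basis of $\tilde I$. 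The three remaining cases should be handled the same way, with the $G$-quadratic model in~(2)(iii)--(iv) built from the $2\times 2$ minors of an auxiliary matrix of variables, arranged so that the determinantal hypothesis $\hgt(z,\,a_3x+b_3y,\,a_4x+b_4y,\,a_3b_4-a_4b_3)=3$ and the transversality hypotheses of~(2)(iii)--(iv) are exactly what the next step needs.

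The hard part will be this last step: verifying that $\ker(T\twoheadrightarrow S)$ really is a regular sequence on $T/\tilde I$. This cannot be read off from dimensions, since $\tilde I$ is typically not unmixed --- for instance $(xy,xz,xw,q)=(x,q)\cap(y,z,w)$ has components of heights $2$ and $3$. The plan is to use the short exact sequences furnished by the primary decomposition of $\tilde I$ --- e.g.\ $0\to T/(P_1\cap P_2)\to T/P_1\oplus T/P_2\to T/(P_1+P_2)\to 0$ and their iterates --- together with the vanishing of the relevant $\Tor$ modules, to reduce the regularity statement, component by component, to the quotients of $T$ by the individual associated primes and by their sums. Each such quotient is a polynomial ring or a hypersurface, hence Cohen--Macaulay, so there regularity of a sequence of linear forms is just the statement that it extends to a homogeneous system of parameters --- a condition that unwinds to a transparent computation in which precisely the height and transversality hypotheses of Theorem~\ref{structure:theorem} are used. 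Organizing this bookkeeping uniformly over the four height-two cases is the technical core of the argument; once it is done, assembling all the cases of Theorem~\ref{structure:theorem} shows that $R$ is LG-quadratic, which is Theorem~\ref{LG-quadratic}.
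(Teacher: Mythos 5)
Your outline follows the same broad strategy as the paper: reduce to the algebraic closure, dispatch $\hgt I = g$ via \cite[1.2.5]{Caviglia:PhD:thesis} and $\hgt I = g-1$ via \cite[3.4]{Koszul:ACI's}, observe that height-one ideals become monomial after a linear change of coordinates, and then run through the four height-two forms in Theorem~\ref{structure:theorem}, realizing each as a quotient of a visibly $G$-quadratic model by linear forms that must be shown to be a regular sequence. Where you diverge sharply from the paper is in the step you yourself flag as ``the technical core'': verifying that the specializing linear forms really are regular on the lifted ring $\tilde S/\tilde I$. You propose attacking this through the primary decomposition of $\tilde I$, short exact sequences, and vanishing of $\Tor$, tracking regularity component by component. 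The paper sidesteps all of that with Corollary~\ref{Betti:table:preserved:implies:quotient:by:regular:sequence}: once one knows that $S/I$ and $\tilde S/\tilde I$ have the same graded Betti table (over $S$ and $\tilde S$ respectively) --- which the structure theorems for each case already give --- an immediate Hilbert-series comparison, via the inequality \cite[3.2]{Hilbert:functions:of:graded:algebras}, forces the specializing ideal to be a regular sequence. No primary decomposition or $\Tor$ analysis is needed. This is precisely the mechanism behind Proposition~\ref{transveral:height:1:ideals:are:LG-quadratic} (your case~(2)(iv)), Corollary~\ref{1:linear:syzygy:case:is:LG-quadratic} (case~(2)(iii)), and the cited \cite[3.5]{Koszul:algebras:defined:by:4:quadrics} (cases~(2)(i)--(ii)). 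Your route is not wrong in principle, but the component-by-component bookkeeping across four different non-unmixed lifts would be substantially longer than the one-line numerical criterion, and it is not obvious in advance that all the needed $\Tor$ vanishing holds without in effect reproving the Betti-table identity anyway. One small point in your favor: for the height-one case your direct observation that $\ell\cdot(\ell_1,\dots,\ell_g)$ is always $G$-quadratic (monomial after a change of coordinates, whether or not $\ell$ lies in the span of the $\ell_i$) is slightly cleaner than the paper's detour through the generic case and specialization.
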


Hence, in hindsight, it is no surprise that Question \ref{Betti:number:bound:for:Koszul:algebras} has an affirmative answer for $g \leq 4$.  We do not know of an example of a Koszul algebra which is not LG-quadratic but which becomes LG-quadratic after extending the ground field.

Another question of interest raised in \cite{absolutely:Koszul:algebras} asks whether the absolutely Koszul and Backelin--Roos properties coincide over algebraically closed fields.  As a second application of our main theorem, we determine which Koszul algebras defined by at most four equations have the absolutely Koszul property and prove that, in our setting, the question of Conca et al. has a positive answer (at least if $\ch(\kk) = 0$). 

Finally, we spend a few words on the techniques employed in this paper.  To prove the structure theorem, we exploit a number of considerations about syzygies, linkage, annihilators of certain $\Ext$ modules, and 1-generic matrices. Deformations to ``generic" cases as well as specializations play an important role in every section, including the appendix.  In Theorem \ref{2:linear:syzygies:case}, we prove that there are four homologically distinct types of ideals having Betti table (iv) in {Theorem \ref{Koszul:4:quadric:height:2:Betti:tables}}.  However, three of them do {\em not} define Koszul algebras.  Since these cases are numerically indistinguishable from a Koszul algebra by their Betti table, we show that two of these cases are not Koszul using a recently proved condition on the first syzygy map of a Koszul algebra \cite[2.8]{Koszul:ACI's}.  Unfortunately, proving the remaining case is not Koszul is much more challenging -- indeed, the entire appendix is devoted to it. To achieve this goal, we first deform and specialize to a specific algebra which can be realized as a symmetric algebra of a certain module over a smaller ring.  Exploiting the natural bigrading and invoking a result of \cite{HHO} about algebra retracts then allows us to significantly reduce the size of the computation needed to show that this ring is not Koszul.  Thus, we think that the proofs that these algebras are not Koszul may be of independent interest.

\begin{notation} 
Throughout the remainder of the paper, the following notation will be in force unless specifically stated otherwise.  Let $\kk$ be a fixed algebraically closed ground field of arbitrary characteristic, $S$ be a standard graded polynomial ring over $\kk$,  $I \subseteq S$ be a proper graded ideal, and $R = S/I$.  Recall that the ideal $I$ is called \emph{nondegenerate}\index{nondegenerate} if it does not contain any linear forms.  We can always reduce to a presentation for $R$ with $I$ nondegenerate by killing a basis for the linear forms contained in $I$, and we will assume that this is the case throughout.  We denote the irrelevant ideal of $R$ by $R_+ = \bigoplus_{n \geq 1} R_n$.
\end{notation}

The division of the rest of the paper is as follows.  After reviewing some useful background about LG-quadratic algebras in \S \ref{background}, we complete our classification of Koszul algebras defined by four quadrics in \S \ref{multiplicity:1:case:section}.  Finally, we apply our structure theorems in \S \ref{Backelin-Roos:property:section} to determine which Koszul algebras defined by at most 4 quadrics have the Backelin-Roos property.

\begin{ack}
Evidence for the results in this paper was provided by several computations in Macaulay2 \cite{Macaulay2}. We also thank Aldo Conca for helpful conversations related to this work.
\end{ack}

\section{Background on LG-Quadratic Algebras}
\label{background}

It is useful to know how the Koszul property can be passed to and from quotient rings.

\begin{prop}[{\cite[\S 3.1, 2]{Koszul:algebras:and:regularity}}] \label{passing:Koszulness:to:and:from:quotients}
Let $S$ be a standard graded $\kk$-algebra and $R$ be a quotient ring of $S$.
\begin{enumerate}[label = \textnormal{(\alph*)}]
\item If $S$ is Koszul and $\reg_S(R) \leq 1$, then $R$ is Koszul.
\item If $R$ is Koszul and $\reg_S(R)$ is finite, then $S$ is Koszul.
\end{enumerate}
\end{prop}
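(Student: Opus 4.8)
The plan is to run the Cartan--Eilenberg change-of-rings spectral sequence for the surjection $S\twoheadrightarrow R$: for any graded $R$-module $M$ it has the form
\[
E^2_{p,q}\;=\;\Tor_p^R\!\bigl(M,\,\Tor_q^S(R,\kk)\bigr)\;\Longrightarrow\;\Tor_{p+q}^S(M,\kk),
\]
and is obtained by applying $M\otimes_R(-)$ to an $S$-free resolution of $\kk$ base-changed along $S\to R$. I will only need $M=\kk$, in which case $E^2_{p,q}$ is $\Tor_p^R(\kk,\kk)$ tensored over $\kk$ with the vector space $\Tor_q^S(R,\kk)$. Writing $t_i^A(N)=\max\{\,j:\Tor_i^A(N,\kk)_j\neq0\,\}$, convergence yields the basic estimate
\[
t_i^S(\kk)\;\le\;\max_{p+q=i}\bigl\{\,t_p^R(\kk)+t_q^S(R)\,\bigr\},
\]
because $\Tor_i^S(\kk,\kk)$ is filtered by subquotients of the $E^2_{p,q}$ with $p+q=i$, and $E^2_{p,q}$ sits in internal degrees at most $t_p^R(\kk)+t_q^S(R)$.

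For part (b) this estimate already finishes the argument. If $R$ is Koszul then $t_p^R(\kk)=p$, and $\reg_S R=c<\infty$ gives $t_q^S(R)\le q+c$, so $t_i^S(\kk)\le i+c$ for every $i$; that is, $\reg_S\kk<\infty$. By the theorem of Avramov and Peeva characterizing Koszul algebras as exactly those $A$ with $\reg_A\kk<\infty$, we conclude that $S$ is Koszul. (One could instead attempt to bootstrap the bound $t_i^S(\kk)\le i+c$ directly inside the spectral sequence, but invoking Avramov--Peeva is cleaner, and is the route I would take.)

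For part (a) I would read the same spectral sequence ``backwards.'' Since $S$ is Koszul, $\Tor_n^S(\kk,\kk)$ is concentrated in degree $n$, hence so is every subquotient $E^\infty_{p,q}$ with $p+q=n$; in particular each $E^\infty_{p,0}$ is concentrated in degree $p$. I then prove by induction on $p$ that $\Tor_p^R(\kk,\kk)$ is concentrated in degree $p$ -- which is precisely the Koszul condition for $R$ -- the cases $p=0,1$ being immediate from $\Tor_0^R(\kk,\kk)=\kk$ and $\Tor_1^R(\kk,\kk)=R_1$. For the inductive step, the hypothesis $\reg_S R\le1$ gives $t_q^S(R)\le q+1$ for $q\ge1$ (and $t_0^S(R)=0$), so for each $r\ge2$ the target $E^2_{p-r,\,r-1}$ of the differential $d_r$ leaving $E^r_{p,0}$ has top degree at most $t_{p-r}^R(\kk)+(r-1)+1=(p-r)+r=p$ by the inductive hypothesis. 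Thus every differential out of $E^r_{p,0}$ vanishes in internal degrees $>p$; as nothing maps into the bottom row, $E^\infty_{p,0}$ agrees with $E^2_{p,0}=\Tor_p^R(\kk,\kk)$ in those degrees, forcing $\Tor_p^R(\kk,\kk)_j=0$ for $j>p$. Combined with $\Tor_p^R(\kk,\kk)_j=0$ for $j<p$ (minimality of the $R$-resolution of $\kk$), this closes the induction, so $R$ is Koszul.

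The step I expect to require the most care is the degree bookkeeping in part (a): one must verify that at each stage the nonzero entries in the rows $q\ge1$ of the $E^2$-page, after the shift built into the differential, land in degree $\le p$. This is exactly where $\reg_S R\le1$ is essential and cannot be relaxed -- with $\reg_S R=2$ those rows would contribute classes in degree $p+1$ and the induction would break, consistently with the fact that $S$ Koszul together with $I$ merely quadratic does \emph{not} force $R$ Koszul. One also checks that no higher differential can feed a degree-$>p$ class back into the bottom row $E_{\bullet,0}$, which holds because any such differential would originate in the empty region $q<0$; everything else is formal.
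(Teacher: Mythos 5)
Your proof is correct. The paper does not prove this proposition itself---it cites it to Conca--De Negri--Rossi's survey---so there is no in-paper argument to compare against, but the change-of-rings spectral sequence you set up is the standard mechanism behind both statements. The degree bookkeeping in part (a) is sound: for $r\ge 2$ the target $E^{r}_{p-r,\,r-1}$ lives on the antidiagonal $p-1$, so its top internal degree is bounded by $t_{p-r}^R(\kk)+t_{r-1}^S(R)\le (p-r)+(r-1)+1=p$ by the inductive hypothesis and $\reg_S R\le 1$, and since the bottom row receives no incoming differentials, $E^\infty_{p,0}$ agrees with $\Tor_p^R(\kk,\kk)$ in degrees $>p$ and must vanish there because $\Tor_p^S(\kk,\kk)$ is concentrated in degree $p$. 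For part (b), your bound $t_i^S(\kk)\le i+\reg_S R$ together with the Avramov--Peeva theorem (finite $\reg_S\kk$ implies $S$ Koszul) is a clean and correct route; one could also obtain this from the same spectral sequence without that theorem, but your invocation of it is legitimate and shorter.
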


Here, the \emph{regularity} of $R$ over $S$ is defined by 
\begin{equation} \label{regularity}
\reg_S(R) = \max\{j \mid \beta_{i,i+j}^S(R) \neq 0 \; \text{for some}\; i \}.
\end{equation}
We will primarily use the above proposition to show that the Koszul property can be passed back and forth between a ring and its quotient by a regular sequence consisting of linear forms or quadrics.

The following simple proposition about Hilbert series and its corollary will be useful for determining that various rings are LG-quadratic.

\begin{prop}[{\cite[3.2]{Hilbert:functions:of:graded:algebras}}]
Let $I \subseteq S$ be an ideal and $L = (x_1, \dots x_c) \subseteq S$ be an ideal generated by linear forms.  Then
\[ H_{S/(I, L)}(t) \geq (1-t)^cH_{S/I}(t), \]
with equality if and only if the generators of $L$ form a regular sequence mod $I$.
\end{prop}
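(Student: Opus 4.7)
The plan is to prove this by induction on $c$, using the standard fact that multiplication by a linear form fits into a four-term exact sequence whose alternating sum of Hilbert series vanishes.

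For the base case $c = 1$, write $x = x_1$ and consider the exact sequence
\[ 0 \to \bigl(0 :_{S/I} x\bigr)(-1) \to (S/I)(-1) \xrightarrow{\cdot x} S/I \to S/(I, x) \to 0. \]
Taking alternating sums of Hilbert series yields
\[ H_{S/(I,x)}(t) = (1-t) H_{S/I}(t) + t \cdot H_{0 :_{S/I} x}(t). \]
Since $H_{0 :_{S/I} x}(t)$ has nonnegative coefficients, we obtain $H_{S/(I,x)}(t) \geq (1-t) H_{S/I}(t)$ coefficient-wise, with equality if and only if $0 :_{S/I} x = 0$, i.e., $x$ is a nonzerodivisor modulo $I$.

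For the inductive step, set $I' = I + (x_1, \dots, x_{c-1})$. By the induction hypothesis applied to $I$ and the linear forms $x_1, \dots, x_{c-1}$, we have
\[ H_{S/I'}(t) \geq (1-t)^{c-1} H_{S/I}(t), \]
with equality if and only if $x_1, \dots, x_{c-1}$ is a regular sequence on $S/I$. Applying the base case to $I'$ and the single linear form $x_c$ gives
\[ H_{S/(I,L)}(t) = H_{S/(I', x_c)}(t) \geq (1-t) H_{S/I'}(t) \geq (1-t)^c H_{S/I}(t), \]
with equality in the first step if and only if $x_c$ is a nonzerodivisor modulo $I'$. Chaining these, equality holds in the overall inequality precisely when $x_1, \dots, x_{c-1}$ is regular on $S/I$ and $x_c$ is regular on $S/I'$, which is the definition of $x_1, \dots, x_c$ forming a regular sequence modulo $I$.

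There is no real obstacle here; the only mild subtlety is book-keeping the equality condition through the induction, which is handled by observing that the inequalities are coefficient-wise comparisons, so equality in a sum forces equality in each summand. (Permutability of regular sequences in the graded setting is not even needed, since the definition already fixes the order $x_1, \dots, x_c$.)
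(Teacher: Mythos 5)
Your base case $c=1$ is correct and is exactly Stanley's cited result. The inductive step, however, has a genuine gap. To pass from the inductive hypothesis $H_{S/I'}(t) \geq (1-t)^{c-1}H_{S/I}(t)$ to the claimed $(1-t)H_{S/I'}(t) \geq (1-t)^{c}H_{S/I}(t)$, you multiply a coefficient-wise inequality of power series by $(1-t)$, which has a negative coefficient and therefore does not preserve the direction of the inequality. Concretely, unrolling your two identities gives $H_{S/(I,L)}(t) - (1-t)^c H_{S/I}(t) = (1-t)P(t) + Q(t)$ where $P$ and $Q$ are the nonnegative error terms from the two applications of the base case, and $(1-t)P(t)$ need not have nonnegative coefficients. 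For the same reason your remark that ``equality in a sum forces equality in each summand'' does not apply here, since the summands are not all nonnegative.

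In fact the inequality asserted in the proposition fails for $c \geq 2$ as stated: with $S = \kk[x,y]$, $I = (xy,y^2)$, $L = (x,y)$, one has $H_{S/(I,L)}(t) = 1$ while $(1-t)^2 H_{S/I}(t) = 1 - 2t^2 + t^3$, so the difference $2t^2 - t^3$ has a negative coefficient at $t^3$. (Stanley's Theorem 3.2 is precisely the $c=1$ case you proved; the iterated inequality needs an extra hypothesis such as $x_1,\dots,x_c$ being part of a homogeneous system of parameters.) What is correct, and all the paper actually uses, is the equality characterization, which you can salvage from your identity. Iterating the $c=1$ case gives
\[
H_{S/(I,L)}(t) - (1-t)^c H_{S/I}(t) = \sum_{i=1}^{c} (1-t)^{c-i}\, t\, H_{0:_{S/I_{i-1}}x_i}(t), \qquad I_i = I + (x_1,\dots,x_i).
\]
If some colon module is nonzero, let $d$ be the minimal degree occurring among the nonzero series $H_{0:_{S/I_{i-1}}x_i}(t)$; since each factor $(1-t)^{c-i}$ has constant term $1$, the coefficient of $t^{d+1}$ on the right-hand side is a strictly positive sum, so the right-hand side cannot vanish. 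Hence equality of Hilbert series forces every $0:_{S/I_{i-1}}x_i$ to vanish, i.e., $x_1,\dots,x_c$ is a regular sequence modulo $I$; the converse direction is immediate from iterating the base case with all error terms equal to zero.
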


\begin{cor} \label{Betti:table:preserved:implies:quotient:by:regular:sequence}
Let $I \subseteq S$ be an ideal and $L = (x_1, \dots x_c) \subseteq S$ be an ideal generated by linear forms.  If the graded Betti numbers of $S/I$ over $S$ are the same as the graded Betti numbers of $S/(I, L)$ over $S/L$, then $L$ is generated by a regular sequence mod $I$.
\end{cor}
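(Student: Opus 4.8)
The plan is to compare Hilbert series on the two sides and reduce to the equality clause of the preceding proposition. Throughout I would take $x_1,\dots,x_c$ to be linearly independent, i.e.\ a minimal generating set of $L$; this is harmless in the applications (and genuinely needed: for $S=\kk[x]$, $I=0$, $L=(x,x)$ the graded Betti numbers of $S/I$ over $S$ and of $S/(I,L)$ over $S/L=\kk$ agree, yet there is no length-$2$ regular sequence). After a linear change of coordinates we may then assume $S=\kk[x_1,\dots,x_n]$ with $L=(x_1,\dots,x_c)$, so that $\bar S:=S/L\cong\kk[x_{c+1},\dots,x_n]$ is again a standard graded polynomial ring, now in $n-c$ variables.

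Next I would invoke the standard fact that the Hilbert series of a finitely generated graded module $M$ over a standard graded polynomial ring $P$ in $N$ variables is computed from its minimal graded free resolution by
\[ H_M(t)=\frac{\sum_{i,j}(-1)^i\beta^P_{i,j}(M)\,t^j}{(1-t)^N}. \]
Applying this with $M=S/I$, $P=S$, $N=n$ and again with $M=S/(I,L)$, $P=\bar S$, $N=n-c$ — and using that the Hilbert series of $S/(I,L)$ is intrinsic to it as a graded vector space, hence the same whether computed over $S$ or over $\bar S$ — the hypothesis that the two families of graded Betti numbers coincide says precisely that the two numerator polynomials are equal. Cancelling, we obtain $(1-t)^{n-c}H_{S/(I,L)}(t)=(1-t)^{n}H_{S/I}(t)$, that is,
\[ H_{S/(I,L)}(t)=(1-t)^{c}\,H_{S/I}(t). \]
This is exactly the equality case of the preceding proposition applied to $I$ and $L$, whose conclusion then gives that $x_1,\dots,x_c$ form a regular sequence modulo $I$, as desired.

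Since every step is formal, there is no real obstacle here; the only point requiring care is the reduction in the first paragraph — that $\bar S$ is a polynomial ring in exactly $n-c$ variables — which is precisely why one must insist that the given linear forms be linearly independent.
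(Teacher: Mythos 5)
Your proof is correct and takes essentially the same route as the paper's: both compute the Hilbert series of $S/I$ and $S/(I,L)$ from the graded Betti number formula $H_M(t)=\sum_{i,j}(-1)^i\beta_{i,j}t^j/(1-t)^N$, observe that equality of the numerator polynomials together with $\dim S/L = \dim S - c$ gives $H_{S/(I,L)}(t)=(1-t)^cH_{S/I}(t)$, and then invoke the equality case of the preceding proposition of Stanley. Your remark that $x_1,\dots,x_c$ must be taken linearly independent is a fair point that the paper leaves implicit (the paper's identity $(1-t)^c/(1-t)^{\dim S}=1/(1-t)^{\dim S/L}$ tacitly uses $\dim S/L=\dim S-c$), but this assumption is automatic in all of the paper's applications.
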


\begin{proof}
Since the graded Betti numbers of $S/I$ determine the numerator $P(t)$ of its Hilbert series, we see that $(1-t)^cH_{S/I}(t) = (1-t)^c\frac{P(t)}{(1-t)^{\dim S}} = \frac{P(t)}{(1-t)^{\dim S/L}} = H_{S/(I, L)}(t)$ so the conclusion follows from the preceding proposition.
\end{proof}

As a particular application of this fact, we have the following proposition.  Recall that two ideals $I_1, I_2 \subseteq S$ in a standard graded polynomial ring over $\kk$ are called \emph{transversal} if $I_1 \cap I_2 = I_1I_2$.  This is easily seen to be equivalent to having $\Tor_1^S(S/I_1, S/I_2) = 0$ and, hence, $\Tor_i^S(S/I_1, S/I_2) = 0$ for all $i \geq 1$ by the rigidity of Tor over regular rings \cite[2.1]{rigidity:of:Tor}.

\begin{prop} \label{transveral:height:1:ideals:are:LG-quadratic}
Suppose that $R = S/I$ where $I = I_1 + I_2$ for transversal height one ideals $I_1$ and $I_2$ generated by quadrics.  Then $R$ is LG-quadratic.
\end{prop}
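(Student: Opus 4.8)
The plan is to realize $R$ as the quotient of a G-quadratic algebra by a regular sequence of linear forms, obtained by pulling $I_1$ and $I_2$ apart into disjoint sets of variables. Write $S = \kk[x_1, \dots, x_N]$, introduce a second polynomial ring $S' = \kk[x_1', \dots, x_N']$, and set $\tilde S = S \otimes_\kk S' = \kk[x_1, \dots, x_N, x_1', \dots, x_N']$. Let $I_2' \subseteq S'$ be the image of $I_2$ under $x_i \mapsto x_i'$, and put
\[ A \;=\; \tilde S/(I_1\tilde S + I_2'\tilde S) \;\cong\; (S/I_1)\otimes_\kk (S'/I_2'). \]
The linear forms $\ell_i = x_i - x_i' \in \tilde S$ satisfy $\tilde S/(\ell_1, \dots, \ell_N) \cong S$, and under this isomorphism $I_1\tilde S + I_2'\tilde S$ maps onto $I_1 + I_2 = I$; hence $R \cong A/(\bar\ell_1, \dots, \bar\ell_N)$, where $\bar\ell_i$ denotes the image of $\ell_i$ in $A$.

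First I would check that $\bar\ell_1, \dots, \bar\ell_N$ is an $A$-sequence, using the Hilbert-series criterion. Since $I_1$ and $I_2'$ involve disjoint variables, $H_A(t) = H_{S/I_1}(t)\,H_{S/I_2}(t)$; writing $P_j(t)$ for the numerator of $H_{S/I_j}(t)$, this says $H_A(t) = P_1(t)P_2(t)/(1-t)^{2N}$. On the other hand, transversality gives $\Tor^S_i(S/I_1, S/I_2) = 0$ for all $i \geq 1$, so the tensor product of the minimal free resolutions of $S/I_1$ and $S/I_2$ over $S$ is a minimal free resolution of $R$; hence the numerator of $H_R(t)$ is $P_1(t)P_2(t)$, i.e. $H_R(t) = P_1(t)P_2(t)/(1-t)^{N} = (1-t)^N H_A(t)$. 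Because $H_{A/(\bar\ell_1, \dots, \bar\ell_N)}(t) = H_R(t)$, the Hilbert-series proposition stated above (applied with $I_1\tilde S + I_2'\tilde S$ in the role of $I$ and $(\ell_1, \dots, \ell_N)$ in the role of $L$) forces $\ell_1, \dots, \ell_N$ to be a regular sequence modulo $I_1\tilde S + I_2'\tilde S$. Equivalently, since transversality identifies the graded Betti numbers of $A$ over $\tilde S$ with those of $R = A/(\bar\ell_1,\dots,\bar\ell_N)$ over $\tilde S/(\ell_1,\dots,\ell_N) = S$, one may simply invoke Corollary~\ref{Betti:table:preserved:implies:quotient:by:regular:sequence}.

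It remains to show that $A$ is G-quadratic. Each $I_j$ is, after a linear change of coordinates, either generated by a single quadric or a quadratic monomial ideal: by unique factorization the greatest common divisor $d$ of the quadric generators of $I_j$ has degree $1$ or $2$ (degree $0$ would force $\hgt I_j \geq 2$), so $I_j$ is either principal of the form $(d)$ with $d$ a quadric, or of the form $d\cdot(\ell_1, \dots, \ell_m)$ with $d, \ell_1, \dots, \ell_m$ linear, and in the latter case a suitable choice of coordinates makes all generators quadratic monomials (whether or not $d$ lies in the span of the $\ell_t$). Performing such changes $\phi_1$ on $S$ and $\phi_2$ on $S'$ and extending to $\Phi = \phi_1 \otimes \phi_2$ on $\tilde S$, the ideal $\Phi(I_1\tilde S + I_2'\tilde S)$ has a generating set consisting of quadratic monomials in the $x_i$, quadratic monomials in the $x_i'$, and at most one extra quadric from each factor; this generating set is a Gr\"obner basis, since $S$-polynomials among monomials reduce to zero and any $S$-polynomial pairing a generator in the $x_i$ with one in the $x_i'$ has coprime leading terms, so it reduces to zero by Buchberger's criterion. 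Hence $A$ is G-quadratic, and $R \cong A/(\bar\ell_1, \dots, \bar\ell_N)$ is a quotient of a G-quadratic algebra by an $A$-sequence of linear forms, i.e. LG-quadratic.

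Apart from this, the verification is essentially formal; the one place transversality is genuinely needed is the Hilbert-series comparison in the second step, which is exactly what upgrades the $\ell_i$ from linear forms cutting the Krull dimension by the right amount to an honest regular sequence on $A$, and without which the $\ell_i$ could fail to be regular.
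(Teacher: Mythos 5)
Your proof is correct, and it proceeds by a genuinely different deformation than the paper's. The paper's proof first writes $I_1 = x(a_1, \dots, a_s)$ and $I_2 = y(b_1, \dots, b_r)$ for linear forms $x, y, a_i, b_j$, passes to the generic quadratic monomial ideal $\tilde I = (u_1 z, \dots, u_s z, v_1 w, \dots, v_r w)$ in $\tilde S = S[z, w, u_i, v_j]$ (visibly G-quadratic), and uses Corollary~\ref{Betti:table:preserved:implies:quotient:by:regular:sequence} to show the linear forms $z-x$, $w-y$, $u_i - a_i$, $v_j - b_j$ form a regular sequence specializing $\tilde S/\tilde I$ to $R$. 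Your deformation instead doubles the entire variable set, forming $A = (S/I_1)\otimes_\kk (S'/I_2')$ and cutting by the diagonal linear forms $x_i - x_i'$; the regularity check is the same Hilbert-series/Betti-number comparison, and both arguments exploit transversality at exactly the same point, namely to identify the numerator of $H_R(t)$ with the product of the factor numerators via the tensor-product resolution. The extra step in your route is verifying that $A$ is itself G-quadratic, which you do by extracting the gcd of the generators of each $I_j$.

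That gcd analysis is in fact a small improvement over the paper's argument: the paper's opening assertion that $I_1 = (a_1 x, \dots, a_s x)$ tacitly assumes the gcd of the generators is a linear form, which fails when $I_1$ is principal and generated by an irreducible quadric such as $x_1^2 + x_2x_3$. Your Gr\"obner basis argument covers that case too (the lone irreducible quadric pairs only against generators in the disjoint primed variables, so Buchberger's criterion applies with coprime leading terms), so your proof actually establishes the proposition in the full generality in which it is stated. This gap in the paper's proof is harmless for the paper's purposes, since in the only application (case (iv) of Theorem~\ref{structure:theorem}) the ideals $I_1, I_2$ are always of the explicit split form $(a_1x, a_2x)$; but it is worth noting that you closed it. The only cost of your approach is a larger auxiliary ring, with $2N$ variables rather than $N + s + r + 2$, which is immaterial for the conclusion.
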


\begin{proof}
We note that $I_1 = (a_1x, \dots, a_sx)$ for some linear form $x$ and independent linear forms $a_i$.  Likewise, $I_2 = (b_1y, \dots, b_ry)$ for some linear form $y$ and independent linear forms $b_j$.  The minimal free resolutions of $S/I_1$ and $S/I_2$  are just the Koszul complexes of $a_1, \dots, a_s$ and $b_1, \dots, b_r$ respectively, except that the first differential is multiplied by $x$ or $y$ accordingly.  Furthermore, since $I_1$ and $I_2$ are transversal, the minimal free resolution of $R$ is just the tensor product of these resolutions.  Thus, the graded Betti numbers of $R$ are uniquely determined by the fact that $I_1$ and $I_2$ are transversal height one ideals of quadrics.

In the case, where $x, y, a_1, \dots, a_s, b_1, \dots, b_r$ are independent linear forms, $R$ is clearly G-quadratic, since after a suitable linear change of coordinates we may assume that the given linear forms are variables, and hence, $I$ is a monomial ideal.  In general, we can consider the G-quadratic algebra $A = \tilde{S}/\tilde{I}$ where $\tilde{S} = S[z, w, u_1, \dots, u_s, v_1, \dots, v_r]$ and $\tilde{I} = (u_1z, \dots, u_sz, v_1w, \dots, v_rw)$.  If $L = (u_i - a_i, v_j - b_j, z - x, w - y \mid 1 \leq i \leq r, 1 \leq j \leq s)$ denotes the ideal of linear forms specializing $\tilde{I}$ to $I$, we see that the graded Betti numbers of $\tilde{S}/(\tilde{I}, L) \iso S/I$ over $\tilde{S}/L \iso S$ are the same as those of $\tilde{S}/\tilde{I}$ over $\tilde{S}$.  Thus, $L$ is generated by a regular sequence mod $I$ by preceding corollary, and $R$ is LG-quadratic.
\end{proof}

\section{The Structure Theorem}
\label{multiplicity:1:case:section}

Most of this section is devoted to proving the following structure theorem.

\begin{thm}\label{thm:main}
Let $I \subseteq S$ be an ideal of height 2 generated by $g = 4$ quadrics. Then $R = S/I$ is Koszul if and only if $I$ has one of the following forms:
\begin{enumerate}[label = \textnormal{(\roman*)}]
\item $(x, y) \cap (z, w)$ or $(x, y)^2 + (xz + yw)$ for independent linear forms $x$, $y$, $z$ and $w$, or $(xy, xz, xw, q)$ for independent linear forms $y$, $z$, and $w$ and some linear form $x$ and quadric $q \in (y, z, w) \setminus (x)$;
\item $(xy, xz, xw, q)$ for independent linear forms $y$, $z$, and $w$ and some linear form $x$ and quadric $q$ which is a nonzerodivisor modulo $(xy, xz, xw)$;
\item $(xz, yz, a_3x+b_3y, a_4x+b_4y)$ for some linear forms $x$, $y$, $z$, $a_3$, $a_4$, $b_3$, and $b_4$ such that $\hgt (x, y) = \hgt (a_3x+b_3y, a_4x+b_4y) = 2$ and $\hgt (z, a_3x+b_3y, a_4x+b_4y, a_3b_4-a_4b_3) = 3$;
\item $(a_1x, a_2x, b_3y, b_4y)$ for some linear forms $x$, $y$, $a_i$, $b_i$ such that $(a_1x,a_2x)$ and $(b_3y, b_4y)$ are transversal ideals with $\hgt(a_1, a_2) = \hgt(b_3, b_4) = 2$.
\end{enumerate}
\end{thm}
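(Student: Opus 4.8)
The plan is to prove both implications of Theorem~\ref{thm:main} by organizing the argument around the four Betti tables of Theorem~\ref{Koszul:4:quadric:height:2:Betti:tables}: by that theorem, a Koszul $R=S/I$ with $\hgt I=2$ and $I$ minimally generated by four quadrics has one of those four tables. For the ``if'' direction I would show that each of the four forms in Theorem~\ref{thm:main} defines an LG-quadratic --- hence Koszul, via Proposition~\ref{passing:Koszulness:to:and:from:quotients} --- algebra. Case~(iv) is exactly the setting of Proposition~\ref{transveral:height:1:ideals:are:LG-quadratic}. For the remaining cases one realizes $R=\tilde S/(\tilde I,L)$ for a G-quadratic ideal $\tilde I$ in a larger polynomial ring $\tilde S$ and a sequence $L$ of linear forms, and verifies that $L$ is regular modulo $\tilde I$ by checking, via Corollary~\ref{Betti:table:preserved:implies:quotient:by:regular:sequence} and the proposition preceding it, that $(1-t)^{|L|}H_{\tilde S/\tilde I}(t)=H_R(t)$. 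In case~(i), after a linear change of coordinates, $(x,y)\cap(z,w)$ and the ideals $(xy,xz,xw,q)$ with $q\in(y,z,w)\setminus(x)$ of rank $\le 2$ (viewing $q$ as a quadric in three variables modulo $(xy,xz,xw)$) are quadratic monomial ideals, while $(x,y)^2+(xz+yw)$ and the rank-$3$ case of the latter have quadratic Gr\"obner bases in suitable lex orders (with initial ideals $(x^2,xy,y^2,xz)$ and $(xy,xz,xw,yz)$), so there $\tilde I=I$ and $L$ is empty. In case~(ii), normalize $I=x(y,z,w)+(x^2+q'')$ with $0\neq q''\in\kk[y,z,w]_2$ and take $\tilde I=(xy,xz,xw,\,xx_4+\tilde q'')$ with $x_4$ and the variables of $\tilde q''$ fresh; in a lex order making the leading monomial of the last generator disjoint from $x$, the only non-coprime pairs of leading monomials lie among $xy,xz,xw$, so Buchberger's criterion gives a quadratic Gr\"obner basis and the required Hilbert-series identity is a short computation. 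Case~(iii) is handled by an analogous deformation of $(xz,yz,a_3x+b_3y,a_4x+b_4y)$ to a G-quadratic model.

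For the ``only if'' direction, assume $R$ is Koszul, so its Betti table over $S$ is one of (i)--(iv) in Theorem~\ref{Koszul:4:quadric:height:2:Betti:tables}, and I would recover the structure of $I$ table by table, using $\beta^S_{2,3}(R)$, the number of linear first syzygies, as the organizing invariant. When $\beta^S_{2,3}(R)=3$ (tables (ii) and (iii)), the linear syzygies force $I$ to contain either $\ell\cdot V$ for a linear form $\ell$ and a $3$-dimensional space $V$ of linear forms (three Koszul-type linear syzygies) or $\ell(x,y)$ together with two further generators lying in $(x,y)$ (one plus two linear syzygies); combined with the remaining rows of the Betti table, the first alternative gives $I=(xy,xz,xw,q)$ and the second $I=(xz,yz,a_3x+b_3y,a_4x+b_4y)$, and the height and minimal-generation constraints, read against the precise Betti table, pin down the stated conditions --- in particular $q$ must be a nonzerodivisor modulo $(xy,xz,xw)$, since a zerodivisor would force $\hgt I=1$ or Betti table (i). When $\beta^S_{2,3}(R)=4$ (table (i), a linear resolution), $\Ext^3_S(R,S)$ is a cyclic module whose presentation matrix consists of the linear entries of the last differential, so its support is a linear subspace contained in $\mathrm{Supp}(R)$ and hence of codimension $\ge 3$; this, together with the structure of $\Ext^2_S(R,S)$, the $1$-genericity of the linear syzygy matrix, and a short linkage argument, separates the three possibilities $(x,y)\cap(z,w)$, $(x,y)^2+(xz+yw)$, and $(xy,xz,xw,q)$ with $q\in(y,z,w)\setminus(x)$.

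The main obstacle is the case $\beta^S_{2,3}(R)=2$ (table (iv)), treated separately in Theorem~\ref{2:linear:syzygies:case}. Here one first shows, via syzygy and deformation arguments, that up to the relevant equivalence there are exactly four ideals with this Betti table; three of them are numerically indistinguishable from Koszul algebras, so one must prove directly that they are not Koszul. Two of the three are eliminated by the first-syzygy-map obstruction of \cite[2.8]{Koszul:ACI's}. The remaining one resists this and occupies the entire appendix: one deforms and specializes to a specific algebra that can be presented as the symmetric algebra of a module over a smaller ring, then exploits the induced bigrading together with the algebra-retract criterion of \cite{HHO} to cut the Koszulness computation down to a feasible size. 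I expect this last non-Koszulness argument to be by far the hardest step; the per-table case analysis for (i)--(iii) and the explicit LG-quadratic deformations in the ``if'' direction, though technical, should be comparatively routine.
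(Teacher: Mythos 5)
Your outline mirrors the paper's proof of Theorem~\ref{thm:main} in all its major moves: reduce to the four Betti tables of Theorem~\ref{Koszul:4:quadric:height:2:Betti:tables}, recover the structure of $I$ table by table, and establish LG-quadraticity of each listed form for the converse by deforming to a generic G-quadratic model and descending by a regular sequence of linear forms (exactly the mechanism of Corollary~\ref{Betti:table:preserved:implies:quotient:by:regular:sequence}). You also correctly isolate the hard part of table~(iv): the first-syzygy obstruction of \cite[2.8]{Koszul:ACI's} rules out two of the three non-transversal shapes, and the remaining case needs the symmetric-algebra/retract/bigrading argument of the appendix.

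The one genuine gap is in how you recover the shape of $I$ from Betti table~(iii). You assert that the three linear syzygies ``force $I$ to contain $\ell(x,y)$ together with two further generators lying in $(x,y)$,'' but knowing $I \subseteq P=(x,y)$ (from $e(R)=1$) and $\beta^S_{2,3}(R)=3$ is not by itself enough to conclude that two of the generators factor as $xz$ and $yz$ for a \emph{common} linear form $z$: a priori there are many configurations of a $4\times 3$ matrix of linear syzygies consistent with $I\subseteq P$. The paper's Theorem~\ref{1:linear:syzygy:case} produces those two reducible generators via the duality/annihilator machinery of Lemma~\ref{3.2} (Schenzel: $\a_0\cdots\a_d\subseteq I$ where $\a_i=\Ann_S\Ext^i_S(R,S)$) and Proposition~\ref{annihilators:of:cohomology} ($\hgt\a_i\geq i$, $\a_c=I^{\unmixed}$): one identifies $\a_2=P$ as the unmixed part, shows $\a_3=I_1(\phi_3)$ contains a nonzero linear form $z$ (because a height-three $\a_3$ cut out by three quadrics would contradict the existence of the linear syzygies on $I$), and then $Pz\subseteq \a_2\a_3\subseteq I$ forces $xz,yz\in I$. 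You invoke $\Ext^3$ for table~(i) but not here; without this input, the word ``force'' in your sketch is unproved, and your plan would need either to call on this annihilator argument or to substitute a correct syzygy-level derivation of the same conclusion.
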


We summarize the proof below.

\begin{proof}
($\Longrightarrow$): If $R$ is Koszul, then $R$ must have one of the Betti tables listed in Theorem \ref{Koszul:4:quadric:height:2:Betti:tables}. Tables (i) and (ii) correspond precisely to cases (i) and (ii) respectively by \cite[3.1, 3.3]{Koszul:algebras:defined:by:4:quadrics}.  Case (iii) follows by Theorem \ref{1:linear:syzygy:case}. Case (iv) follows by Theorem \ref{2:linear:syzygies:case} and Lemma \ref{non-Koszul:2:linear:syzygies:case}.

($\Longleftarrow$): If $I$ has form (i) or (ii), then $I$ is G-quadratic by \cite[3.5]{Koszul:algebras:defined:by:4:quadrics}. If $I$ has form (iii) or (iv), then $I$ is LG-quadratic by Corollary \ref{1:linear:syzygy:case:is:LG-quadratic} and Proposition \ref{transveral:height:1:ideals:are:LG-quadratic} respectively.
\end{proof}

As noted above, it remains only to prove structure theorems for the ideals realizing Betti tables (iii) and (iv) in Theorem \ref{Koszul:4:quadric:height:2:Betti:tables}, both of which correspond to ideals of multiplicity one.  To aid in the analysis of Betti table (iii), we make use of the following result, which is credited to Eisenbud and Evans in \cite{direct:methods:for:primary:decomposition} but for which we were unable to find an explicit reference.  Luckily, it is also essentially a result of Schenzel on the annihilators of local cohomology. 

\begin{lemma}\label{3.2}
If $R = S/I$ is a standard graded $\kk$-algebra with $\dim S = d$ and we set $\a_i = \Ann_S \Ext^i_S(R, S)$ for each $i$, then $\a_0 \cdots \a_d \subseteq I$. 
\end{lemma}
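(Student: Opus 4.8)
The plan is to establish the inclusion by induction on $d=\dim S$, working more generally for $S$ a regular local ring or a standard graded polynomial ring over a field. Since $I$ and every $\a_i=\Ann_S\Ext^i_S(R,S)$ are graded ideals, the containment $\a_0\cdots\a_d\subseteq I$ may be tested after localizing at the homogeneous maximal ideal, so one reduces immediately to the case where $(S,\mathfrak{m})$ is a regular local ring of dimension $d$. The base case $d=0$ is trivial, as then $S$ is a field, $I=0$, and $\a_0=\Ann_S\operatorname{Hom}_S(S,S)=0=I$. For $d\ge 1$, fix an irredundant primary decomposition $I=Q_1\cap\cdots\cap Q_m$ with $P_j=\sqrt{Q_j}$, so that $\ass_S(R)=\{P_1,\dots,P_m\}$.

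The key point is that $\mathfrak{m}$ is the only prime of height $d$, so each $P_j\ne\mathfrak{m}$ has height $c_j<d$. Localizing at such a $P_j$ gives a regular local ring $S_{P_j}$ of dimension $c_j$, and the induction hypothesis applied over $S_{P_j}$ yields $\prod_{i=0}^{c_j}\Ann_{S_{P_j}}\Ext^i_{S_{P_j}}(R_{P_j},S_{P_j})\subseteq I_{P_j}$. Because $\Ann$ and $\Ext$ commute with localization and $\Ext^i_{S_{P_j}}(R_{P_j},S_{P_j})=0$ for $i>c_j$, this reads $\bigl(\prod_{i=0}^{d-1}\a_i\bigr)_{P_j}\subseteq I_{P_j}$, and hence $\prod_{i=0}^{d-1}\a_i\subseteq I_{P_j}\cap S\subseteq Q_j$. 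Intersecting over all $j$ with $P_j\ne\mathfrak{m}$ produces $\prod_{i=0}^{d-1}\a_i\subseteq J:=(I:_S\mathfrak{m}^\infty)$, the intersection of those primary components of $I$ whose radical is not $\mathfrak{m}$. If $\mathfrak{m}\notin\ass_S(R)$ then $J=I$, so $\prod_{i=0}^{d-1}\a_i\subseteq I$ and we are done without even using $\a_d$.

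Suppose instead $\mathfrak{m}\in\ass_S(R)$, and set $N:=J/I$. This is a nonzero module of finite length, since a power of $\mathfrak{m}$ annihilates it, and $S/J$ has positive depth, so $\pd_S(S/J)<d$. From the exact sequence $0\to N\to R\to S/J\to 0$ and the vanishing $\Ext^i_S(S/J,S)=0$ for $i\ge d$, we obtain $\Ext^d_S(R,S)\cong\Ext^d_S(N,S)$. As $N$ is perfect of grade $d$, the functor $\Ext^d_S(-,S)$ carries it to a finite-length module with the same annihilator, namely its Matlis dual; hence $\a_d=\Ann_S\Ext^d_S(R,S)=\Ann_S N=(I:_S J)$. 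Therefore $\prod_{i=0}^{d}\a_i=\a_d\cdot\prod_{i=0}^{d-1}\a_i\subseteq(I:_S J)\cdot J\subseteq I$, which completes the induction.

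I expect the only real difficulty to be the embedded $\mathfrak{m}$-primary component of $I$: it cannot be seen by localizing at an associated prime, which is exactly why the induction must be on $\dim S$ and not on $\dim R$, and it has to be dispatched by the separate identification $\a_d=\Ann_S(J/I)$ together with the tautology $(I:_S J)\cdot J\subseteq I$. Everything else is routine manipulation of primary decompositions and flat base change for $\Ext$. One could instead bypass the hands-on argument altogether: graded local duality over the polynomial ring $S$ gives $\a_i=\Ann_S H^{d-i}_{\mathfrak{m}}(R)$, and the desired inclusion is then exactly Schenzel's theorem on the annihilators of local cohomology, which may simply be cited.
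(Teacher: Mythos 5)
Your argument is correct, but it takes a genuinely different route from the paper. The paper's proof is a two-line citation: it uses graded local duality to rewrite $\a_i = \Ann_S H^{d-i}_{R_+}(R)$, reduces to the local case by the same observation you make (a graded ideal is zero iff its localization at the irrelevant maximal ideal is zero), and then invokes Schenzel's theorem \cite[Prop.~3]{dualizing:complexes:and:sop's} verbatim. Your main argument, by contrast, is a self-contained reproof of the relevant special case of Schenzel's theorem by induction on $\dim S$. The key mechanism — localizing at an associated prime $P_j \neq \mathfrak{m}$ of height $c_j < d$, using flat base change for $\Ext$ and annihilators to push the inductive hypothesis into $S_{P_j}$ (where $\Ext^i$ vanishes for $i > c_j$, so the factors $\a_i$ for $c_j < i \le d-1$ contribute nothing after localization), contracting to deduce $\prod_{i=0}^{d-1}\a_i \subseteq Q_j$, intersecting to land in the saturation $J$, and then handling a possible embedded $\mathfrak{m}$-primary component via the identification $\a_d = \Ann_S(J/I) = (I :_S J)$ through Matlis duality for the finite-length perfect module $J/I$ — is sound throughout. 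What the paper's approach buys is brevity and generality (Schenzel's result holds over any ring with a dualizing complex); what your approach buys is a transparent, elementary proof that shows exactly where each factor $\a_i$ is used, and in particular isolates the role of $\a_d$ in absorbing the embedded primary component, which the citation obscures. You also correctly anticipate the shortcut the paper actually takes, in your closing paragraph.
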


\begin{proof}
By Grothendieck duality, we know that $\Ext^i_S(R, S)$ is Matlis dual to $H^{d-i}_{R_+}(R)$ so that $\a_i = \Ann_S H^{d-i}_{R_+}(R)$ for every $i$.  Setting $\bar{\a}_i = \a_i/I$ for each $i$, it follows from \cite[Prop 3]{dualizing:complexes:and:sop's} that $\bar{\a}_0 \cdots \bar{\a}_d = 0$.  Although Schenzel's result is stated in the local case, we can reduce to the local case since the ideal $\bar{\a}_0 \cdots \bar{\a}_d$ is graded and, therefore, it is zero if and only if its localization at the irrelevant ideal is zero.  Hence, we must have $\a_0 \cdots \a_d \subseteq I$ as wanted.
\end{proof}

We will also need the following; the interested reader should consult the original version for a more general statement. We recall that the \emph{unmixed part} of an ideal $I$ of height $c$ is the intersection of all the components of height $c$ appearing in an irredundant primary decomposition of $I$ and is denoted by $I^{\unmixed}$.

\begin{prop}[{\cite[1.1]{direct:methods:for:primary:decomposition}}] \label{annihilators:of:cohomology}
Let $R = S/I$ be a standard graded $\kk$-algebra with $\hgt I = c$, and set $\a_i = \Ann_S \Ext^i_S(R, S)$ for each $i$.  Then $\hgt \a_i \geq i$ for all $i$, and $\a_c = I^{\unmixed}$.
\end{prop}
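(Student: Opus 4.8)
The plan is to treat the two assertions separately, in both cases by passing to the localizations of $S$. For the bound $\hgt \a_i \geq i$, I would show that $\Ext^i_S(R,S)_{\mathfrak p} = 0$ whenever $\hgt \mathfrak p < i$; since $\Ext$ commutes with localization and $\operatorname{Supp}\Ext^i_S(R,S) = V(\a_i)$, this forces every prime containing $\a_i$ to have height at least $i$. To get the vanishing, observe that $S_{\mathfrak p}$ is a regular local ring of dimension $\hgt \mathfrak p$, so every finitely generated $S_{\mathfrak p}$-module has finite projective dimension; by the Auslander--Buchsbaum formula $\pd_{S_{\mathfrak p}} R_{\mathfrak p} \leq \operatorname{depth} S_{\mathfrak p} = \hgt \mathfrak p < i$, hence $\Ext^i_{S_{\mathfrak p}}(R_{\mathfrak p}, S_{\mathfrak p}) = 0$.

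For the equality $\a_c = I^{\unmixed}$ with $c = \hgt I$, I would prove the two inclusions by different arguments. Recall that for each minimal prime $\mathfrak p$ of $I$ the $\mathfrak p$-primary component of $I$ is the unique contraction $IS_{\mathfrak p}\cap S$, and that the height-$c$ primary components are exactly those coming from the (finitely many) minimal primes $\mathfrak p$ of height $c$, since any associated prime of height $c = \hgt I$ is minimal; thus $I^{\unmixed} = \bigcap_{\mathfrak p}(IS_{\mathfrak p}\cap S)$ over these $\mathfrak p$. At such a $\mathfrak p$ the ring $R_{\mathfrak p}$ is Artinian and $S_{\mathfrak p}$ is regular, hence Gorenstein, local of dimension $c$, so $\Ext^c_{S_{\mathfrak p}}(R_{\mathfrak p}, S_{\mathfrak p})$ is (up to a degree shift) the canonical module $\omega_{R_{\mathfrak p}}$, which is the Matlis dual of $R_{\mathfrak p}$ over itself and therefore a faithful $R_{\mathfrak p}$-module. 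Consequently $(\a_c)_{\mathfrak p} = \Ann_{S_{\mathfrak p}}\Ext^c_{S_{\mathfrak p}}(R_{\mathfrak p}, S_{\mathfrak p}) = IS_{\mathfrak p}$, and since $\a_c \subseteq (\a_c)_{\mathfrak p}\cap S$ for each such $\mathfrak p$, intersecting gives $\a_c \subseteq I^{\unmixed}$.

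For the reverse inclusion $I^{\unmixed} \subseteq \a_c$ (trivial if $I = I^{\unmixed}$), I would use the short exact sequence $0 \to I^{\unmixed}/I \to S/I \to S/I^{\unmixed} \to 0$. The key point is that $(I^{\unmixed}/I)_{\mathfrak p} = 0$ at every minimal prime $\mathfrak p$ of $I$ of height $c$, so $\operatorname{Supp}(I^{\unmixed}/I) \subseteq V(I)$ contains no prime of height $\leq c$; hence $\operatorname{grade}(I^{\unmixed}/I, S) \geq c+1$ and $\Ext^j_S(I^{\unmixed}/I, S) = 0$ for all $j \leq c$. Applying $\operatorname{Hom}_S(-,S)$ to the sequence then produces a surjection $\Ext^c_S(S/I^{\unmixed}, S) \twoheadrightarrow \Ext^c_S(S/I, S)$, and since the source is an $S/I^{\unmixed}$-module so is the target, i.e. $I^{\unmixed}\cdot\Ext^c_S(R,S) = 0$. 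Combined with the previous paragraph, this gives $\a_c = I^{\unmixed}$.

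I expect the inclusion $I^{\unmixed}\subseteq \a_c$ to be the crux. The local computation only shows that $\a_c$ and $I^{\unmixed}$ agree at the height-$c$ minimal primes, and one still needs a global reason why $\a_c$ cannot be strictly smaller; the grade argument supplies it, since replacing $S/I$ by $S/I^{\unmixed}$ changes the module only by $I^{\unmixed}/I$, which has grade $> c$ and is therefore invisible to $\Ext^c_S(-,S)$. The remaining ingredients (commutation of $\Ext$ with localization, the identification of $\Ext^c$ with a canonical module over a Gorenstein local ring, faithfulness of the canonical module of an Artinian local ring, and the equality of grade with codimension over the Cohen--Macaulay ring $S$) are standard, but they should be assembled carefully, keeping track of the degree shift if a graded statement is desired.
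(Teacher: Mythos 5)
The paper cites this result from Eisenbud–Huneke–Vasconcelos without reproducing a proof, so there is no "paper's own argument" to compare against; your proposal must stand on its own, and it does. The localization-plus-Auslander--Buchsbaum argument for $\hgt \a_i \geq i$, the faithfulness of the canonical module $\Ext^c_{S_\mathfrak{p}}(R_\mathfrak{p}, S_\mathfrak{p}) \cong \omega_{R_\mathfrak{p}}$ over the Artinian ring $R_\mathfrak{p}$ at a minimal prime of height $c$, and the grade computation showing $\Ext^c_S(I^{\unmixed}/I, S) = 0$ so that $\Ext^c_S(S/I^{\unmixed}, S) \twoheadrightarrow \Ext^c_S(S/I, S)$ all check out; this is the standard route and, indeed, essentially the argument given in the cited source.
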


\begin{thm} \label{1:linear:syzygy:case}
The ring $R = S/I$ has Betti table 
\begin{center}
\textnormal{
\begin{tabular}{c|cccccccc}
  & 0 & 1 & 2 & 3  \\ 
\hline 
0 & 1 & -- & --  & -- \\ 
1 & -- & 4 & 3  & -- \\
2 & -- & -- & 1 & 1 
\end{tabular}
}
\end{center}
if and only if $I = (xz, yz, a_3x+b_3y, a_4x + b_4y)$ for some linear forms $x$, $y$, $z$, $a_3$, $a_4$, $b_3$, $b_4$ such that $\hgt(x, y) = \hgt (a_3x+b_3y, a_4x+b_4y)=2$, and $\hgt (z, a_3x+b_3y, a_4x+b_4y, a_3b_4-a_4b_3) = 3$. 
\end{thm}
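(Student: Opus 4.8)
\emph{Plan.} I would handle ($\Longleftarrow$) by writing down an explicit length-three complex and checking acyclicity via the Buchsbaum--Eisenbud criterion, and ($\Longrightarrow$) by using the ideals $\a_i=\Ann_S\Ext^i_S(R,S)$ of Lemma \ref{3.2} and Proposition \ref{annihilators:of:cohomology} to extract the normal form, and then running the acyclicity argument in reverse to recover the height conditions.

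For ($\Longleftarrow$), write $\ell_j=a_jx+b_jy$ and $\Delta=a_3b_4-a_4b_3$, and consider
\[
F_\bullet\colon\quad 0 \to S(-5) \xrightarrow{\ \phi_3\ } S(-3)^3\oplus S(-4) \xrightarrow{\ \phi_2\ } S(-2)^4 \xrightarrow{\ \phi_1\ } S \to R \to 0
\]
with $\phi_1=(xz,\,yz,\,\ell_3,\,\ell_4)$, with $\phi_2$ the matrix whose columns are the linear syzygies $(y,-x,0,0)^{\mathsf T}$, $(a_3,b_3,-z,0)^{\mathsf T}$, $(a_4,b_4,0,-z)^{\mathsf T}$ and the Koszul syzygy $(0,0,\ell_4,-\ell_3)^{\mathsf T}$, and with $\phi_3=(-\Delta,\,\ell_4,\,-\ell_3,\,z)^{\mathsf T}$. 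It is a complex by the identities $a_3\ell_4-a_4\ell_3=\Delta y$ and $b_4\ell_3-b_3\ell_4=\Delta x$, and $\hgt I=2$ since $(\ell_3,\ell_4)\subseteq I\subseteq(x,y)$. As all entries lie in $\mathfrak m$, Buchsbaum--Eisenbud reduces acyclicity to $\hgt I_1(\phi_1)=\hgt I\geq 1$, $\hgt I_3(\phi_2)\geq 2$, and $\hgt I_1(\phi_3)=\hgt(z,\ell_3,\ell_4,\Delta)\geq 3$: the first is clear, the last is the hypothesis, and the middle holds because $\ell_3^2,\ell_4^2\in I_3(\phi_2)$ force $\hgt I_3(\phi_2)\geq\hgt(\ell_3,\ell_4)=2$. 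Thus $F_\bullet$ is the minimal free resolution of $R$, which has Betti table (iii).

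For ($\Longrightarrow$), Betti table (iii) gives $\pd_S R=3$ and, since the Hilbert numerator factors as $(1-t)^2(1+2t-t^2-t^3)$, $e(R)=1$; so $\operatorname{depth} R=\dim R-1$, the local cohomology of $R$ sits in two consecutive degrees, and $\Ext^i_S(R,S)=0$ for $i\notin\{2,3\}$. Hence $\a_i=S$ for $i\neq 2,3$ and Lemma \ref{3.2} gives $\a_2\a_3\subseteq I$. By Proposition \ref{annihilators:of:cohomology}, $\a_2=I^{\unmixed}$, which is unmixed of height $2$ with $e(S/I^{\unmixed})=e(R)=1$, hence a prime generated by two linear forms; write $I^{\unmixed}=\mathfrak p=(x,y)$, so $I\subseteq\mathfrak p$ and $\mathfrak p\a_3\subseteq I$. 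Dualizing the resolution, $\a_3$ is generated by three quadrics and the single linear entry $\ell$ of $\phi_3$, and $\Ext^3_S(R,S)\cong(S/\a_3)(5)$ is cyclic; the key point is $\ell\neq 0$. Applying the depth lemma to $0\to\mathfrak p/I\to R\to S/\mathfrak p\to 0$ makes $\mathfrak p/I$ Cohen--Macaulay of codimension $3$, and the long exact sequence gives $\Ext^3_S(\mathfrak p/I,S)\cong\Ext^3_S(R,S)$; if $\ell=0$ then $\a_3$ is a complete intersection of three quadrics, so $S/\a_3$ is Gorenstein, whence $\omega_{\mathfrak p/I}$ and then $\omega_{\omega_{\mathfrak p/I}}\cong\mathfrak p/I$ would be cyclic, contradicting that $(\mathfrak p/I)_1=\langle\bar x,\bar y\rangle$ is two-dimensional. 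So choose a nonzero linear form $z\in\a_3$; then $zx,zy\in\mathfrak p\a_3\subseteq I$. Since $I$ is generated in degree $2$ and $zx,zy$ span a $2$-dimensional subspace of the $4$-dimensional $I_2\subseteq(x,y)_2$, complete them to a basis $zx,zy,\ell_3,\ell_4$ of $I_2$ and write $\ell_j=a_jx+b_jy$, so $I=(xz,yz,\ell_3,\ell_4)$ with $\hgt(x,y)=2$.

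It remains to verify the last two height conditions. If $\hgt(\ell_3,\ell_4)\leq 1$, then $\ell_3,\ell_4$ are proportional (impossible as $\dim I_2=4$) or share a common linear factor, the latter producing a fourth, linearly independent, linear syzygy of $I$ and contradicting $\beta_{2,3}=3$; hence $\hgt(\ell_3,\ell_4)=2$. Then the homology of the complex $F_\bullet$ attached to these data vanishes in positive degrees, because the three evident linear syzygies and the degree-five syzygy $(-\Delta,\ell_4,-\ell_3,z)$ are forced, by the given Betti numbers together with $\hgt(\ell_3,\ell_4)=2$, to be minimal generators of $\syz_1(I)$ and $\syz_2(I)$ in the required degrees; so $F_\bullet$ is the minimal resolution, $\a_3=I_1(\phi_3)=(z,\ell_3,\ell_4,\Delta)$, and by Proposition \ref{annihilators:of:cohomology} (and the Cohen--Macaulayness of $\mathfrak p/I$) $\hgt(z,\ell_3,\ell_4,\Delta)=\hgt\a_3=3$. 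I expect the main obstacle to be exactly the claim $\ell\neq 0$ — that the last syzygy of $R$ involves a genuinely new linear form — for which the Cohen--Macaulay double-duality argument above is the crux; after it, the normal form falls out and only routine but delicate bookkeeping (identifying $\a_3$, excluding the degenerate $\ell_3,\ell_4$, and reading off the heights) remains.
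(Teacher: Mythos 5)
Your proof is correct, and while it uses the same overall framework as the paper — the annihilators $\a_i=\Ann_S\Ext^i_S(R,S)$, the identification $\a_2=\mathfrak p$ and $\a_3=I_1(\phi_3)$, the explicit complex $F_\bullet$, and Buchsbaum--Eisenbud for the converse — the three delicate steps of $(\Longrightarrow)$ are handled by genuinely different arguments.  For the crucial claim that the linear entry $z$ of $\phi_3$ is nonzero, you first establish that $\mathfrak p/I$ is Cohen--Macaulay of codimension~3 via the depth lemma, identify $\omega_{\mathfrak p/I}\cong\Ext^3_S(R,S)\cong(S/\a_3)(5)$, and note that if $z=0$ then $S/\a_3$ would be a Gorenstein complete intersection, forcing the double dual $\omega_{\omega_{\mathfrak p/I}}\cong\mathfrak p/I$ to be cyclic, contradicting $\dim_\kk(\mathfrak p/I)_1=2$.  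The paper instead observes that if $z=0$, then $\a_3$ is a CI of quadrics and at least one row of the matrix of linear syzygies of $I$ would be a nontrivial \emph{linear} syzygy on a regular sequence, which is impossible — a shorter and more elementary argument, but your duality argument explains structurally \emph{why} such a degeneration cannot occur.  For $\hgt(\ell_3,\ell_4)=2$, you produce a fourth $\kk$-independent linear syzygy $(0,0,m_4,-m_3)$ from a common linear factor, contradicting $\beta_{2,3}=3$; the paper instead reduces to the observation that three generators divisible by a common linear form forces $e(R)=2$ (citing \cite[3.3]{Koszul:algebras:defined:by:4:quadrics}).  Finally, for the upper bound $\hgt\a_3\leq3$, you read it off from the Cohen--Macaulayness of $\mathfrak p/I$ (whose support equals that of its canonical module), whereas the paper bounds $\hgt(z,I_2(M))\leq 1+\hgt I_2(M)\leq3$ directly.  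Establishing CM-ness of $\mathfrak p/I$ early pays off twice in your version (for $z\neq0$ and for $\hgt\a_3$); the paper avoids it entirely and is somewhat more compact, but each approach is sound.
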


\begin{proof}
By computing the Hilbert series from the Betti table of $R$, we see that $e(R)=1$ and $\hgt I = 2$.  It then follows from the associativity formula for multiplicity \cite[4.7.8]{Bruns:Herzog} that $I$ is contained in a unique minimal prime of height two, which has the form $P = (x, y)$ for independent linear forms $x$ and $y$, and the length of $R_P$ is one so that $P$ must be the $P$-primary component of $I$. Setting $\a_i = \Ann_S \Ext^i_S(R, S)$ for each $i$, it follows from the above proposition that $P = I^{\unmixed} = \a_2$.  By the above lemma, we also know that $\a_2\a_3 \subseteq I$ since $\a_i = R$ for $i < 2$ as $\Ext^i_S(R, S) = 0$.  Now, consider the minimal free resolution $F_\bullet$ of $R$ over $S$, which by assumption has the form
\begin{equation} \label{case:2:minimal:free:resolution}
F_\bullet: \qquad 0 \longto S(-5) \stackrel{\phi_3}{\longto} S(-3)^3 \oplus S(-4) \stackrel{\phi_2}{\longto} S(-2)^4 \stackrel{\phi_1}{\longto} S.
\end{equation}
Since $\phi_3^*: S(3)^3 \oplus S(4) \to S(5)$ is a presentation for $\Ext^3_S(R, S)$, we see that $\a_3 = I_1(\phi_3)$.  Hence, there is a linear form $z$ such that $Pz \subseteq \a_2\a_3 \subseteq  I$.  We note that $z \neq 0$; otherwise $\a_3$ would be a complete intersection of 3 quadrics by the above proposition, and at least one of the rows of the matrix of linear syzygies of $I$ would be a nontrivial linear syzygy of $\a_3$, which is impossible.  Consequently, $xz$ and $yz$ are independent quadrics so that they are part of a minimal set of generators for $I$, and we can write $I = (xz, yz, q_3, q_4)$ where $q_i = a_ix + b_iy$ for some linear forms $a_i$ and $b_i$ as $I \subseteq P$.

We claim that $\hgt (q_3,q_4) = 2$. Indeed, if this is not the case, then $(q_3,q_4)\subseteq (\ell)$ for a linear form $\ell$.  If $\ell \notin P$, then $(\ell, z)$ is a prime ideal of height at most 2 different from $P$ that contains $I$, which is impossible.  If instead $\ell \in P$, then for any linear form $y'$ with $P = (\ell, y')$, we have $I = (\ell z, y'z, q_3, q_4)$.  Thus, three of the generators of $I$ are divisible by $\ell$, which implies $e(R) = 2$ by \cite[3.3]{Koszul:algebras:defined:by:4:quadrics}. 

Set $\Delta = a_3b_4 - a_4b_3$.  It remains to show that $\hgt (z, q_3, q_4, \Delta) = 3$.  We claim that the maps $\phi_i$ in \eqref{case:2:minimal:free:resolution} can be chosen so that
\begin{equation} \label{case:2:differentials}
\phi_3 = \begin{pmatrix} \Delta \\ -q_4 \\ q_3 \\ z \end{pmatrix} 
\quad 
\phi_2 = \begin{pmatrix} 
y & a_3 & a_4 & 0 \\
-x & b_3 & b_4 & 0 \\
0 & -z & 0 & q_4 \\
0 & 0 & -z & -q_3
\end{pmatrix}  
\quad
\phi_1 = \begin{pmatrix} xz & yz & q_3 & q_4 \end{pmatrix}.
\end{equation}
Indeed, the columns of $\phi_2$ are clearly syzygies of $I$.  We note that $(q_3, q_4) \nsubseteq (z)$ since $\hgt(q_3, q_4) = 2$.  Since the first three columns are easily seen to be independent linear syzygies and the fourth column is not contained in the $S$-span of the first three columns by the preceding observation, it follows that columns of $\phi_2$ are part of a minimal set of generators for $\syz_1^S(I)$, and so, our assumption on Betti table of $R$ forces these syzygies to be all of the minimal generators for $\syz_1^S(I)$.  A similar argument shows that $\phi_3$ is the unique syzygy on these generators up to scalar multiple.  With the maps as above, it is clear that $\hgt (z, q_3, q_4, \Delta) = \hgt \a_3 \geq 3$ by the preceding proposition.  On the other hand, $(z, q_3, q_4, \Delta) = (z, I_2(M))$ where $M$ is the submatrix consisting of the first two rows and first three columns of $\phi_2$, and so, we have $\hgt (z, q_3, q_4) \leq 1 + \hgt I_2(M) \leq 3$ by \cite[2.1]{Bruns:Vetter} so that $\hgt (z, q_3, q_4) = 3$.

Conversely, when $I$ has the preceding form, we show that $R$ has the desired Betti table by explicitly constructing the minimal free resolution of $R$.  Consider the complex $F_\bullet$ of free graded $S$-modules as in \eqref{case:2:minimal:free:resolution} with differentials as in \eqref{case:2:differentials}.  We use the Buchsbaum-Eisenbud acyclicity criterion  \cite{what:makes:a:complex:exact?}  to show that \eqref{case:2:minimal:free:resolution} is the minimal free resolution of $R$.  We must check that $\hgt I_{r_i}(\phi_i) \geq i$ for each $i \geq 1$, where $r_i = \sum_{j \geq i} (-1)^{j-i}\rank F_j$.  In our case, it is clear that $\hgt I_1(\phi_1) = \hgt I \geq 1$ and that $I_1(\phi_3) = (z, q_3, q_4, \Delta)$ has height three by assumption.  Finally, one easily checks by computing minors that $(q_3, q_4)^2 \subseteq I_3(\phi_2)$, and thus, by assumption, $\hgt I_3(\phi_2) \geq 2$.
\end{proof}

\begin{cor} \label{1:linear:syzygy:case:is:LG-quadratic}
If $R = S/I$ is as in Theorem \ref{1:linear:syzygy:case}, then $R$ is LG-quadratic.
\end{cor}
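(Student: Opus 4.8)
The plan is to degenerate $I$ to a ``generic'' member of its family, invoke Corollary~\ref{Betti:table:preserved:implies:quotient:by:regular:sequence}, and thereby reduce the statement to a single algebra. Write $I = (xz, yz, a_3x+b_3y, a_4x+b_4y)$ as in Theorem~\ref{1:linear:syzygy:case}. Adjoin seven new indeterminates to obtain $\tilde S = S[X, Y, Z, A_3, A_4, B_3, B_4]$ and set $\tilde I = (XZ,\ YZ,\ A_3X + B_3Y,\ A_4X + B_4Y) \subseteq \tilde S$. One verifies readily that $\tilde I$ satisfies the hypotheses of Theorem~\ref{1:linear:syzygy:case}: clearly $\hgt(X, Y) = \hgt(A_3X+B_3Y, A_4X+B_4Y) = 2$, and $\hgt(Z, A_3X+B_3Y, A_4X+B_4Y, A_3B_4-A_4B_3) = 3$ because on the open set where $(X, Y) \neq 0$ the vanishing of the two quadrics already forces $A_3B_4 - A_4B_3 = 0$, so that $V(Z, A_3X+B_3Y, A_4X+B_4Y, A_3B_4-A_4B_3)$ has a component of codimension three, while this variety meets $\{X = Y = Z = 0\}$ in codimension four. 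Since $\tilde I$ is generated in the subring on the seven new variables, Theorem~\ref{1:linear:syzygy:case} (whose conclusion is unchanged by the flat extension to $\tilde S$) gives that $\tilde S/\tilde I$ has Betti table (iii) over $\tilde S$. Finally, the $\kk$-algebra homomorphism $\phi\colon \tilde S \to S$ that is the identity on $S$ and carries $X, Y, Z, A_3, A_4, B_3, B_4$ to $x, y, z, a_3, a_4, b_3, b_4$ satisfies $\phi(\tilde I) = I$, and its kernel $L = (X - x,\ Y - y,\ Z - z,\ A_3 - a_3,\ A_4 - a_4,\ B_3 - b_3,\ B_4 - b_4)$ is generated by seven linear forms with $\tilde S/(\tilde I + L) \cong R$.

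Now $\tilde S/\tilde I$ (over $\tilde S$) and $R = \tilde S/(\tilde I + L)$ (over $\tilde S/L \cong S$) have the same graded Betti numbers, namely those of Betti table (iii), so Corollary~\ref{Betti:table:preserved:implies:quotient:by:regular:sequence} shows that $L$ is generated by a regular sequence modulo $\tilde I$. Hence $R$ is a quotient of $\tilde S/\tilde I$ by a regular sequence of linear forms, and it suffices to prove that $\tilde S/\tilde I$ is LG-quadratic.

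It remains to show that $\tilde S/\tilde I$ is G-quadratic, by producing---after a linear change of coordinates---a Gr\"obner basis of $\tilde I$ consisting of four quadrics; this is the step I expect to be the main obstacle. The Hilbert series pins down the candidate initial ideal: $(X^2,\ Y^2,\ XZ,\ YZ)$ has Hilbert series numerator $1 - 4t^2 + 3t^3 + t^4 - t^5$, which is exactly that of $\tilde S/\tilde I$, so it is enough to choose coordinates and a term order under which the four generators of $\tilde I$ acquire leading terms $X^2$, $Y^2$, $XZ$, $YZ$ (the agreement of Hilbert functions then forces these four quadrics to be a Gr\"obner basis generating $\tilde I$). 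The delicate point is that every quadric of $\tilde I$ lies in the fat linear ideal $(X, Y)$, so the coordinate change must be arranged so as to create the square leading terms $X^2$ and $Y^2$; an alternative is to realize $\tilde S/\tilde I$ as a quotient, by a regular sequence of linear forms, of a generic determinantal ring---which is G-quadratic---using that $\tilde I$ consists of $Z\cdot(X,Y)$ together with the two $2\times 2$ minors $A_3X+B_3Y$ and $A_4X+B_4Y$ of the $3\times 2$ matrix with rows $(A_3,B_3)$, $(A_4,B_4)$, $(-Y,X)$. Once this G-quadratic degeneration is in hand, the reduction above shows that $R$ is LG-quadratic.
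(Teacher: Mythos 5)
Your reduction to the generic case is exactly the paper's: pass to $\tilde S/\tilde I$ with independent variables, observe that Theorem~\ref{1:linear:syzygy:case} gives the same Betti table there, invoke Corollary~\ref{Betti:table:preserved:implies:quotient:by:regular:sequence} to see that the specializing linear forms are a regular sequence, and thereby reduce to showing the generic algebra is G-quadratic. That part is correct and matches the paper's argument. The height check you do for $\tilde I$ is also fine.

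The gap is the one you flag yourself: you never actually exhibit a Gr\"obner basis. Moreover, the ``delicate point'' you raise is a red herring. You assume that, because all four quadrics lie in $(X,Y)$, a linear change of coordinates is needed to manufacture square leading terms $X^2$, $Y^2$. The paper shows this is unnecessary: with \emph{no} change of coordinates and the degree lexicographic order determined by $a_3 > b_3 > b_4 > a_4 > x > y > z$, the given generators $xz$, $yz$, $a_3x+b_3y$, $a_4x+b_4y$ already form a Gr\"obner basis, with initial ideal $(xz, yz, a_3x, b_4y)$ --- which perfectly well lies inside $(x,y)$. The verification is a short Buchberger computation: the S-pairs $(q_1,q_4)$, $(q_2,q_3)$, $(q_3,q_4)$ have relatively prime leading terms, $S(q_1,q_2)=0$, and $S(q_1,q_3)=-b_3yz$, $S(q_2,q_4)=-a_4xz$ reduce to zero against $q_2$, $q_1$. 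Your alternative candidate $(X^2,Y^2,XZ,YZ)$ \emph{can} in fact be realized (set $U=A_3-X$, $V=B_4-Y$ and use the weight order giving $X,Y,Z$ weight $1$ and the other variables weight $0$, which makes $X^2$, $Y^2$ the leading terms of $q_3$, $q_4$; the Hilbert series identity you computed then forces this to be the initial ideal), but you must actually carry out such a construction rather than only gesture toward it. The determinantal-ring alternative as stated does not work as given, since $\tilde I$ omits the third minor $A_3B_4-A_4B_3$ and hence is not an $I_2$ of the displayed matrix, and you give no argument for how one would reduce from a determinantal ring to $\tilde I$ by a regular sequence of linear forms.
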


\begin{proof}
We can consider the generic ideal of the form described by the theorem where the linear forms are all variables.  By the theorem and Corollary \ref{Betti:table:preserved:implies:quotient:by:regular:sequence}, any particular ideal $I$ of the above form must be obtained by specializing the generic case by a regular sequence of linear forms.  Thus, it suffices to prove that $I$ is G-quadratic in the generic case.  

In that case, we claim that the generators of $I$ are a Gr\"obner basis for any degree lexicographic order satisfying $a_3 > b_3 > b_4 > a_4 > x > y > z$.  Setting $q_1 = xz$ and $q_2 = yz$, we apply Buchberger's Criterion (see \cite{Herzog:Ene}) by checking that the S-pairs $S(q_i, q_j)$ reduce to zero upon division by $q_1, \dots, q_4$ for all $i < j$.  This is clear for $(i, j) = (1, 2)$, $(1, 4)$, $(2, 3)$, and $(3, 4)$ since either the S-pair is zero or the initial monomials of $q_i$ and $q_j$ are relatively prime in those cases.  In the remaining cases, we see that $S(q_1, q_3) = -b_3yz$ and $S(q_2, q_4) = -a_3xz$ also clearly reduce to zero so that the generators of $I$ form a Gr\"obner basis as claimed.
\end{proof}

We now turn our attention to the last remaining Betti table from Theorem \ref{Koszul:4:quadric:height:2:Betti:tables}.  This is the first instance where the Betti table does not determine whether the ring $R$ is Koszul.  

\begin{thm}\label{2:linear:syzygies:case}
The ring $R = S/I$ has Betti table
\vspace{1 ex}
\begin{center}
\textnormal{
\begin{tabular}{c|cccccccc}
  & 0 & 1 & 2 & 3 & 4 \\ 
\hline 
0 & 1 & -- & --  & -- & -- \\ 
1 & -- & 4 & 2  & --  & -- \\
2 & -- & -- & 4 & 4 & 1
\end{tabular}
}
\end{center}
\vspace{1 ex}
if and only if there are linear forms $x$, $y$, $a_i$, and $b_i$ with $\hgt(x, y) = 2$ such that $I$ has one of the following forms:
\begin{enumerate}[label = \textnormal{(\alph*)}]
\item $(x^2, b_3x, a_3x + b_3y, a_4x + b_4y)$, where $\hgt(x, b_3, b_4) = 3$ and $\hgt(x, y, a_3, b_3) = 4$;
\item $(xy, a_2x, b_3y, a_4x+b_4y)$, where $\hgt(x, b_3, b_4) = \hgt (y, a_2, a_4) = 3$ and $\hgt (x, y, a_2, b_3) = 4$;
\item $(b_3x, b_4x, a_3x + b_3y, a_4x + b_4y)$, where $\hgt(x, b_3, b_4) = 3$, $\hgt(a_3, a_4, b_3, b_4) = 4$, and $\hgt (a_3x + b_3y, a_4x + b_4y) = 2$;
\item $(a_1x, a_2x, b_3y, b_4y)$, where $(a_1x,a_2x)$ and $(b_3y, b_4y)$ are transversal ideals with $\hgt(a_1, a_2) = \hgt(b_3, b_4) = 2$.
\end{enumerate}
\end{thm}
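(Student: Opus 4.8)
The plan is to read the coarse numerical data off the Betti table and then reduce the problem to the study of a $2\times 4$ matrix of linear forms. Computing the numerator of the Hilbert series from the given table gives $(1-t)^2(1+t-t^2)^2$, so that $\hgt I = 2$, $e(R) = 1$, $\reg_S R = 2$, and $\pd_S R = 4$. Arguing exactly as at the start of the proof of Theorem~\ref{1:linear:syzygy:case} --- via the associativity formula for multiplicity, the hypothesis $\kk = \bar\kk$, and Proposition~\ref{annihilators:of:cohomology} --- the ideal $I$ has a unique minimal prime $P = (x,y)$ of height two, $R_P$ has length one, and $I^{\unmixed} = P = (x,y)$. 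Every minimal generator $q_i$ of $I$ therefore lies in $(x,y)$, so we may write $q_i = a_i x + b_i y$ for linear forms $a_i, b_i$; let $M$ be the $2\times 4$ matrix whose $i$-th column is $(a_i,b_i)^{\T}$, so that the presentation map $S(-2)^4\to S$, $e_i\mapsto q_i$, factors as $S(-2)^4\xrightarrow{M}S(-1)^2\xrightarrow{(x\ y)}S$.

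\textbf{Forward direction.} Since $x,y$ form a regular sequence, a tuple $v = (v_1,\dots,v_4)^{\T}$ is a syzygy of $(q_1,\dots,q_4)$ if and only if $Mv = t\,(y,-x)^{\T}$ for some $t\in S$; this produces a graded map $\eta\colon \syz_1^S(I)\to S$ with $\ker\eta = \syz_1^S(M)$. The Betti table supplies exactly a two-dimensional space $V$ of linear syzygies, and I would organize the classification according to whether $V\subseteq \syz_1^S(M)$ or $\eta(V)\neq 0$, with a secondary invariant in each branch --- for instance, whether the four generators split into two pairs each generating a height-one ideal, respectively whether $I$ contains the square of a linear form --- separating the two subcases. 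Using $\hgt I = 2$, minimality, and the known shape
\[
0 \longto S(-6) \xrightarrow{\ \phi_4\ } S(-5)^4 \xrightarrow{\ \phi_3\ } S(-3)^2\oplus S(-4)^4 \xrightarrow{\ \phi_2\ } S(-2)^4 \xrightarrow{\ \phi_1\ } S
\]
of the minimal free resolution (together with the Buchsbaum--Eisenbud rank inequalities, which in particular force the four linear forms comprising $\phi_4$ to form a regular sequence), column operations on $M$ and a linear change of coordinates should bring $M$ into one of four explicit ``staircase'' forms, one for each of the ideals listed in~(a)--(d) (with $M$ block-diagonal in case~(d)). The height hypotheses on $x,y,a_i,b_i$ in each case are then exactly the conditions guaranteeing that $I$ is minimally generated by four quadrics and has no linear syzygies beyond the two forced by the Betti table, equivalently that the candidate complex below is acyclic and minimal.

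\textbf{Converse.} For each of~(a), (b), and~(c), I would write down an explicit complex $F_\bullet$ of the above shape with the $\phi_i$ built from $x$, $y$, and the $a_i,b_i$ --- its columns being the evident Koszul-type and linear syzygies, just as in the converse half of Theorem~\ref{1:linear:syzygy:case} --- and then check that $F_\bullet$ is the minimal free resolution of $R$ via the Buchsbaum--Eisenbud acyclicity criterion: minimality is automatic since all entries of the $\phi_i$ lie in the maximal ideal, and the conditions $\hgt I_1(\phi_1)\ge 1$, $\hgt I_3(\phi_2)\ge 2$, $\hgt I_3(\phi_3)\ge 3$, $\hgt I_1(\phi_4)\ge 4$ translate, after computing a handful of minors, precisely into the stated height hypotheses. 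Case~(d) is of a different flavor: there $I$ is the sum of two transversal height-one ideals of quadrics, so (as in the proof of Proposition~\ref{transveral:height:1:ideals:are:LG-quadratic}) its minimal free resolution is the tensor product of two shifted Koszul complexes, which one checks has the prescribed Betti table; conversely the transversality hypothesis in~(d) is exactly what prevents the Betti numbers from growing.

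\textbf{Main obstacle.} The crux is the forward case analysis: proving that the two linear syzygies --- constrained only by $\hgt I = 2$ and by the requirement that the resolution have the displayed shape with no further syzygies --- force $M$ into exactly one of the four staircase configurations and no others, and pinning down the genericity conditions so that they are simultaneously necessary for the Betti table to be the given one and strong enough to drive the acyclicity argument in the converse. In practice this amounts to ruling out the degenerate configurations in which a coincidence among the linear forms either collapses the Betti table --- introducing extra linear syzygies or dropping $\pd_S R$ --- or lowers one of the relevant heights, and this bookkeeping is the technical heart of the argument.
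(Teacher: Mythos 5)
Your reduction to the $2\times 4$ coefficient matrix $M$ with $q_i = a_ix + b_iy$ is a genuinely different organizing device from what the paper uses: the paper works with the \emph{transposed} object, namely the $4\times 2$ matrix of linear first syzygies on the $q_i$, and drives the classification through a lemma asserting (b) that this matrix has a generalized zero and (c) that a linear syzygy whose order ideal has height $3$ forces a generalized zero row. Part (b) is the real technical input and is proved by an argument you do not anticipate: if the matrix of linear syzygies were $1$-generic, then $I_2$ of it would be a height-$3$ prime, the Buchsbaum--Rim complex would force $I \subseteq I_2$, and this contradicts the fact (imported from \cite{projective:dimension:of:height:2:ideals:of:quadrics} and \cite{Koszul:algebras:defined:by:4:quadrics}) that $I$ contains a reducible quadric. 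Nothing in your proposal substitutes for this step.

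Your own case analysis --- split by whether the two-dimensional space $V$ of linear syzygies lands in $\ker\eta$, then by whether the quadrics split into two height-one pairs or whether $I$ contains $\ell^2$ --- is plausible as a post-hoc description of the four forms (a)--(d), but you have given no argument that these dichotomies are exhaustive, nor that they cleanly track the Betti table rather than, say, merging two of the cases or admitting a fifth. You acknowledge this yourself (``this bookkeeping is the technical heart of the argument''), and indeed that bookkeeping \emph{is} the theorem; without the generalized-zero machinery or an equivalent, the claim that column operations bring $M$ into exactly four staircase shapes remains unproved. The paper's Case~1/Case~2 split (generalized zero row or not), with subcases determined by $\hgt(P,a,b)$ and by $\hgt(s,u,v,w)$ respectively, does a great deal of explicit linear algebra over $P = (x,y)$ that your sketch does not supply.

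Two smaller remarks. First, the paper does \emph{not} establish the converse by running Buchsbaum--Eisenbud on a shape-$\phi_i$ complex for cases (a)--(c) as you propose; instead it computes the colon ideal $(J:q_4)$ where $J$ is generated by the first three quadrics, identifies $J$ and $(J:q_4)$ as well-understood ideals with small known resolutions, and takes the mapping cone of the lift of multiplication by $q_4$. That route produces the Betti table and, crucially, also exhibits the first syzygy module explicitly, which is what the non-Koszulness argument of Lemma \ref{non-Koszul:2:linear:syzygies:case} feeds on; your proposed acyclicity check would give the Betti numbers but not that extra information. Second, part (a) of the paper's lemma --- that $(I:P)$ is nondegenerate --- is used repeatedly in the forward direction to rule out degenerations (for instance to eliminate $\hgt(P,a,b)\in\{2,4\}$); your setup does not surface this fact, and it is not obviously recoverable from $\eta$ alone.
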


To simplify the exposition of the proof of the theorem, we first state some preparatory lemmas.

\begin{rmk}
If $I_1, I_2 \subseteq S$ are transversal ideals and $z$ is a linear form which is a nonzerodivisor modulo $I = I_1 + I_2$, then the images of $I_1$ and $I_2$ in $S' = S/zS$ are still transversal.  Indeed, if $I'_j = (I_j, z)$ and $I' = (I, z)$, then applying the Snake Lemma to multiplication by $z$ on each term of the exact sequence 
\[ 0 \to S/(I_1 \cap I_2) \to S/I_1 \oplus S/I_2 \to S/I \to 0 \] 
yields an exact sequence $0 \to S/(I_1 \cap I_2, z) \to S/I'_1 \oplus S/I'_2 \to S/I' \to 0$ so that $I'_1 \cap I'_2 = (I_1 \cap I_2, z)$.  Since $I_1$ and $I_2$ are transversal, we see that $I'_1 \cap I'_2 = (I_1 \cap I_2, z) = (I_1I_2, z) = I'_1I'_2$ so that $I_1S' \cap I_2S' = (I'_1 \cap I'_2)S' = I'_1I'_2S' = (I_1S')(I_2S')$ as claimed.
\end{rmk}

\begin{lemma} \label{transversal:case}
Suppose that $I = (a_1x, a_2x, b_3y, b_4y) \subseteq S$ for some linear forms $x$, $y$, $a_i$, and $b_i$.  Then the Betti table of $R = S/I$ is
\vspace{1 ex}
\begin{center}
\textnormal{
\begin{tabular}{c|cccccccc}
  & 0 & 1 & 2 & 3 & 4 \\ 
\hline 
0 & 1 & -- & --  & -- & -- \\ 
1 & -- & 4 & 2  & --  & -- \\
2 & -- & -- & 4 & 4 & 1
\end{tabular}
}
\end{center}
\vspace{1 ex}
if and only if $(a_1x, a_2x)$ and $(b_3y, b_4y)$ are transversal ideals with $\hgt(a_1, a_2) = \hgt(b_3, b_4) = \hgt(x, y) = 2$.
\end{lemma}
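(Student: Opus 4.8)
The plan is to prove the two implications separately: the direction ($\Leftarrow$) by taking a tensor product of Koszul-type resolutions, and the direction ($\Rightarrow$) by deforming to the ``generic'' transversal ideal and invoking the rigidity of Betti tables under linear specialization, exactly in the spirit of Corollary~\ref{1:linear:syzygy:case:is:LG-quadratic}.

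\textbf{($\Leftarrow$).} Write $I_1 = (a_1 x, a_2 x)$ and $I_2 = (b_3 y, b_4 y)$, so $I = I_1 + I_2$. Since $\hgt(a_1, a_2) = 2$ and $x \neq 0$, the minimal free resolution of $S/I_1$ is the Koszul complex on $a_1, a_2$ with its first differential scaled by $x$, that is, $F_\bullet\colon 0 \to S(-3) \to S(-2)^2 \to S$; likewise $S/I_2$ has a minimal free resolution $G_\bullet$ of the same shape (compare the proof of Proposition~\ref{transveral:height:1:ideals:are:LG-quadratic}). Because $I_1$ and $I_2$ are transversal, all higher $\Tor^S_i(S/I_1, S/I_2)$ vanish, so $F_\bullet \otimes_S G_\bullet$ is a free resolution of $S/I_1 \otimes_S S/I_2 = S/I = R$; it is minimal because every entry of its differential is a form of positive degree. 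Tallying the graded summands of $F_\bullet \otimes_S G_\bullet$ in homological degrees $0$ through $4$ (so that $(F\otimes G)_2 = S(-3)^2 \oplus S(-4)^4$, and so on) produces precisely the displayed Betti table.

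\textbf{($\Rightarrow$).} Suppose $R$ has the displayed Betti table. Its Hilbert series then forces $\hgt I = 2$ (and $e(R) = 1$). Since $I_1 \subseteq (x)$ and $I_2 \subseteq (y)$, we have $I \subseteq (x, y)$, so $x$ and $y$ must be linearly independent and $\hgt(x, y) = 2$. If $\hgt(a_1, a_2) \leq 1$, then $I_1$ is principal or zero and $I$ would require at most three generators, contradicting $\beta_1^S(R) = 4$; hence $\hgt(a_1, a_2) = 2$, and symmetrically $\hgt(b_3, b_4) = 2$. It remains to prove transversality. Put $\tilde S = S[u_1, u_2, v_3, v_4, X, Y]$ with new variables and $\tilde I = (u_1 X, u_2 X, v_3 Y, v_4 Y) \subseteq \tilde S$. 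The ideals $(u_1 X, u_2 X)$ and $(v_3 Y, v_4 Y)$ involve disjoint variables, hence are transversal, and all the relevant heights are $2$; so by the case already proved, $\tilde S/\tilde I$ has the same (displayed) Betti table as $R$. Let $L = (u_1 - a_1, u_2 - a_2, v_3 - b_3, v_4 - b_4, X - x, Y - y) \subseteq \tilde S$, so that $\tilde S/L \cong S$ and the image of $\tilde I$ is $I$. Then the graded Betti numbers of $\tilde S/(\tilde I, L) \cong R$ over $\tilde S/L \cong S$ agree with those of $\tilde S/\tilde I$ over $\tilde S$, so Corollary~\ref{Betti:table:preserved:implies:quotient:by:regular:sequence} shows that $L$ is generated by a regular sequence modulo $\tilde I$. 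Feeding this regular sequence one element at a time into the Remark preceding Lemma~\ref{transversal:case}, the images of $(u_1 X, u_2 X)$ and $(v_3 Y, v_4 Y)$ in $\tilde S/L \cong S$ — namely $I_1$ and $I_2$ — remain transversal, which is the remaining claim.

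\textbf{Main obstacle.} The transversality assertion in ($\Rightarrow$) is the hard part, and it is genuinely non-numerical: there exist ideals of the prescribed syntactic form $(a_1x, a_2x, b_3y, b_4y)$ which fail to be transversal yet still have $\hgt I = 2$ and $e(R) = 1$ (for instance when $(a_1,a_2)$ and $(b_3,b_4)$ share a common linear form), so transversality cannot be isolated by multiplicity or codimension alone. The deformation argument circumvents this precisely because the \emph{entire} Betti table is rigid under specialization by a regular sequence of linear forms (Corollary~\ref{Betti:table:preserved:implies:quotient:by:regular:sequence}): equality of the Betti tables of $\tilde S/\tilde I$ and $R$ is exactly what forces $L$ to be $\tilde I$-regular, after which the preceding Remark carries transversality from $\tilde S$ down to $S$. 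A minor point worth noting is that the degenerate possibilities — such as $y \in (a_1, a_2)$, which would make $(a_1,a_2)$ a second height-two minimal prime of $I$ and push $e(R)$ up to $2$ — cannot occur under the standing hypothesis, and they need not be analyzed separately since they are ruled out automatically once $L$ is known to be $\tilde I$-regular. (Alternatively, one could argue ($\Rightarrow$) more computationally by identifying $\Tor_1^S(S/I_1, S/I_2)$ with a shift of $\bigl(((a_1,a_2):y)\cap((b_3,b_4):x)\bigr)/(a_1,a_2)(b_3,b_4)$ and checking that its nonvanishing changes $\dim_\kk(R)_3$ away from the value imposed by Betti table (iv); the deformation route is cleaner.)
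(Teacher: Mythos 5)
Your proof follows essentially the same route as the paper's: the forward direction takes the tensor product of the two (Koszul-type) resolutions of $S/I_1$ and $S/I_2$ using transversality (vanishing of higher $\Tor$), and the converse deforms to the generic case, applies Corollary~\ref{Betti:table:preserved:implies:quotient:by:regular:sequence} to see the specialization is by a regular sequence of linear forms, and then descends transversality via the remark preceding the lemma. The only difference is that you spell out the deformation step and the regular-sequence argument more explicitly than the paper does, which is fine.
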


\begin{proof}
Let $I_1=(a_1x,a_2x)$ and $I_2=(b_3y,b_4y)$.  If $I_1$ and $I_2$ are transversal and $\hgt(a_1, a_2) = \hgt(b_3, b_4) = 2$, then the minimal free resolution of $S/I_j$ is of the form $0 \longto S(-3) \longto S(-2)^2 \longto S$, and the tensor product of the resolutions of $S/I_1$ and $S/JI_2$ is a resolution of $R$ so that $R$ has the given Betti table. 

Conversely, suppose that $R$ has the given Betti table.  Then it is easily seen that $\hgt I = 2$, and $I$ is minimally generated by 4 quadrics, from which it follows that $\hgt (a_1, a_2) = \hgt (b_3, b_4) = \hgt (x, y) = 2$.  It remains to see that $I_1$ and $I_2$ are transversal.  This is clear in the generic case where the linear forms are all variables.  By Corollary \ref{Betti:table:preserved:implies:quotient:by:regular:sequence}, $I$ must be obtained by specializing the generic case by a regular sequence of linear forms, and so, $I_1$ and $I_2$ are transversal by the preceding remark.
\end{proof}

\begin{lemma} \label{non-Koszul:2:linear:syzygies:case}
If $I$ has one of the forms \textnormal{(a)--(c)} in Theorem \ref{2:linear:syzygies:case}, then $R = S/I$ has Betti table
\begin{center}
\textnormal{
\begin{tabular}{c|cccccccc}
  & 0 & 1 & 2 & 3 & 4 \\ 
\hline 
0 & 1 & -- & --  & -- & -- \\ 
1 & -- & 4 & 2  & -- & -- \\
2 & -- & -- & 4 & 4 & 1 
\end{tabular}
}
\end{center}
and is not Koszul.
\end{lemma}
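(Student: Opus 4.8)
The statement packages together a Betti-number computation and a Koszulness obstruction, and I would prove these two halves separately.

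\emph{The Betti table.} For each of the forms (a)--(c) the plan is to mimic the proof of Theorem~\ref{1:linear:syzygy:case}: exhibit an explicit complex of free graded $S$-modules
\[ 0 \longto S(-6) \stackrel{\phi_4}{\longto} S(-5)^4 \stackrel{\phi_3}{\longto} S(-3)^2 \oplus S(-4)^4 \stackrel{\phi_2}{\longto} S(-2)^4 \stackrel{\phi_1}{\longto} S \]
with $\phi_1 = (g_1\ g_2\ g_3\ g_4)$ the listed generators, verify by hand that $\phi_i\phi_{i+1} = 0$, and apply the Buchsbaum--Eisenbud acyclicity criterion~\cite{what:makes:a:complex:exact?}. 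The expected ranks are $r_1 = 1$, $r_2 = r_3 = 3$, $r_4 = 1$, so acyclicity reduces to $\hgt I \ge 1$, $\hgt I_3(\phi_2) \ge 2$, $\hgt I_3(\phi_3) \ge 3$, and $\hgt I_1(\phi_4) \ge 4$, and the three height hypotheses attached to each form are tailored exactly to these inequalities. For instance the height-four condition in each case -- $\hgt(x,y,a_3,b_3) = 4$ in (a), $\hgt(x,y,a_2,b_3) = 4$ in (b), $\hgt(a_3,a_4,b_3,b_4) = 4$ in (c) -- is precisely $\hgt I_1(\phi_4) \ge 4$ (equivalently, $\Ext^4_S(R, S)$ has annihilator of height $\ge 4$; cf.\ Proposition~\ref{annihilators:of:cohomology}), the two height-three conditions govern the $3 \times 3$ minors of $\phi_3$, and a containment $J^2 \subseteq I_3(\phi_2)$ for a suitable height-two ideal $J$ (e.g.\ $J = (x,y)$) supplies $\hgt I_3(\phi_2) \ge 2$. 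It is organizationally convenient to treat first the ``generic'' member of each family, in which the linear forms occurring are distinct variables, and then to note a posteriori -- via Corollary~\ref{Betti:table:preserved:implies:quotient:by:regular:sequence} -- that every other member is a specialization of the generic one by a regular sequence of linear forms; but this second observation is not logically needed for this half once the explicit complexes are in hand. All told, this part is routine, if a little lengthy because there are three families.

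\emph{Non-Koszulness, the two straightforward cases.} For two of the three forms, the minimal first syzygies of $I$ can be read directly off the matrix $\phi_2$ built above, and the plan is to check that they violate the necessary condition on the first syzygy map of a Koszul algebra defined by quadrics established in~\cite[2.8]{Koszul:ACI's}, which rules out Koszulness at once. These are the cases the introduction calls ``numerically indistinguishable from a Koszul algebra'': since all of (a)--(d) share the Betti table (iv), the Betti table itself cannot detect the obstruction, but the finer structure of $\phi_2$ can.

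\emph{Non-Koszulness, the hard case.} The remaining form -- for definiteness take form (c), whose generic member becomes, after identifying the occurring linear forms with variables, the symmetric algebra $R_0 = \Sym_A(M)$ of $M = \coker\bigl(A^4 \to A^2\bigr)$ over the smaller polynomial ring $A = \kk[a_3,a_4,b_3,b_4]$, the map being given by $\bigl(\begin{smallmatrix} b_3 & b_4 & a_3 & a_4 \\ 0 & 0 & b_3 & b_4 \end{smallmatrix}\bigr)$ -- is the one requiring the appendix, and the plan there is threefold. First, combining the Betti-table computation of the first half with Corollary~\ref{Betti:table:preserved:implies:quotient:by:regular:sequence} and Proposition~\ref{passing:Koszulness:to:and:from:quotients} -- deforming the given $I$ to the generic form-(c) ideal and then specializing, by regular sequences of linear forms, down to a model with as few variables as possible, Koszulness being preserved in both directions throughout -- one reduces to showing that a single small symmetric algebra of this type is not Koszul. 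Second, one exploits the natural bigrading on $\Sym_A(M)$ (internal degree together with symmetric-power degree) and the fact that $A \hookrightarrow \Sym_A(M)$ is an algebra retract, invoking the retract results of~\cite{HHO}, to replace the needed homological computation with a substantially smaller one. Third, one carries out that reduced computation to exhibit a nonzero $\beta^{R_0}_{i,j}(\kk)$ with $j > i$, contradicting Koszulness. The main obstacle is clearly this last step: even after the reduction through the bigrading and the retract, it is a genuine homological calculation rather than a soft argument -- which is exactly why it occupies the appendix -- whereas the three Buchsbaum--Eisenbud verifications of the first half and the two applications of~\cite[2.8]{Koszul:ACI's} are comparatively mechanical.
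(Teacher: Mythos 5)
Your high-level decomposition matches the paper's: establish the Betti table for each of (a)--(c), observe that for two of the three forms the first syzygy map violates the necessary condition of \cite[2.8]{Koszul:ACI's}, and defer the third form (which you correctly identify as (c), where the first syzygies \emph{are} spanned by linear and Koszul syzygies, so \cite[2.8]{Koszul:ACI's} is powerless) to the appendix. However, the computational routes diverge in two places that are worth flagging.

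For the Betti tables, you propose writing down explicit four-term complexes and invoking the Buchsbaum--Eisenbud acyclicity criterion directly, as the paper did for the simpler one-linear-syzygy case. The paper instead builds the resolution by linkage: in each case it sets $J$ equal to the ideal of the first three generators (which turns out to be Cohen--Macaulay of height $2$, hence has a Hilbert--Burch or complete-intersection-plus-one resolution), computes $(J:q_4)$ explicitly, lifts multiplication by $q_4$ to a chain map between the known resolutions of $S/J$ and $S/(J:q_4)$, and takes the mapping cone. This is not only shorter but also hands you the matrix $\phi_2$ needed for the \cite[2.8]{Koszul:ACI's} obstruction for free; your approach would also produce $\phi_2$, but you would then have to verify all the rank and height conditions by hand, and your suggestion that the three height hypotheses in each case ``are tailored exactly'' to the three nontrivial Buchsbaum--Eisenbud inequalities is only approximately right -- forms (b) and (c) each carry four height hypotheses, not three, so the correspondence is not one-to-one and would need some work to nail down.

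For the appendix case (c), your outline is conceptually sound but your explicit setup differs from the paper's in a way that matters for the size of the computation. You realize the \emph{generic} form-(c) algebra as $\Sym_A(M)$ with $A = \kk[a_3,a_4,b_3,b_4]$ a four-variable polynomial ring, and your $M = \coker\bigl(\begin{smallmatrix} b_3 & b_4 & a_3 & a_4 \\ 0 & 0 & b_3 & b_4\end{smallmatrix}\bigr)$ is indeed the correct presentation for that. The paper, however, first specializes the generic six-variable ring down to the four-variable ring $\kk[x,y,a,b]/(bx,\,xy,\,ax-by,\,x^2-y^2)$, and only then realizes it as $\Sym_{R'}(M)$ over the zero-dimensional ring $R' = \kk[x,y]/(x^2-y^2,xy)$, with $M$ a two-generated $R'$-module admitting a matrix-factorization-like periodic resolution. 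The retract there is $R \to R'$, reducing the question to whether the two-generated ideal $(a,b)$ has a linear resolution over the four-variable quotient $R$; in your setup you would be resolving $(x,y)$ over the full six-variable symmetric algebra, which starts with four linear relations instead of three and blows up faster. Finally, a small but real imprecision: once you have invoked the retract result of \cite{HHO}, the point is to exhibit a nonzero $\beta^{R}_{i,j}(R')$ (equivalently $\beta^{R_0}_{i,j}(A)$ in your notation) with $j > i$, not a nonzero $\beta^{R_0}_{i,j}(\kk)$; computing the latter would throw away precisely the savings that the retract was introduced to secure.
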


\begin{proof}
We explicitly construct the minimal free resolution of $R$ for each of the forms (a)--(c) described in Theorem \ref{2:linear:syzygies:case}.  In each case, we let $q_i = a_ix + b_iy$ for $i = 3, 4$, and we let $J$ be the ideal generated by the first three generators of $I$.

{\sc Case} (a):  Since $\hgt(x, b_3, b_4) = 3$ and $\hgt(x, y, a_3, b_3) = 4$, we see that $b_3x,q_3$ is a regular sequence so that
\[ J = I_2 \begin{pmatrix} b_3 & -x & 0 \\ a_3 & y & -x \end{pmatrix}, \]
is Cohen--Macaulay of height 2.  Hence, $S/J$ has a Hilbert-Burch resolution \cite[1.4.17]{Bruns:Herzog}, from which it is easily computed that $e(S/J) = 3$. As $J$ is unmixed, its associated primes are minimal. Any prime ideal $P$ containing $J$ must contain $(x, b_3)$ and $(x, y)$, and so, these must be the only associated primes of $J$.  Since it is easily seen that $JS_{(x, y)} = (x, y)S_{(x, y)}$, we know that $J = (x, y) \cap L$ for some $(x, b_3)$-primary ideal $L$, which has multiplicity $e(S/L) = 2$ by the associativity formula.  It follows from the height conditions that $q_4 \notin (x, b_3)$ so that
\[ (J : q_4) = ((x, y) : q_4) \cap (L : q_4) = (L : q_4) = L. \]
By \cite[10]{Engheta}, we know that $(x, b_3)^2 + (q_3)$ is also an $(x, b_3)$-primary ideal of multiplicity 2, which is contained in $(J : q_4)$ since $b_3^2q_4 = \Delta b_3x + (b_3b_4)q_3$ where $\Delta = a_3b_4 - b_3a_4$. Thus, $(J : q_4) = L = (x, b_3)^2 + (q_3)$ by \cite[8]{Engheta}.  

Since the minimal free resolutions of $S/(J : q_4)$ is well-known (see \cite[10]{Engheta}), we can lift the map ${S/(J : q_4)(-2) \stackrel{q_4}{\to} S/J}$ to a chain map between their minimal free resolutions:
\[
\scalebox{0.9}{
\begin{tikzcd}[ampersand replacement = \&, column sep = 2.5 cm, row sep = 1.5 cm]
0 \rar \& 
S(-6) \rar{\left(\begin{smallmatrix} a_3 \\ y \\ -b_3 \\ x \end{smallmatrix}\right)} \& 
S(-5)^4 \rar{\left(\begin{smallmatrix} b_3 & 0 & a_3 & 0 \\ -x & b_3 & y & a_3 \\ 0 & -x & 0 & y \\ 0 & 0 & -x & - b_3 \end{smallmatrix}\right)} \dar[swap]{\left(\begin{smallmatrix} q_4 & -a_3b_4 & 0 & -a_3a_4 \\ 0 & -b_3b_4 & -q_4 & -b_3a_4 \end{smallmatrix}\right)} \& 
S(-4)^4 \rar{\left(\begin{smallmatrix} x^2 & b_3x & b_3^2 & q_3 \end{smallmatrix}\right)} \dar{\left(\begin{smallmatrix} q_4 & 0 & 0 & 0 \\ 0 & q_4 & -\Delta & 0 \\ 0 & 0 & b_3b_4 & q_4 \end{smallmatrix}\right)} \& 
S(-2) \dar{q_4}
\\
\& 
0 \rar \& 
S(-3)^2 \rar{\left(\begin{smallmatrix} b_3 & a_3 \\ -x & y \\ 0 & -x \end{smallmatrix}\right)} \& 
S(-2)^3 \rar{\left(\begin{smallmatrix} x^2 & b_3x & q_3 \end{smallmatrix}\right)} \& 
S
\end{tikzcd}
}
\]
Taking the mapping cone of this chain map yields the minimal free resolution of $R$.  In particular, we see that $\syz_1^S(I)$ is minimally generated by the columns of the matrix
\[ \begin{pmatrix} b_3 & a_3 & q_4 & 0 & 0 & 0 \\ -x & y & 0 & q_4 & -\Delta & 0 \\ 0 & -x & 0 & 0 & b_3b_4 & q_4 \\ 0 & 0 & -x^2 & -b_3x & -b_3^2 & -q_3 \end{pmatrix}. \]
If $R$ were Koszul, then $\syz_1^S(I)$ would be minimally generated by linear syzygies and Koszul syzygies on the minimal generators of $I$ by \cite[2.8]{Koszul:ACI's}, but in this case, the linear and Koszul syzygies are spanned by all but the fifth column of the above matrix so that $R$ cannot be Koszul.

{\sc Case} (b):  Since $\hgt(x, y, a_2, b_3) = 4$, we see that $J = (x, y) \cap (x, b_3) \cap (y, a_2)$ so that $J$ is a Cohen--Macaulay ideal of $\hgt J = 2$.  Clearly $q_4 \in (x, y)$. On the other hand, since $\hgt (x, b_3, b_4) = \hgt (x, y, b_3) = \hgt(x, a_2, a_4) = \hgt (x, y, a_2) = 3$, we see that $q_4 \notin (x, b_3) \cup  (y, a_2)$. Thus $(J : q_4) = (x, b_3) \cap (y, a_2)$.

Now, the ring $S/J$ has a Hilbert-Burch resolution, and the resolution of ${S/(J : q_4)}$ is easily constructed using the fact that $\hgt(x, y, a_2, b_3) = 4$ (see for example \cite[3.3]{Koszul:algebras:defined:by:4:quadrics}). We lift the map ${S/(J : q_4)(-2) \stackrel{q_4}{\to} S/J}$ to a chain map between their minimal free resolutions as follows
\[
\scalebox{0.9}{
\begin{tikzcd}[ampersand replacement = \&, column sep = 2.5 cm, row sep = 1.5 cm]
0 \rar \& 
S(-6) \rar{\left(\begin{smallmatrix} b_3 \\ -a_2 \\ -y \\ x \end{smallmatrix}\right)} \& 
S(-5)^4 \rar{\left(\begin{smallmatrix} -a_2 & -b_3 & 0 & 0 \\ y & 0 & b_3 & 0 \\ 0 & x & 0 & a_2 \\ 0 & 0 & -x & - y \end{smallmatrix}\right)} \dar[swap]{\left(\begin{smallmatrix} q_4 & 0 & b_3b_4 & -a_4b_3 \\ 0 & q_4 & -a_2b_4 & a_2a_4 \end{smallmatrix}\right)} \& 
S(-4)^4 \rar{\left(\begin{smallmatrix} xy & a_2x & b_3y & a_2b_3 \end{smallmatrix}\right)} \dar{\left(\begin{smallmatrix} q_4 & 0 & 0 & 0 \\ 0 & q_4 & 0 & a_4b_3 \\ 0 & 0 & q_4 & a_2b_4 \end{smallmatrix}\right)} \& 
S(-2) \dar{q_4}
\\
\& 
0 \rar \& 
S(-3)^2 \rar{\left(\begin{smallmatrix} -a_2 & -b_3 \\ y & 0 \\ 0 & x \end{smallmatrix}\right)} \& 
S(-2)^3 \rar{\left(\begin{smallmatrix} xy & a_2x & b_3y \end{smallmatrix}\right)} \& 
S
\end{tikzcd}
}
\]
and taking the mapping cone of this chain map yields the minimal free resolution of $R$.  In particular,  $\syz_1^S(I)$ is minimally generated by the columns of the matrix
\[ \begin{pmatrix} -a_2 & -b_3 & q_4 & 0 & 0 & 0 \\ y & 0 & 0 & q_4 & 0 & a_4b_3 \\ 0 & x & 0 & 0 & q_4 & a_2b_4 \\ 0 & 0 & -xy & -a_2x & -b_3y & -a_2b_3 \end{pmatrix}. \]
As above, the linear and Koszul syzygies are spanned by only the first five columns of the above matrix so that $R$ cannot be Koszul by \cite[2.8]{Koszul:ACI's}.

{\sc Case} (c):  After a suitable change of generators for $I$, we may assume that $\hgt(x, a_3, b_3, b_4) = 4$.  This is clear if $\hgt (x, a_3, a_4, b_3, b_4) = 5$.  Otherwise, we must have $\hgt(x, a_3, a_4, b_3, b_4) = 4$ since $\hgt (a_3, a_4, b_3, b_4) = 4$.  In that case, since $\hgt(x, b_3, b_4) = 3$, we may assume after possibly interchanging $q_3$ and $q_4$ that $a_4 \in (x, a_3, b_3, b_4)$ and $\hgt(x, a_3, b_3, b_4) = 4$ as claimed.  

Since $\hgt(x, y) = 2$ and $\hgt(x, a_3, b_3, b_4) = \hgt(a_3, a_4, b_3, b_4) = 4$, it is clear that $q_3$ is a nonzerodivisor modulo $(b_3x, b_4x) = (x) \cap (b_3, b_4)$.  Hence, the minimal free resolution of $S/J$ has the form
\[ 
0 \longto S(-5) \longto S(-4)^2 \oplus S(-3) \longto S(-2)^3 \longto S \longto S/J \longto 0.
\]

Next, we claim that $(J: q_4) = (b_3, b_4x, q_3) = (b_3, b_4x, a_3x)$.  Indeed, it is clear that the colon contains the latter ideal.  If $q_4f \in (b_3x, b_4x, q_3)$, then $b_4yf \equiv b_3yg \pmod{x}$ for some $g \in S$ so that $f \in (b_3, x)$ as $\hgt(x, y) = 2$ and $\hgt(x, b_3, b_4) = 3$.  Similarly, we see that $a_4xf \equiv 0 \pmod{(b_3, b_4, a_3)}$ so that $f \in (b_3, b_4, a_3)$ as $\hgt(a_3, a_4, b_3, b_4) = \hgt (x, a_3, b_3, b_4) = 4$.  And so, it follows that $(J : q_4) \subseteq (b_3, x) \cap (b_3, b_4, a_3) = (b_3, b_4x, a_3x)$.  As $b_3$ is a nonzerodivisor modulo $(b_4x, a_3x)$, we see that the minimal free resolution of $S/(J: q_4)$ has the form
\[
0 \longto S(-4) \longto S(-3)^3 \longto S(-2)^2 \oplus S(-1) \longto  S\longto S/(J : q_4) \longto 0.
\]
We obtain a free resolution of $R$ by taking the mapping cone of the chain map lifting multiplication by $q_4$ in the short exact sequence
\[
0 \longto S/(J:q_4)(-2) \stackrel{q_4}{\longto} S/J \longto R \longto 0.
\]
The degrees of the free modules in the resolutions of $S/J$ and $S/(J : q_4)$ force this lift to have entries of positive degree.   Hence, the resulting free resolution is minimal, and $R$ has the desired Betti table.  However, unlike cases (a) and (b) above, in this case the first syzygies of $I$ are minimally generated by linear syzygies and Koszul syzygies on the minimal generators of $I$, making it more difficult to prove that $R$ is not Koszul. We defer the proof of this fact to Appendix \ref{bad:non-Koszul:algebra} at the end of this paper.
\end{proof}

Our approach to the remaining parts of the proof of Theorem \ref{2:linear:syzygies:case} is based around an analysis of the matrix of linear first syzygies on the defining ideal $I$.

\begin{lemma}
Suppose $R = S/I$ has Betti table:
\begin{center}
\textnormal{
\begin{tabular}{c|cccccccc}
  & 0 & 1 & 2 & 3 & 4 \\ 
\hline 
0 & 1 & -- & --  & -- & -- \\ 
1 & -- & 4 & 2  & -- & -- \\
2 & -- & -- & 4 & 4 & 1 
\end{tabular}
}
\end{center}
Then:
\begin{enumerate}[label = \textnormal{(\alph*)}]
\item There is a unique height two linear prime $P$ containing $I$, and $(I : P)$ is nondegenerate.
\item The matrix of linear syzygies of $I$ has a generalized zero.
\item If the coordinates of any linear syzygy $\ell = (\ell_1, \ell_2, \ell_3, \ell_4)$ of $I$ generate an order ideal of height 3, then the matrix of linear syzygies of $I$ has a generalized row which is zero.
\end{enumerate}
\end{lemma}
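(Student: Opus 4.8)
The plan is to treat the three parts in turn. Throughout, write $M$ for the $4\times 2$ matrix of linear forms that is the block of $\phi_2$ sending $S(-3)^2$ into $S(-2)^4$ in the minimal free resolution
\[
0 \to S(-6) \xrightarrow{\phi_4} S(-5)^4 \xrightarrow{\phi_3} S(-3)^2 \oplus S(-4)^4 \xrightarrow{\phi_2} S(-2)^4 \xrightarrow{\phi_1} S
\]
forced on $R$ by the assumed Betti table, with $\phi_1 = (f_1\ f_2\ f_3\ f_4)$ listing the minimal quadric generators of $I$; the columns of $M$ form a $k$-basis of the linear first syzygies of $I$, and $\phi_1 M = 0$. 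For (a), computing the Hilbert series from the Betti table gives $e(R) = 1$ and $\hgt I = 2$, so the associativity formula for multiplicity \cite[4.7.8]{Bruns:Herzog} shows $I$ lies in a unique minimal prime $P$ of height two, which must have $e(S/P) = 1$ --- hence is a linear prime $P = (x, y)$ --- with $R_P$ of length $1$; thus $P$ is the $P$-primary component of $I$. For the nondegeneracy of $(I : P)$ I would argue by contradiction: a nonzero linear form $\ell \in (I : P)$ gives $\ell x, \ell y \in I$, hence $I = (\ell x, \ell y, q_3, q_4)$ with $q_3, q_4 \in P$; a direct analysis of such ideals --- distinguishing the cases $\ell \notin P$ and $\ell \in P$, and using the uniqueness of $P$ together with the fact that $I$ is minimally four-generated --- shows that the Hilbert series of $R$ can then never equal $(1 + t - t^2)^2/(1-t)^{\dim S - 2}$, the value that Betti table (iv) forces.

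For (b), suppose $M$ were $1$-generic. Since $\phi_1 M = 0$, the column vector $\mathbf f = (f_1, f_2, f_3, f_4)^{\T}$ is a syzygy of the $2 \times 4$ matrix $M^{\T}$, i.e.\ $\mathbf f \in \ker\bigl(M^{\T}\colon S(-1)^4 \to S^2\bigr)$, lying in degree $3$ (the degree where its entries are quadrics). By Eisenbud's theorem on $1$-generic matrices $\hgt I_2(M) \ge 3$, so $\hgt I_2(M^{\T}) \ge 4 - 2 + 1$ and the Buchsbaum--Rim complex of $M^{\T}$ is a resolution of $\coker M^{\T}$; in particular $\ker M^{\T}$ is generated by the $\binom{4}{3} = 4$ Cramer-type syzygies coming from its $\bigwedge^3 S^4$ term, each of which has degree $3$ and entries equal (up to sign) to the $2\times 2$ minors of $M$. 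Since $\ker M^{\T}$ is generated by these four elements, its degree-$3$ piece --- which contains $\mathbf f$ --- is their $k$-span, so every $f_i \in I_2(M)$, that is $I \subseteq I_2(M)$. But then $2 = \hgt I \ge \hgt I_2(M) \ge 3$, a contradiction.

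For (c), let $\ell = (\ell_1, \ell_2, \ell_3, \ell_4)$ be a linear syzygy whose order ideal has height three, so the $\ell_i$ span a three-dimensional space. After a change of coordinates on $S$ and a change of generators of $I$ (the latter a row operation on $M$) I would reduce to $\ell = (x_1, x_2, x_3, 0)$ with $x_1, x_2, x_3$ independent variables; then the new first three generators $g_1, g_2, g_3$ lie in $\mathfrak p_0 = (x_1, x_2, x_3)$ and satisfy $x_1 g_1 + x_2 g_2 + x_3 g_3 = 0$, and a short argument from minimal four-generation shows $J := (g_1, g_2, g_3)$ is a height-two Cohen--Macaulay ideal whose Hilbert--Burch matrix has $(x_1, x_2, x_3)^{\T}$ as a column. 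An elementary linear-algebra computation shows that $M$ has a zero generalized row if and only if the fourth coordinate of the second linear syzygy vanishes in this basis, which in turn is equivalent to $(J : g_4)$ containing no linear form. To finish, I would assume $(J : g_4)$ contains a linear form $t$ and compute the minimal free resolution of $R$ as the mapping cone of $S/(J : g_4)(-2) \xrightarrow{g_4} S/J$; tracking the graded free modules --- using the Hilbert--Burch shape of the resolution of $S/J$ and the input of part (a) --- forces a graded Betti number of $R$ outside those allowed by table (iv), the desired contradiction. I expect this last step to be the main obstacle: the mapping-cone bookkeeping splits into several sub-cases (according to whether $g_4$, and the putative $t$, are zerodivisors modulo $J$, and to the exact shape of $J : g_4$), and the delicate point is verifying that the extra syzygies the cone produces cannot cancel, so that they really do violate the prescribed Betti table. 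Parts (a) and (b), by contrast, are comparatively routine --- (b) once the Buchsbaum--Rim mechanism is noticed, and (a)'s nondegeneracy clause modulo a fiddly but elementary case analysis.
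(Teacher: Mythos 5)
Your proposal has genuine gaps in all three parts, and in part (b) a step that is simply wrong.

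\textbf{Part (b) — a backwards inequality.}  After correctly deducing $I \subseteq I_2(M)$ from the Buchsbaum--Rim resolution of $\coker M^{\T}$, you conclude ``$2 = \hgt I \ge \hgt I_2(M) \ge 3$.''  This is false: containment $I \subseteq I_2(M)$ gives $\hgt I \le \hgt I_2(M)$, not the reverse, and $2 \le 3$ is no contradiction.  The paper closes the gap by a different route: since $M$ is $1$-generic, $I_2(M)$ is a \emph{prime} ideal of height $3$ generated by quadrics; on the other hand, $I$ contains a reducible quadric (\cite[4.7]{projective:dimension:of:height:2:ideals:of:quadrics} together with \cite[4.3]{Koszul:algebras:defined:by:4:quadrics}), and a prime ideal containing a product of two linear forms must contain one of the factors — forcing $I_2(M)$ to contain a linear form, which is impossible for an ideal generated by quadrics.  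Without the reducibility input and the primality argument, the containment $I \subseteq I_2(M)$ alone yields nothing.

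\textbf{Part (a) — the nondegeneracy claim is asserted, not proved.}  You reduce to $I = (\ell x, \ell y, q_3, q_4)$ with $q_3, q_4 \in P = (x,y)$, but then only gesture at ``a direct analysis'' of Hilbert series.  This is the actual content of the claim and you do not carry it out.  The paper's argument is much more direct and avoids Hilbert series altogether: writing $q_3 = a_3x + b_3 y$, $q_4 = a_4 x + b_4 y$, the three columns
\[
\begin{pmatrix} y \\ -x \\ 0 \\ 0 \end{pmatrix}, \quad
\begin{pmatrix} a_3 \\ b_3 \\ -\ell \\ 0 \end{pmatrix}, \quad
\begin{pmatrix} a_4 \\ b_4 \\ 0 \\ -\ell \end{pmatrix}
\]
are $\kk$-linearly independent linear syzygies of $I$ (since $\ell \neq 0$), contradicting $\beta_{2,3}^S(R) = 2$ in the assumed Betti table.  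This simultaneously dispatches both cases $\ell \in P$ and $\ell \notin P$; your proposed split into sub-cases and Hilbert-series comparison is unnecessary and, as written, unsubstantiated.

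\textbf{Part (c) — you reach for a mapping-cone argument that you acknowledge you cannot complete, when a short direct argument suffices.}  The paper's proof is elementary: after normalizing $\ell_4 = 0$ with $\hgt(\ell_1, \ell_2, \ell_3) = 3$, the relation $\ell_1 q_1 + \ell_2 q_2 + \ell_3 q_3 = 0$ exhibits $(q_1, q_2, q_3)$ as a syzygy of the regular sequence $\ell_1, \ell_2, \ell_3$, so by exactness of the Koszul complex there exist linear forms $u_1, u_2, u_3$ expressing $(q_1,q_2,q_3)$ as a Koszul combination; this realizes $(q_1, q_2, q_3) = I_2(M)$ for a $2 \times 3$ matrix $M$ of linear forms.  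The height of $I_2(M)$ cannot be $1$ (else three of the four quadric generators would be divisible by a common linear form, contradicting the Betti table by \cite[3.3]{Koszul:algebras:defined:by:4:quadrics}), so $\hgt I_2(M) = 2$ and Hilbert--Burch identifies the rows of $M$ as the two independent linear syzygies of $I$.  No mapping cone, no case analysis on $(J:g_4)$, and crucially no unresolved ``bookkeeping'' is required.  The mapping-cone approach you sketch is how the paper proves the much harder Lemma \ref{non-Koszul:2:linear:syzygies:case} (computing full Betti tables of the non-Koszul forms), not this lemma, and you should not expect it to be the right level of machinery for the present statement.
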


\begin{proof}
Let $q_1, \dots, q_4$ denote the minimal quadratic generators of $I$.

(a)  It is easily computed that $e(R) = 1$ and $\hgt I = 2$.  Hence, there is a unique height two prime $P$ containing $I$, which is necessarily a linear prime.  If $(I: P)$ is degenerate, then we can write $I = (xz, yz, a_3x+b_3y, a_4x+b_4y)$ for some linear forms $x, y, z, a_i, b_i$ with $P = (x, y)$.  However, in that case, the columns of the matrix
\[
\begin{pmatrix}
y & a_3 & a_4\\
-x & b_3 & b_4 \\
0 & -z & 0 \\
0 & 0 & -z
\end{pmatrix}
\]
are 3 linearly independent linear syzygies on $I$, contradicting the assumption on the Betti table of $R$.

(b) Suppose the $4 \times 2$ matrix $M$ of linear syzygies of $I$ is 1-generic.  Then the vector $q = (q_1, q_2, q_3, q_4) \in S(-1)^4$ is a syzygy of the module $\coker M^\T$. Since $M$ is $1$-generic, then $I_2(M)$ is a prime ideal of height 3 generated by quadrics \cite[6.4]{geometry:of:syzygies}, and the minimal free resolution of $\coker M^\T$ is given by a Buchsbaum-Rim complex \cite[A2.13]{Eisenbud}.  It follows from the description of the second differential of the Buchsbaum-Rim complex that $I \subseteq I_2(M)$.  However, $I$ contains a reducible quadric by {\cite[4.7]{projective:dimension:of:height:2:ideals:of:quadrics}} and \cite[4.3]{Koszul:algebras:defined:by:4:quadrics}. Since $I_2(M)$ is prime, then $I_2(M)$ contains a linear form, which is impossible.

(c)  After a suitable change of generators for $I$, we may assume that $\ell_4 = 0$ and $\hgt(\ell_1, \ell_2, \ell_3) = 3$.  We can then write 
\[
\begin{pmatrix} q_1 \\ q_2 \\ q_3 \end{pmatrix} =
\begin{pmatrix} 
\ell_2 & \ell_3 & 0 \\
-\ell_1 & 0 & \ell_3 \\
0 & -\ell_1 & -\ell_2
\end{pmatrix}
\begin{pmatrix} u_1 \\ u_2 \\ u_3 \end{pmatrix} =
\begin{pmatrix} \ell_2u_1 + \ell_3u_2 \\ -\ell_1u_1 + \ell_3u_3 \\ -\ell_1u_2 -\ell_2u_3 \end{pmatrix}
\]
for some linear forms $u_i$ by viewing the $q_j$'s as a syzygy on the $\ell_j$'s.  But then $I \supseteq (q_1, q_2, q_3) = I_2(M)$, where
\[ M = \begin{pmatrix} \ell_1 & \ell_2 & \ell_3 \\ u_3 &  -u_2 & u_1 \end{pmatrix}. \] 
If $I_2(M) = (q_1, q_2, q_3)$ has height one, then $R$ does not have the assumed Betti table by \cite[3.3]{Koszul:algebras:defined:by:4:quadrics}.  Hence, $I_2(M)$ has height 2 so that the rows of $M$ represent the two independent linear syzygies of $I$ by \cite[1.4.17]{Bruns:Herzog}.
\end{proof}

\begin{proof}[Proof of Theorem \ref{2:linear:syzygies:case}]
By Lemmas \ref{transversal:case} and \ref{non-Koszul:2:linear:syzygies:case}, it remains only to prove that, if $R$ has the given Betti table, then $I$ has one of the forms (a)--(d).  As in the preceding lemma, we see that $e(R) = 1$ and $\hgt I = 2$, and we let $P$ denote the unique height two linear prime containing $I$.   We consider two cases based on whether the matrix of linear syzygies of $I$ has a generalized row that is zero.  This is precisely the distinction between the possible forms (a)--(b) versus (c)--(d) listed in the theorem.

{\sc Case 1:} Suppose the matrix of linear syzygies of $I$ has a generalized row which is zero.  We can then choose generators $q_1, \dots, q_4$ for $I$ such that $J = (q_1, q_2, q_3)$ has exactly two independent linear syzygies.  It then follows from \cite[4.2]{Koszul:ACI's} that $J = I_2(M)$ for some $3 \times 2$ matrix $M$ of linear forms.  The ideal $J$ is not prime because it is strictly contained in the height two linear prime $P$, so the matrix $M$ is not 1-generic by \cite[6.4]{geometry:of:syzygies}, and thus $M$ has at least one generalized zero. We may then assume 
\[ 
M = \begin{pmatrix}
b & d \\
-a & c \\
0 & -x
\end{pmatrix}
\]
for some linear forms $a, b, c, d, x$.  In particular, we see that $\hgt(a, b) = 2$ and $x \neq 0$ as $J = (ax, bx, ad + bc)$ is minimally generated by 3 quadrics. 

Consider the linear prime $(P,a,b)$, which must have height between 2 and 4. We claim that $\hgt (P,a,b)=3$.  If we have $\hgt(P, a, b) = 2$, then $P = (a, b)$ so that $x \in (J : P) \subseteq (I : P)$, contradicting that the latter ideal is nondegenerate.  In particular, this would be the case if $x \notin P$, so we must have $x \in P$.  Let $y$ be any linear form such that $P = (x, y)$.  If $\hgt(P, a, b) = 4$, then $ad + bc \in J \subseteq P$ implies that 
\[ \begin{pmatrix} d \\ c \end{pmatrix} \equiv \lambda \begin{pmatrix} b \\ -a \end{pmatrix} \pmod{P} \]
for some some $\lambda \in \kk$.  Hence, after a suitable change of generators for $J$, we may assume $J = (ax, bx, zy)$ for some nonzero linear form $z \in (a, b)$.  But then $z \in (I : P)$, which is a contradiction.  Therefore, $\hgt(P, a, b) = 3$, and without loss of generality, we may assume that $\hgt(P, b) = 3$ and that $a \in P$.  As $ad + bc \in P$, this implies that $c \in P$.  Note also that $(a, c) \nsubseteq (x)$ or else we would have $\hgt J = 1$.  

If $a \notin (x)$, then $P = (x, a)$, and after a suitable change of generators for $J$ we may assume that $c = 0$ so that $J = (ax, bx, ad)$.  Moreover, we note that $\hgt(x, a, b, d) = 4$.  Indeed, if this were not the case, we would have $d \in (x, a, b) = (P, b)$ so that, after a change of generators for $J$, we may assume that $d \in (a, b)$.  But then $d \in (J : P) \subseteq (I: P)$, which is a contradiction.  Relabeling $x, a, b, d$ as $x, y, a_2, b_3$ respectively and writing the last generator of $I$ as $a_4x + b_4y$, we see that $I$ has the form (b) above.  It remains only to check that $\hgt(x, b_3, b_4) = \hgt (y, a_2, a_4) = 3$.  If $\hgt(x, b_3, b_4) < 3$, then $b_4 \in (x, b_3)$ so that after a suitable change of generators we may assume that $b_4 = 0$.  But then $R$ does not have the assumed Betti table by \cite[3.3]{Koszul:algebras:defined:by:4:quadrics}.  Hence, we must have $\hgt(x, b_3, b_4) = 3$, and similarly, we see that $\hgt(y, a_2, a_4) = 3$. 

On the other hand, if $a \in (x)$, then we must have $P = (x, c)$ so that $J = (x^2, bx, dx + bc)$.  Similarly, we must have $\hgt( x, b, c, d) = 4$.  Again, if this were not the case, we would have $d \in (x, b, c) = (P, b)$ so that, after a change of generators for $J$, we may assume that $d \in (c)$ and, hence, we have $b + \lambda x \in (J: P)$ for some $\lambda \in \kk$, which is a contradiction.  Relabeling $x, b, c, d$ as $x, b_3, y, a_3$ respectively and writing the last generator of $I$ as $a_4x + b_4y$, we see that $I$ has the form (a), where $\hgt(x, b_3, b_4) = 3$ by arguing as above. 

{\sc Case 2:}  Suppose $I$ does not have a generalized row that is zero.  By the preceding lemma, the matrix of linear syzygies of $I$ has a generalized zero.  Hence, $I$ has a nonzero linear syzygy whose order ideal has height at most 3, and so, its order ideal is easily seen to have height two by our assumption and the preceding lemma.  Consequently, after a suitable change of generators for $I$, the matrix of linear syzygies of $I$ has the form
\[
M = \begin{pmatrix}
d & s \\ -c & u \\ 0 & v \\ 0 & -w
\end{pmatrix}
\]
for some linear forms with $\hgt(c, d) = 2$.  If $q_1, \dots, q_4$ denote the corresponding generators of $I$, then $dq_1 - cq_2 = 0$ implies $q_1 = cx$ and $q_2 = dx$ for some nonzero linear form $x$.  We see that $x \in P$, since otherwise we would have $P=(c, d)$ so that $x \in (I : P)$, contrary to the preceding lemma.  Note also that $\hgt(v, w) = 2$ by our assumption.  

If $\hgt (s, u, v, w) = 2$, then $s, u \in (v, w)$.  Hence, after a suitable change of generators for $I$, we may assume that $s = u = 0$.  In that case, arguing as above shows that $q_3 = wy$ and $q_4 = vy$ for some linear form $y \in P$.  We note that $P = (x, y)$ has height two since $\hgt I = 2$.  Thus, $I$ has the form (d) by Lemma \ref{transversal:case}, and we may assume that $\hgt(s, u, v, w) = 4$ for the remainder of the proof.

Let $y$ be any linear form such that $P = (x, y)$, and write $q_i = a_ix + b_iy$ for some linear forms $a_i, b_i$ for $i = 3, 4$.  We claim that $\hgt (x, v, w) = 3$.  If not, then after a suitable change of generators for $I$, we may assume that $x = w$ and $\hgt(x, v) = 2$.  Then, considering the syzygy represented by the second column of $M$ modulo $x$, we have $vb_3y \equiv 0 \pmod{x}$ so that $b_3 \equiv 0 \pmod{x}$.  Hence, $(q_1, q_2, q_3) \subseteq (x)$ so that $R$ cannot have the assumed Betti table by \cite[3.3]{Koszul:algebras:defined:by:4:quadrics}.  

Since $\hgt(x, v, w) = 3$, considering the same relation modulo $(v, w)$ yields $scx + udx \equiv 0 \pmod{(v, w)}$ so that $sc + ud \equiv 0 \pmod{(v, w)}$.  Since $\hgt (s, u, v, w) = 4$, we can write
\[
\begin{pmatrix} c \\ d \end{pmatrix} \equiv
\lambda\begin{pmatrix} -u \\ s \end{pmatrix}  \pmod{(v, w)}
\]
for some $\lambda \in \kk$.  If $\lambda \neq 0$, then after subtracting $\lambda^{-1}$ times the first column of $M$ from the second, we may assume that $\hgt(s, u, v, w) = 2$.  And so, as above, we see that $I$ has form (d).   Hence, we may assume that $\lambda = 0$.  

In that case, we have $c, d \in (v, w)$ so that $(c, d) = (v, w)$ as $\hgt (c, d) = 2$.  Then, after a suitable change of generators for $I$, we may assume that $v = d$ and $w = -c$.  Then, considering the syzygy represented by the second column of $M$ modulo $x$, we have $db_3y - cb_4y \equiv 0 \pmod{x}$ so that $db_3 - cb_4 \equiv 0 \pmod{x}$ as $\hgt(x, y) = 2$ and
\[
\begin{pmatrix} -b_4 \\ b_3 \end{pmatrix} \equiv
\alpha\begin{pmatrix} -d \\ c \end{pmatrix}  \pmod{x}
\]
as $\hgt(x, c, d) = 3$.  If $\alpha = 0$, then $I \subseteq (x)$, contradicting that $\hgt I = 2$. Hence, $\alpha \neq 0$, and after replacing $a_3, a_4$ with different linear forms, we may assume that $b_3 = c$ and $b_4 = d$.  Then the syzygy represented by the second column of $M$ is
\[ 0 = sb_3x + ub_4x + b_4q_3 -b_3q_4 = (s - a_4)b_3x + (u + a_3)b_4x \]
so that $0 = (s - a_4)b_3 + (u + a_3)b_4$.  Since $\hgt(b_3, b_4) = 2$, we finally see that
\[
\begin{pmatrix} a_4 \\ -a_3 \end{pmatrix} =
\begin{pmatrix} s \\ u \end{pmatrix} + \beta\begin{pmatrix} b_4 \\ -b_3 \end{pmatrix}
\]
for some $\beta \in \kk$ so that $(a_3, a_4, b_3, b_4) = (s, u, b_3, b_4)$ has height four.  If $\hgt (q_3, q_4) = 1$, then arguing as above shows that $I$ has form (d), and otherwise, $\hgt(q_3, q_4) = 2$ so that $I$ has form (c) of the theorem.
\end{proof}

\begin{proof}[Proof of Theorem \ref{LG-quadratic}] 
Suppose $R = S/I$ is a Koszul algebra defined by $g \leq 4$ quadrics.  It is known that $R$ is LG-quadratic when $\hgt I = g$ \cite[1.2.5]{Caviglia:PhD:thesis} and when $\hgt I = g - 1$ \cite[3.4]{Koszul:ACI's}.  

If $\hgt I = 1$, then $I = zJ$ for some linear form $z$ and linear complete intersection $J$ so that the Betti numbers of $R$ depend only on the number $g$ of minimal generators of $I$.  By Corollary \ref{Betti:table:preserved:implies:quotient:by:regular:sequence}, any particular height one ideal must be obtained by specializing the generic case where $\hgt(z, J) = g + 1$ by a regular sequence of linear forms, and in the generic case, $I$ is a monomial ideal after suitable change of coordinates and, thus, is G-quadratic.  

Hence, it only remains to see that $R$ is LG-quadratic when $g = 4$ and $\hgt I = 2$.  We consider the possible forms for $I$ described by Theorem \ref{structure:theorem}.  In cases (i) and (ii), $R$ is LG-quadratic by \cite[3.5]{Koszul:algebras:defined:by:4:quadrics}.  In cases (iii) and (iv), this follows from Corollary \ref{1:linear:syzygy:case:is:LG-quadratic} and Proposition \ref{transveral:height:1:ideals:are:LG-quadratic} respectively.
\end{proof}

\section{Applications to the Backelin--Roos Property}  \label{Backelin-Roos:property:section}

As an application of our structure theorem, we prove that all Koszul algebras defined by $g \leq 4$ quadrics, except the case where $I$ is the sum of two transversal height one ideals, have the Backelin-Roos property.  Recall that $R$ has the \emph{Backelin--Roos property} if there exists a surjective Golod homomorphism $\phi: Q \to R$ of standard graded $\kk$-algebras where $Q$ is a complete intersection; the ring $R$ is \emph{absolutely Koszul} if every finitely generated graded $R$-module has finite linearity defect (see \cite{absolutely:Koszul:algebras} for further details). When $R$ is Koszul, it is well-known that
\begin{center}
\begin{tikzcd}
\text{Backelin--Roos property} \rar[Rightarrow] & \text{Absolutely Koszul}
\end{tikzcd}
\end{center}
and it is an open question whether the two notions are equivalent for Koszul algebras up to field extensions \cite[p.~354]{absolutely:Koszul:algebras}.  Using our structure theorem, we give an affirmative answer to this question for Koszul algebras defined by $g \leq 4$ quadrics in characteristic zero.

In the following theorem, we make use of certain properties of algebra retracts.  Recall that a map of standard graded $\kk$-algebras $\phi: R \to R'$ is an \emph{algebra retract} if there exists a $\kk$-algebra map $\sigma: R' \to R$ such that $\phi \circ \sigma = \Id_{R'}$.

\begin{thm}
Let $R = S/I$ be a Koszul algebra defined by $g \leq 4$ quadrics over an algebraically closed field $\kk$. Then:
\begin{enumerate}[label = \textnormal{(\alph*)}] 
\item $R$ has the Backelin-Roos property except possibly when $I$ is the sum of two transversal height one ideals each generated by two quadrics.
\item If $I$ is the sum of two transversal height one ideals each generated by two quadrics and $\ch(\kk) = 0$, then $R$ is not even absolutely Koszul. 
\end{enumerate}
\end{thm}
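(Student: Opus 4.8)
Our plan for (a) is a case analysis over the classification of Koszul algebras $R=S/I$ defined by $g\le 4$ quadrics. If $\hgt I = g$, then $R$ is a complete intersection and $Q=R$ works trivially. If $\hgt I = g-1$, then $R$ is a Koszul almost complete intersection, and the structure theorems of \cite{Koszul:ACI's} present such a ring as a Golod quotient of a complete intersection, so it has the Backelin--Roos property. In every remaining case $\hgt I \le g-2$, and Theorem \ref{structure:theorem} (together with the classification for $g\le 3$) leaves two possibilities: $I=x\a$ for a linear complete intersection $\a$ when $\hgt I =1$, or one of the four families of height-two ideals in Theorem \ref{thm:main}. When $I=x\a$, the forms $x,\a$ form a regular sequence, so $x\a=(x)\cap\a$; after killing a maximal regular sequence of variables not occurring in the generators, $R$ becomes a fiber product of two polynomial rings over $\kk$, which is a Golod ring, and Golodness is recovered along the linear regular sequence, so $R$ is Golod and has the Backelin--Roos property with $Q=S$. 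The same fiber-product argument disposes of $(x,y)\cap(z,w)$ in Theorem \ref{thm:main}(i).

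For the remaining families in Theorem \ref{thm:main}(i)--(iii), the plan is to choose a complete intersection $\mathfrak{c}$ of at most two quadrics inside $I$ -- e.g.\ $\mathfrak{c}=(x^2,y^2)\subseteq(x,y)^2+(xz+yw)$, $\mathfrak{c}=(xy,q)\subseteq(xy,xz,xw,q)$, $\mathfrak{c}=(q_3,q_4)\subseteq(xz,yz,q_3,q_4)$ -- and to verify that $S/\mathfrak{c}\to R$ is Golod. (Such a $\mathfrak{c}$ is genuinely needed: if $R$ has Betti table (ii) or (iii) of Theorem \ref{Koszul:4:quadric:height:2:Betti:tables}, then $\Tor_2^S(R,\kk)$ is not concentrated in a single internal degree, which precludes $R$ from being simultaneously Golod and Koszul.) Via the explicit minimal $S$-free resolutions of $R$ in Theorem \ref{1:linear:syzygy:case} and \cite{Koszul:algebras:defined:by:4:quadrics}, this reduces to checking that the minimal $S/\mathfrak{c}$-free resolution of $R$ has the shape forced by the Golod formula for the Poincar\'e series of $\kk$ over $R$ (equivalently, that the relevant Massey products on Koszul homology vanish). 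Carrying this out family by family completes (a), and shows moreover that every Koszul algebra defined by $g\le 4$ quadrics, other than the transversal-sum case, is a Golod quotient of a complete intersection of quadrics.

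For (b), the plan is to reduce to a tensor product. By Proposition \ref{transveral:height:1:ideals:are:LG-quadratic} and Corollary \ref{Betti:table:preserved:implies:quotient:by:regular:sequence}, $I$ specializes, via a regular sequence of linear forms, from the generic transversal ideal $(u_1z,u_2z,v_3w,v_4w)$; since absolute Koszulness is unchanged by a polynomial extension and passes both ways along a quotient by a linear regular sequence, $R$ is absolutely Koszul if and only if $A\otimes_\kk A$ is, where $A=\kk[a,b,c]/(ab,ac)$. The ring $A$ is a fiber product of polynomial rings over $\kk$, hence Golod, Koszul, and therefore absolutely Koszul -- but it is not regular. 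The Koszul homology algebra of $A\otimes_\kk A$ is $H^A\otimes_\kk H^A$; its cross-products $h\otimes h'$ are nonzero while all triple products of positive-degree classes vanish, which shows it is not of the form (exterior algebra) $\otimes$ (algebra with trivial multiplication). Hence $A\otimes_\kk A$ is not a Golod quotient of any complete intersection -- consistently with (a). The crux of (b), and the main obstacle, is to promote this to the failure of absolute Koszulness: the plan is to exhibit a finitely generated $A\otimes_\kk A$-module of infinite linearity defect that genuinely mixes the two tensor factors -- so that the easy argument showing that tensor modules $M\otimes_\kk N$ have finite linearity defect does not apply -- or, equivalently, to contradict the rationality constraints that absolute Koszulness would force on the Poincar\'e series of every finitely generated module. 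The hypothesis $\ch(\kk)=0$ enters through the results on linearity defects and Golod-type properties used at this step (cf.\ \cite{absolutely:Koszul:algebras}); the characteristic-$p$ case is left open. Together with (a), this gives that a Koszul algebra defined by $g\le 4$ quadrics is absolutely Koszul precisely when it has the Backelin--Roos property, settling the question of \cite{absolutely:Koszul:algebras} in this range.
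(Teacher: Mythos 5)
Your proposal for part (a) follows the same overall shape as the paper's argument --- case analysis by height, then by the four families of Theorem~\ref{thm:main} --- and the complete intersection $\mathfrak{c}=(q_3,q_4)$ you propose for case~(iii) is exactly the one the paper uses. However, the paper's verifications are cleaner: for $\hgt I = 1$, for case~(2)(i), and for the height-$(g-1)$ cases the paper simply invokes the criterion of \cite[3.1]{absolutely:Koszul:algebras} that $\reg_S R \le 1$ implies the Backelin--Roos property, together with the observation that the property is preserved under factoring out regular linear forms and quadrics; there is no need for the fiber-product argument or a bespoke choice of $\mathfrak{c}$ in those cases. For case~(iii) the paper does not try to verify Golodness via Massey products or Poincar\'e-series identities --- it notes $R'=S/(x,y,C)$ is an algebra retract of $Q=S/C$ with $\ld_Q R'=0$, so $I/C\cong (x,y)Q$ has a linear $Q$-resolution, and then transfers this to the non-generic case by showing $w-z$ is a nonzerodivisor. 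Your proposal to ``verify the Golod formula family by family'' is left entirely as a plan, so (a) is not actually completed.

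The genuine gap is in part (b), and you acknowledge it yourself: showing that the Koszul homology algebra of $A\otimes_\kk A$ is not of the right form only rules out the \emph{Backelin--Roos} property; it says nothing directly about absolute Koszulness, and whether the two notions coincide is precisely the open question being tested. The step you flag as ``the crux and the main obstacle'' is the entire content of (b), and your proposal offers no route across it --- exhibiting a module of infinite linearity defect ``that genuinely mixes the two tensor factors'' is not something your outline accomplishes or reduces to a cited result. The paper's argument uses a different and decisive input: it reduces (by ascent of absolute Koszulness along linear regular sequences, then killing $x-a$, $y-d$) to $\kk[a,b,c,d]/(a^2,ab,cd,d^2)$, which Roos \cite[2.4]{GoodBad} proves is a \emph{bad} Koszul algebra in characteristic zero, and bad Koszul algebras are not absolutely Koszul by \cite[1.8]{Koszul:modules}. (The descent of ``good'' along large homomorphisms \cite[2.5]{GoodBad}, \cite[2.2, 2.3]{Levin} is what justifies the reduction.) That chain of citations --- Roos's bad Koszul algebra result plus Herzog--Iyengar --- is the missing idea; without it your plan for (b) does not close.
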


\begin{proof}
Clearly, $R$ has the Backelin-Roos property if it is a quadratic complete intersection, in which case $\hgt I = g$.  By \cite[3.1]{absolutely:Koszul:algebras}, $R$ also has the Backelin-Roos property if $\reg_S R \leq 1$.  In particular, this is the case when $\hgt I = 1$. It also follows that factoring out a regular sequence of linear forms or quadrics preserves the Backelin-Roos property by considering one nonzerodivisor at a time.  If $\hgt I = {g - 1}$, then by \cite{Koszul:ACI's} either $I = I_2(M) + (q_4, \dots, q_g)$ for some $3 \times 2$ matrix $M$ of linear forms with $\hgt I_2(M) = 2$ and quadrics $q_4, \dots, q_g$ that form a regular sequence modulo $I_2(M)$, or $I = (a_1x, a_2x, q_3, \dots, q_g)$ for some linear forms $x, a_1, a_2$ and quadrics $q_3, \dots, q_g$ that form a regular sequence modulo $(a_1x, a_2x)$.  Since $\reg_S S/I_2(M) = \reg_S S/(a_1x, a_2x) = 1$, these quotients have Backelin--Roos property, and thus, so does $R$.  Hence, the proof of the theorem reduces to considering the case when $\hgt I = 2$ and $I$ is generated by $g=4$ quadrics.  

In that case, we consider the 4 possible forms for $I$ described by Theorem \ref{structure:theorem}. In cases (i) and (ii), it follows from \cite[3.3]{Koszul:algebras:defined:by:4:quadrics} that either $\reg_S R = 1$ or $R$ can be obtained by factoring out a quadratic nonzerodivisor from a ring of regularity one. Thus, as above, $R$ has the Backelin-Roos property.

{\sc Case} (iv): In this case, $I = (ax, bx, cy, dy)$ is a sum of two transversal height one ideals each generated by two quadrics.  Since the absolutely Koszul property ascends from quotients of $R$ by a regular sequence of linear forms \cite[2.4]{absolutely:Koszul:algebras}, it suffices to show that $R$ is not absolutely Koszul in the generic case where $a, b, c, d, x, y$ are all variables of $S$.  

Under these circumstances, we claim that $R$ is a bad Koszul algebra in the sense of Roos \cite{GoodBad} and, therefore, is not absolutely Koszul \cite[1.8]{Koszul:modules}.  Being a good Koszul algebra descends to quotients rings under maps that are large homomorphisms in the sense of Levin \cite[2.5]{GoodBad}, of which two important cases are algebra retracts and quotients by a regular sequence of linear forms \cite[2.2, 2.3]{Levin}.  Thus, by an algebra retract it first suffices to show that $R$ is a bad Koszul algebra when $S = \kk[a, b, c, d, x, y]$, and then by killing the regular sequence $x - a$, $y - d$, we may further assume that $R = \kk[a, b, c, d]/(a^2, ab, cd, d^2)$, which Roos shows is a bad Koszul algebra in characteristic zero \cite[2.4]{GoodBad}.
 
{\sc Case} (iii):  In this case, we have $I = (xz, yz, a_3x+b_3y, a_4x + b_4y)$ for some linear forms  $x$, $y$, $z$, $a_i$, $b_i$ such that $\hgt(x, y) = 2$, $\hgt (a_3x+b_3y, a_4x+b_4y) = 2$, and $\hgt (z, a_3x+b_3y, a_4x+b_4y, a_3b_4-a_4b_3) = 3$. Set $q_i = a_ix+b_iy$ for $i = 3, 4$, $\Delta = a_3b_4 - a_4b_3$, and $C=(q_3,q_4)$. Then $C$ is a complete intersection, and we claim the natural surjection $\varphi: Q = S/C \to R$ is Golod. Since $R$ is Koszul by Corollary \ref{1:linear:syzygy:case:is:LG-quadratic} and since $Q$ is Koszul, then by \cite[3.1]{absolutely:Koszul:algebras} it suffices to show that $I/C$ has a 2-linear resolution over $Q$. 

We first consider the case where $\hgt (z, q_3, q_4) = 3$.  In that case, it suffices to show that $I/C = z(x, y)Q \iso (x,y)Q$ has a linear resolution as a $Q$-module. Write $S=S'[x,y]$ where $S'$ is a polynomial ring over $k$. Since $C\subseteq (x,y)$, the natural epimorphism $\pi : Q \to Q/(x,y)Q \iso S'$ is an algebra retract. Indeed, if $\Phi : S' \to Q$ denotes the composition $S'\hookrightarrow S \to Q$, it is easily seen that $\pi\circ \Phi = \Id_{S'}$. Since $Q$ is Koszul, it follows from \cite[2.3]{absolutely:Koszul:algebras} that $\ld_Q Q/(x,y)Q = 0$.  Thus, $(x,y)Q$ has a linear resolution as a $Q$-module so that $\varphi : Q \to R$ is Golod.

In general, consider the ideal $\tilde{I} = (wx, wy, q_3, q_4) \subseteq \tilde{S} = S[w]$, which has the same form as $I$ but where $\hgt(w, q_3, q_4) = 3$ since $(w, q_3, q_4)/w\tilde{S} \iso C$ has height 2.  By the preceding paragraph, we know that the surjection $\tilde{S}/C\tilde{S} \to \tilde{S}/\tilde{I}$ is Golod.  By arguing as in \cite[3.9]{absolutely:Koszul:algebras}, it suffices to show that $w - z$ is regular mod $\tilde{I}$ and mod $C\tilde{S}$ to prove that the induced surjection $S/C \to R$ is Golod so that $R$ has the Backelin-Roos property.  That $w - z$ is regular on $\tilde{S}/\tilde{I}$ follows from Corollary \ref{Betti:table:preserved:implies:quotient:by:regular:sequence} since it has the same Betti table over $\tilde{S}$ as the Betti table of $R$ over $S$ by Theorem \ref{1:linear:syzygy:case}.  On the other hand, any associated prime of $C\tilde{S}$ is of the form $P\tilde{S}$ for some associated prime $P$ of $C$ in $S$ \cite[23.2]{Matsumura}, and so, $w - z \notin P\tilde{S}$ since otherwise we would have $w \in (z, P)\tilde{S}$, which is clearly impossible.  Hence, $w -z$ is regular mod $C\tilde{S}$.
\end{proof}

Combining the above theorem with \cite[3.4]{absolutely:Koszul:algebras}, we have the following.

\begin{cor}
For a Koszul algebra $R = S/I$ defined by $g \leq 4$ quadrics over an algebraically closed field of characteristic zero, the following are equivalent:
\begin{enumerate}[label = \textnormal{(\alph*)}]
\item $R$ has the Backelin--Roos property.
\item $R$ is absolutely Koszul.
\item $R$ does not have the Betti table:
\begin{center}
\textnormal{
\begin{tabular}{c|cccccccc}
  & 0 & 1 & 2 & 3 & 4 \\ 
\hline 
0 & 1 & -- & --  & -- & -- \\ 
1 & -- & 4 & 2  & --  & -- \\
2 & -- & -- & 4 & 4 & 1
\end{tabular}
}
\end{center}
\item $I$ is not the sum of two transversal height one ideals each generated by two quadrics. 
\end{enumerate}
\end{cor}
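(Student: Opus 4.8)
The plan is to derive the corollary from the theorem above and \cite[3.4]{absolutely:Koszul:algebras} by running the cycle of implications (a) $\Rightarrow$ (b) $\Rightarrow$ (c) $\Rightarrow$ (d) $\Rightarrow$ (a). The step (a) $\Rightarrow$ (b) is the standard implication, recalled at the beginning of \S\ref{Backelin-Roos:property:section}, that a Koszul algebra with the Backelin--Roos property is absolutely Koszul, and it needs no hypothesis on $g$ or $\hgt I$. Before anything else I would record the reduction that the displayed Betti table can occur only for ideals with exactly $g = 4$ generators (since $\beta_1^S(R) = 4$) and, reading off the Hilbert series, with $e(R) = 1$ and $\hgt I = 2$. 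Consequently, whenever $g \le 3$ or $\hgt I \neq 2$, conditions (c) and (d) hold automatically, and in those ranges the theorem above already yields the Backelin--Roos property (hence (a) and, via (a) $\Rightarrow$ (b), also (b)); so all four conditions hold and the equivalence is trivial there. The content is therefore concentrated in the case $g = 4$, $\hgt I = 2$.

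For (b) $\Rightarrow$ (c) I would argue by contraposition. If $R$ is Koszul and has the displayed Betti table, then by Theorem~\ref{Koszul:4:quadric:height:2:Betti:tables} this is one of the four tables listed there, and by Theorem~\ref{2:linear:syzygies:case} the ideal $I$ has one of the forms (a)--(d) given there; since forms (a)--(c) do \emph{not} define Koszul algebras by Lemma~\ref{non-Koszul:2:linear:syzygies:case}, the ideal $I$ must have form (d), i.e. $I = (a_1x, a_2x, b_3y, b_4y)$ with $(a_1x, a_2x)$ and $(b_3y, b_4y)$ transversal height one ideals. Part (b) of the theorem above -- which is exactly where the hypothesis $\ch(\kk) = 0$ is consumed -- then shows that $R$ is not absolutely Koszul, so (b) fails. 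For (c) $\Rightarrow$ (d) I would again contrapose: if $I = I_1 + I_2$ is the sum of two transversal height one ideals, each minimally generated by two quadrics, then (as in the proof of Proposition~\ref{transveral:height:1:ideals:are:LG-quadratic}) we may write $I_1 = (a_1x, a_2x)$ and $I_2 = (b_3y, b_4y)$ with $x, y$ linear forms, $a_1, a_2$ independent, and $b_3, b_4$ independent, so that $\hgt(a_1, a_2) = \hgt(b_3, b_4) = 2$; the ``if'' half of Lemma~\ref{transversal:case} -- which requires only transversality together with these two height conditions -- then shows that $R$ has the displayed Betti table, so (c) fails.

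Finally, for (d) $\Rightarrow$ (a) I would invoke the structure theorem. When $\hgt I \in \{1, g-1, g\}$ the theorem above already gives the Backelin--Roos property (through the cases $\reg_S(\cdot) \le 1$, quotients by quadratic nonzerodivisors, and the quadratic complete intersection); and when $g = 4$ and $\hgt I = 2$, Theorem~\ref{thm:main} lists the forms (i)--(iv) for $I$, of which (i)--(iii) have the Backelin--Roos property by the theorem above, while (iv) is precisely the sum-of-transversal-ideals case excluded by hypothesis (d). The citation \cite[3.4]{absolutely:Koszul:algebras} supplies the remaining input on the absolutely Koszul side. The one genuinely hard step is (b) $\Rightarrow$ (c): it rests on the complete classification in Theorems~\ref{Koszul:4:quadric:height:2:Betti:tables} and~\ref{2:linear:syzygies:case} -- in particular the fact, proved partly in the appendix, that the three forms (a)--(c) of Theorem~\ref{2:linear:syzygies:case}, which are numerically indistinguishable from a Koszul algebra, nonetheless fail to be Koszul -- together with the characteristic-zero computation in part (b) of the theorem above pinning the transversal case down as a bad Koszul algebra in Roos's sense. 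Once those ingredients are granted, the corollary itself is just bookkeeping on the structure theorem.
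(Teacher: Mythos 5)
Your proof is correct and takes essentially the same route the paper intends: the one-line proof ``Combining the above theorem with [CI+15, 3.4]'' implicitly runs exactly the implications you spell out, with the \S 4 theorem supplying $(d)\Rightarrow(a)$ and $\lnot(d)\Rightarrow\lnot(b)$, the standard implication Backelin--Roos $\Rightarrow$ absolutely Koszul giving $(a)\Rightarrow(b)$, and the equivalence $(c)\Leftrightarrow(d)$ coming from Lemma \ref{transversal:case}, Theorem \ref{2:linear:syzygies:case}, and Lemma \ref{non-Koszul:2:linear:syzygies:case}. One small imprecision: you say the ``if'' half of Lemma \ref{transversal:case} ``requires only transversality together with these two height conditions,'' but its stated hypothesis also includes $\hgt(x,y)=2$; this is harmless, since (as the lemma's own proof shows) that last condition is not used in the ``if'' direction and in any case follows from transversality of two nonzero ideals $(a_1x,a_2x)$, $(b_3y,b_4y)$ with $\hgt(a_1,a_2)=\hgt(b_3,b_4)=2$.
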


\begin{rmk}
Roos' proof in \cite{GoodBad} that $\kk[a, b, c, d]/(a^2, ab, cd, d^2)$ is a bad Koszul algebra relies on $\kk$ having characteristic zero.  The results of this section would hold in arbitrary characteristic if we knew that this ring is a bad Koszul algebra without any restriction on $\kk$.
\end{rmk}

\begin{appendix}
\section{The Bad Non-Koszul Algebra}
\label{bad:non-Koszul:algebra}

In this section, we show that ideals of the form (c) from Theorem \ref{2:linear:syzygies:case} never define a Koszul algebra, tying up the remaining loose end in our structure theorem for Koszul algebras defined by four quadrics.  

Suppose that $R = S/I$ where $I = (b_3x, b_4x, a_3x + b_3y, a_4x + b_4y)$ for some linear forms $x, y, a_i, b_i$ satisfying $\hgt(x, b_3, b_4) = 3$, $\hgt(a_3, a_4, b_3, b_4) = 4$, and $\hgt (a_3x + b_3y, a_4x + b_4y) = 2$ as in the theorem.  In particular, we can consider the generic ideal of this form where the linear forms are all variables.  By Lemma \ref{non-Koszul:2:linear:syzygies:case} and Corollary \ref{Betti:table:preserved:implies:quotient:by:regular:sequence}, any particular ideal $I$ of the above form must be obtained by specializing the generic case by a regular sequence of linear forms, and since the Koszul property ascends and descends along quotients by regular sequences of linear forms by Proposition \ref{passing:Koszulness:to:and:from:quotients}, it suffices by first ascending to the generic case and then specializing again to prove that \emph{at least one} ideal $I$ of the above form is not Koszul.

\begin{notation}
For the remainder of this section, we let
\[ R = \kk[x, y, a, b]/(bx, xy, ax-by, x^2 -y^2), \]
which is easily seen to have the form (c) from Theorem \ref{2:linear:syzygies:case}.  We also let
\[ R' = \kk[x, y]/(x^2 -y^2, xy), \qquad M = \coker_{R'} A, \qquad A = \begin{pmatrix} x & 0 \\ -y & x \end{pmatrix}, \]
and we observe that $R \iso \Sym_{R'}(M)$.  In particular, $R$ is bigraded by setting $\deg x = \deg y = (1, 0)$ and $\deg a = \deg b = (0, 1)$.  Moreover, it is easily checked that $M$ has a periodic resolution over $R'$ similar to the cokernel of a matrix factorization over a hypersurface:
\[ \cdots \longto R'(-3)^2 \stackrel{A}{\longto} R'(-2)^2 \stackrel{B}{\longto} R'(-1)^2 \stackrel{A}{\longto} (R')^2 \longto M \longto 0, \qquad B = \begin{pmatrix} y & 0 \\ x & y \end{pmatrix}. \]
We will show that this particular ring $R$ is not Koszul.  Thus, not even a symmetric algebra of a module having a linear resolution over a quadratic complete intersection is necessarily Koszul.
\end{notation}

Unfortunately, even in this special setup, showing that $R$ is not Koszul seems to require some technical computation.  To simplify the computation as much as possible, we will exploit the fact that $R'$ is an algebra retract of $R$.

\begin{prop}[{\cite[1.4]{HHO}}]
Suppose $\phi: R \to R'$ is an algebra retract of standard graded $\kk$-algebras.  Then $R$ is Koszul if and only if $R'$ is Koszul and $R'$ has a linear resolution as an $R$-module (via $\phi$).
\end{prop}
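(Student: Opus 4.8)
The plan is to deduce the proposition from a multiplicativity formula for Poincar\'e series. For a standard graded $\kk$-algebra $X$ and a finitely generated graded $X$-module $M$, write
\[ P^X_M(s,t) \;=\; \sum_{i,j}\bigl(\dim_\kk\Tor^X_i(M,\kk)_j\bigr)\,s^j t^i, \qquad P^X(s,t):=P^X_{\kk}(s,t). \]
By definition, $X$ is Koszul exactly when $P^X(s,t)$ is supported on the diagonal $\{i=j\}$, and a cyclic graded $X$-module generated in degree $0$ -- such as $R'=R/\ker\phi$ regarded as an $R$-module via $\phi$ -- has a linear $X$-resolution exactly when $P^X_M(s,t)$ is supported on the diagonal. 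The crux is the identity
\[ P^R(s,t) \;=\; P^{R'}(s,t)\cdot P^R_{R'}(s,t), \]
which I claim holds for every algebra retract $\phi\colon R\to R'$. Granting it, both implications are formal. If $R'$ is Koszul and $R'$ has a linear $R$-resolution, the right-hand side is a product of two power series supported on the diagonal, hence is itself supported on the diagonal, so $R$ is Koszul. Conversely, if $R$ is Koszul then $P^{R'}(s,t)\,P^R_{R'}(s,t)$ is supported on the diagonal; both factors have nonnegative integer coefficients and constant term $1$ (since $\Tor^{R'}_0(\kk,\kk)_0=\kk=\Tor^R_0(R',\kk)_0$), so for any off-diagonal monomial $s^at^b$ with $a\neq b$ the coefficient of $s^at^b$ in the product is a sum of nonnegative terms, one of which is the coefficient of $s^at^b$ in a single factor times the constant term $1$ of the other; as the total is $0$, each factor is supported on the diagonal, i.e. $R'$ is Koszul and $R'$ has a linear $R$-resolution.

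To establish the multiplicativity formula I would use the Cartan--Eilenberg change-of-rings spectral sequence attached to the surjection $\phi\colon R\to R'$,
\[ E^2_{p,q} \;=\; \Tor^{R'}_p\!\bigl(\Tor^R_q(\kk,R'),\,\kk\bigr)\ \Longrightarrow\ \Tor^R_{p+q}(\kk,\kk), \]
which respects internal degree. Two inputs feed into it. First, computing $\Tor^R_q(\kk,R')=\Tor^R_q(R',\kk)$ from a minimal graded $R$-free resolution of $R'$, one sees that $R'_+$ annihilates it: multiplication by a positive-degree element of $R'$ on $R'$ is covered by multiplication by a positive-degree element of $R$ on the resolution, which induces the zero map on $\Tor$ by minimality. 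Hence $\Tor^R_q(\kk,R')$ is merely a graded $\kk$-vector space as an $R'$-module, and the $E^2$-page splits as
\[ E^2_{p,q} \;\cong\; \Tor^R_q(\kk,R')\otimes_\kk\Tor^{R'}_p(\kk,\kk) \]
with the evident internal grading. Second -- and here the section $\sigma\colon R'\to R$ enters -- the spectral sequence degenerates at $E^2$; comparing bigraded dimensions then gives precisely $P^R(s,t)=P^{R'}(s,t)\,P^R_{R'}(s,t)$.

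The degeneration is the step I expect to be the main obstacle, and the guiding idea is that the section splits everything in sight. Because $\sigma$ is multiplicative with $\phi\circ\sigma=\mathrm{id}_{R'}$, applying $\sigma$ in each tensor slot embeds the reduced bar complex of $R'$ computing $\Tor^{R'}_\bullet(\kk,\kk)$ as a subcomplex of the one of $R$ -- it is closed under the bar differential because that differential only multiplies adjacent entries -- and $\phi$ retracts this subcomplex; in particular $\Tor^{R'}(\kk,\kk)$ is an internal-degree-preserving direct summand of $\Tor^R(\kk,\kk)$, which by itself already yields ``$R$ Koszul $\Rightarrow$ $R'$ Koszul''. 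Upgrading this with the $R'$-module decomposition $R=\sigma(R')\oplus\ker\phi$, one assembles a minimal $R$-free resolution of $\kk$ as a twisted tensor product of a minimal $R'$-free resolution of $\kk$ with a minimal $R$-free resolution of $R'$, and making this structure explicit is exactly the degeneration. (The implication $\Leftarrow$ does not actually need this: if $R'$ is Koszul and has a linear $R$-resolution, then in the split $E^2$-page above $\Tor^R_q(\kk,R')$ lies in internal degree $q$ and $\Tor^{R'}_p(\kk,\kk)$ in internal degree $p$, so $E^2_{p,q}$ -- hence every later page and the abutment $\Tor^R_{p+q}(\kk,\kk)$ -- is concentrated in internal degree $p+q$, i.e. $R$ is Koszul.) So the real content of the proposition is that $R$ Koszul forces $\reg_R R'=0$.
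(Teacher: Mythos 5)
The paper does not prove this statement; it is quoted verbatim from \cite[1.4]{HHO}, so there is no in-house proof to compare against. Your strategy --- reduce to the Poincar\'e series identity $P^R = P^{R'}\cdot P^R_{R'}$ and then argue formally from nonnegativity of coefficients and constant term $1$ --- is sound, and several pieces of the proposal are complete: the formal deduction of both implications from the identity, the minimality argument giving $E^2_{p,q}\cong \Tor^R_q(\kk,R')\otimes_\kk\Tor^{R'}_p(\kk,\kk)$, the full proof of $(\Leftarrow)$ directly from the shape of this $E^2$-page, and the bar-complex retraction proving ``$R$ Koszul $\Rightarrow R'$ Koszul.''

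The gap you flag is real and is the crux: you have not proved degeneration at $E^2$, which is exactly what is needed for ``$R$ Koszul $\Rightarrow R'$ has a linear $R$-resolution.'' The bar-complex retraction only shows $\Tor^{R'}(\kk,\kk)$ is a bigraded direct summand of $\Tor^R(\kk,\kk)$; it says nothing a priori about $\Tor^R(R',\kk)$ or about the higher differentials $d^r$. What your ``twisted tensor product of minimal resolutions'' is reaching for is Levin's theorem on large homomorphisms \cite{Levin}: a graded surjection $\phi$ is \emph{large} iff $\Tor^\phi\colon\Tor^R(\kk,\kk)\to\Tor^{R'}(\kk,\kk)$ is surjective, and this is equivalent to $P^R_M = P^{R'}_M\cdot P^R_{R'}$ for every finitely generated graded $R'$-module $M$. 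Algebra retracts are large precisely because $\sigma$ splits $\Tor^\phi$ --- your bar argument already gives this --- and then $M=\kk$ yields the identity you need. The hard half of Levin's equivalence (``large $\Rightarrow$ product formula'') is an inductive assembly of a minimal $R$-free resolution of $M$ from a minimal $R'$-free resolution of $M$ and a minimal $R$-free resolution of $R'$, with surjectivity on $\Tor$ used to keep the assembled complex minimal; this is genuinely nontrivial, so you should cite Levin rather than try to reprove the degeneration ab initio.
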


By the above proposition, instead of showing that $k$ does not have a linear resolution over $R$, we need only to show that $R'$ does not have a linear resolution over $R$. 
This reduces the number of independent syzygies that we need to compute from at worst 340 (for the residue field) to at worst 80 (for $R'$).  It also has the advantage that the matrices we need to compute are generally smaller than those we would need to compute for alternative approaches such as computing the deviations of $R$ via a minimal model of $R$ over $S$; see \cite{infinite:free:resolutions} for further details.

To simplify the computation of the syzygies, we also exploit the bihomogeneous structure of $R$.

\begin{lemma} \label{bad:non-Koszul:algebra:basis}
The elements $a^ib^j, xa^i, ya^i, x^2$ for all $i, j \geq 0$ form a bihomogeneous basis for $R$ over $\kk$.
\end{lemma}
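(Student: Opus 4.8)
The plan is to read the basis off the identification $R \iso \Sym_{R'}(M)$ recorded above, which we may rewrite as $R \iso R'[a,b]/(ax-by,\,bx)$ with $R' = \kk[x,y]/(x^2-y^2,\,xy)$. Since the ideal $(ax-by, bx)$ is homogeneous of degree one in the variables $a,b$, the component of $R$ in $(a,b)$-degree $q$ is $R_q = \Sym^q_{R'}(M)$, and as an $R'$-module it is generated by the $q+1$ monomials $m_j := a^{q-j}b^j$ ($0 \le j \le q$) modulo the relations obtained by multiplying $ax-by$ and $bx$ by the degree-$(q-1)$ monomials in $a,b$, namely
\[ x m_j - y m_{j+1} = 0, \qquad x m_{j+1} = 0 \qquad (0 \le j \le q-1). \]
I would compute each bihomogeneous piece $R_{(p,q)}$ from this presentation and match it against the proposed set, which contributes $q+1$ elements in bidegree $(0,q)$, exactly two elements ($xa^q$ and $ya^q$) in bidegree $(1,q)$ for every $q \ge 0$, a single element $x^2$ in bidegree $(2,0)$, and nothing in any other bidegree.

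For the spanning statement I would argue by reduction. Any monomial $x^{p}y^{p'}a^ib^j$ in $R$ can be rewritten in terms of the listed elements using that $xy = 0$ and $x^2 = y^2$ hold in $R'$ and that $bx = 0$ and $ax = by$ hold in $R$: the relation $xy = 0$ kills every monomial divisible by both $x$ and $y$; $bx = 0$ kills every monomial divisible by both $b$ and $x$; the relation $by = ax$ rewrites $y a^i b^j$ with $j \ge 1$ as $x a^{i+1}b^{j-1}$, which then either lies in the list (when $j=1$) or is killed by $bx = 0$ (when $j \ge 2$); and $x^3 = x\cdot x^2 = x y^2 = 0$, $y^3 = 0$, and $x^2a^i = 0$ for $i \ge 1$ (since $xa^i = b\,ya^{i-1}$, so $x^2a^i = (xb)\,ya^{i-1} = 0$) dispose of the remaining pure powers of $x$ and $y$. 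This leaves exactly the monomials $a^ib^j$, $xa^i$, $ya^i$, $x^2$ (and $0$), so these span $R$ over $\kk$; in particular $\dim_\kk R_{(p,q)}$ is at most the count recorded above.

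It then remains to establish the matching lower bounds on $\dim_\kk R_{(p,q)}$, which I would read off the presentation of $\Sym^q_{R'}(M)$. For $q = 0$ this is the classical fact that $\kk[x,y]/(x^2-y^2, xy)$ has $\kk$-basis $1, x, y, x^2$. For $q \ge 1$: in $x,y$-degree $0$ the module $\Sym^q_{R'}(M)$ is free on $m_0,\dots,m_q$, because every relation above has $x,y$-degree one; in $x,y$-degree one the $2(q+1)$ spanning elements $xm_j, ym_j$ are cut down by the $2q$ displayed relations, which are linearly independent in $R'_1 \otimes_\kk \kk^{q+1}$ and leave exactly the classes of $xm_0 = xa^q$ and $ym_0 = ya^q$; and in $x,y$-degree $\ge 2$, multiplying $xm_j - ym_{j+1} = 0$ by $x$ and using $xy = 0$ gives $x^2 m_j = 0$ for $j \le q-1$, while $xm_{j+1} = 0$ gives $x^2m_{j+1} = 0$, so $x^2m_j = 0$ for all $j$ and hence $R_{(p,q)} = 0$ for $p \ge 2$ (also using $R'_p = 0$ for $p \ge 3$). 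Comparing with the spanning bound forces equality in every bidegree, so the proposed set is a $\kk$-basis. The one point deserving care is the claim that the displayed relations generate all relations of $\Sym^q_{R'}(M)$, but this is immediate: $(ax-by, bx) \subseteq R'[a,b]$ is generated in $(a,b)$-degree one, so its degree-$q$ component is the $R'$-span of $ax-by$ and $bx$ times the degree-$(q-1)$ monomials. The remaining work is routine bookkeeping inside the small Artinian ring $R'$; alternatively, one could bypass the presentation by exhibiting the candidate basis as an explicit cyclic $\kk[x,y,a,b]$-module $V$ on which $bx$, $xy$, $ax-by$, and $x^2-y^2$ act as zero, producing a surjection $R \to V$ that combines with the spanning bound to give the result.
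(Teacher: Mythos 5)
Your proof is correct, and its overall skeleton matches the paper's: both read $R$ as $R'[a,b]/(bx,\,ax-by)$, establish spanning by reducing monomials of $R'[a,b]$ modulo the two relations, and then check linear independence bidegree by bidegree. The spanning reductions you give are the same ones the paper records. Where you diverge is the independence step. The paper argues directly: in bidegree $(0,n)$ it reduces modulo $(x,y)$ to $\kk[a,b]$, in bidegree $(2,0)$ it reduces modulo $(a,b)$ to $R'$, and in bidegree $(1,i)$ it assumes a nontrivial relation $\alpha a^ix + \beta a^iy = f_1 bx + f_2(ax-by)$ in $R'[a,b]$ and derives a contradiction from the freeness of $R'[a,b]$ over $\kk[a,b]$. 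You instead run a uniform dimension count for $\Sym^q_{R'}(M)$: since $J=(bx,ax-by)$ is generated in bidegree $(1,1)$, the bidegree $(1,q)$ piece of $J$ is exactly the $\kk$-span of the $2q$ vectors $xm_j - ym_{j+1}$, $xm_{j+1}$, which you check are independent in $R'_1\otimes\kk^{q+1}$, giving $\dim R_{(1,q)} = 2(q+1)-2q = 2$; the bidegree $(0,q)$ piece is free of rank $q+1$ since all relations have positive $(x,y)$-degree; and you verify $R_{(p,q)}=0$ for $p\geq 2$, $q\geq 1$ by multiplying the relations by $x$. Matching against the spanning bound in each bidegree then forces equality. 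Both routes are sound; yours is slightly more systematic and computes the full bigraded Hilbert function along the way, while the paper's is a leaner elementwise contradiction for the one bidegree that needs work. Your closing remark about exhibiting an explicit cyclic module $V$ annihilated by the four defining forms would also work cleanly and is arguably the shortest possible route, though you did not carry it out.
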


\begin{proof}
It is easily seen that $R'$ has $1, x, y, x^2$ as a basis over $\kk$ so that the polynomial ring $R'[a, b]$ has a bihomogeneous basis consisting of the elements $a^ib^j$, $xa^ib^j$, $ya^ib^j$, and $x^2a^ib^j$ for all $i, j \geq 0$.  We note that $R \iso R'[a, b]/J$, where $J = (bx, ax-by)$, and modulo $J$, we have
\vspace{-5 ex}
\begin{multicols}{2}
\begin{align*} 
xa^ib^j &\equiv 0  & (j > 0) \\[1 ex]
ya^ib^j &\equiv xa^{i+1}b^{j-1} \equiv 0 & (j > 1) \\[1 ex]
\!\!ya^ib &\equiv xa^{i+1}
\end{align*}
\vspace{0.25 em}
\begin{align*}
x^2a^ib^j &\equiv 0 & (j > 0) \\[1 ex]
x^2a^i &\equiv xya^{i-1}b \equiv 0 & (i > 0)
\end{align*}
\end{multicols}
\noindent so that $R$ is spanned by the elements  $a^ib^j, xa^i, ya^i, x^2$ for all $i, j \geq 0$.  To prove that these elements are $\kk$-linearly independent, it suffices to focus on a particular bidegree. For each $n\geq 0$, the elements of bidegree $(0,n)$ are the monomials $a^ib^{n-i}$, which are all linearly independent because they are even linearly independent in $R/(x, y) \iso \kk[a, b]$. The only monomial in the spanning set of bidegree $(2,0)$ is $x^2$, which is nonzero as it is nonzero even in $R/(a, b) \iso R'$.  It remains to see that for every $i$ the two monomials of bidegree $(1, i)$ $xa^i$ and $ya^i$ are linearly independent in $R$.

If not, there are $\alpha,\beta\in \kk$ not both zero and $f_1, f_2 \in R'[a, b]$ such that $\alpha a^i x + \beta a^i y = f_1bx + f_2(ax - by)$. We may assume $f_1, f_2$ are bihomogeneous of degree $(0, i-1)$ so that $f_1, f_2 \in \kk[a, b]$. Then $x(\alpha a^i - f_1b -f_2a) + y (f_2b - \beta a^i)=0$ in $R'[a,b]$. Since $x, y$ are part of a basis for $R'[a, b]$ over $\kk[a,b]$, it follows that $\alpha a^i - f_1b -f_2a=0$ and $f_2b-\beta a^i=0$.  From the latter, we obtain $\beta = 0 = f_2$ so that $\alpha a^i - f_1b=0$.  Consequently, we must have $\alpha = 0 = f_1$, which is a contradiction. Thus, $xa^i$ and $ya^i$ must be linearly independent in $R$ as claimed.
\end{proof}

Recall that by the above we need to resolve  the ideal $U = (a, b)$ over $R$. In the following statement, we consider the standard grading of $R$ and determine a graded free resolution of $U$, while in the proof, to simplify the arguments, we make use of the bigrading on $R$.

\begin{lemma}
The first three steps of the minimal free resolution of the ideal $U = (a, b)$ over $R$ are
\[ R(-4)^{11} \stackrel{\dd_3}{\longto} R(-3)^6 \stackrel{\dd_2}{\longto} R(-2)^3 \stackrel{\dd_1}{\longto} R(-1)^2  \longto 0,\]
where:
\[ \dd_1 = 
\left(
\begin{array}{cc;{2pt/2pt}c} 
x & 0 & b \\ 
-y & x & -a 
\end{array} 
\right),
\quad 
\dd_2 = 
\left(
\begin{array}{cc;{2pt/2pt}cccc} 
y & 0 & b & 0 & - b & -a \\ 
x & y & a & b & 0 & 0 \\  \hdashline[2pt/2pt]
0 & 0 & 0 & 0 & x & y  
\end{array}
\right), 
\]
\setcounter{MaxMatrixCols}{50}
\[ \dd_3 = 
\left(
\begin{array}{cc;{2pt/2pt}ccccccc;{2pt/2pt}cc} 

x & 0 & 0 & -b & 0 & 0 & b & b & a & 0 & 0 \\
-y & x & -b & -a & 0 & -b & 0 & 0 & -b & 0 & 0 \\ \hdashline[2pt/2pt]
0 & 0 & x & y & 0 & 0 & 0 & 0 & 0 & 0 & b \\
0 & 0 & 0 & 0 & x & y & 0 & 0 & 0 & 0 & -a \\
0 & 0 & 0 & 0 & 0 & 0 & y & 0 & -x & a & b \\
0 & 0 & 0 & 0 & 0  & 0 & 0 & x & y & -b & 0
\end{array}
\right).
\]
\end{lemma}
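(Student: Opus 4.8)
The plan is to show that the displayed sequence is (i) a complex, (ii) minimal, and (iii) exact in homological degrees $0$, $1$, and $2$. Minimality is immediate, since every entry of $\dd_1$, $\dd_2$, $\dd_3$ is a linear form; and checking that the sequence is a complex---that $\dd_1\dd_2 = 0$, $\dd_2\dd_3 = 0$, and that the columns of $\dd_1$ genuinely are syzygies of $(a,b)$ (the first records $ax = by$, the second $bx = 0$, the third is the Koszul syzygy on $a,b$)---is a routine computation using only the four relations $bx = xy = 0$, $ax = by$, $x^2 = y^2$ defining $R$. The content is exactness, and the tool for it is the bigrading on $R$ together with the $\kk$-basis of Lemma~\ref{bad:non-Koszul:algebra:basis}.

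The first move is to read off the bidegrees of the generators of the free modules; they are exactly the blocks delimited by the dashed lines in the three matrices, namely $F_0 = R(0,-1)^2$, $F_1 = R(-1,-1)^2 \oplus R(0,-2)$, $F_2 = R(-2,-1)^2 \oplus R(-1,-2)^4$, and $F_3 = R(-3,-1)^2 \oplus R(-2,-2)^7 \oplus R(-1,-3)^2$. With these bidegrees the maps $\dd_1$, $\dd_2$, $\dd_3$ are bihomogeneous and $U = (a,b)$ is a bigraded ideal, so it suffices to verify exactness bidegree by bidegree. By Lemma~\ref{bad:non-Koszul:algebra:basis}, $R_{(p,q)}$ is zero unless $p = 0$ (of dimension $q+1$, spanned by the $a^ib^{q-i}$), or $p = 1$ (of dimension $2$, spanned by $xa^q$ and $ya^q$), or $(p,q) = (2,0)$; thus every graded component of every $F_i$ is finite-dimensional of known dimension, and the whole problem becomes a family of explicit linear-algebra checks indexed by $(p,q)$.

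I would then organize these checks by the first coordinate $p$. For $p = 0$ only the generators whose bidegree has first coordinate $0$ contribute, and the bidegree-$(0,q)$ strand collapses to $0 \to R_{(0,q-2)} \to R_{(0,q-1)}^2 \to U_{(0,q)} \to 0$, where the left map is multiplication by $\binom{b}{-a}$; under $R_{(0,n)} \cong \kk[a,b]_n$ this is a graded piece of the Koszul complex on the regular sequence $a, b$ over $\kk[a,b]$, hence exact. For $p = 1$---the essential case---one exploits that $x$ and $y$ annihilate $R_{(1,n)}$ for every $n \geq 1$, that multiplication by $a$ is an isomorphism $R_{(1,n)} \to R_{(1,n+1)}$, and that multiplication by $b$ acts by $xa^n \mapsto 0$ and $ya^n \mapsto xa^{n+1}$; substituting this behaviour into the three matrices turns each bidegree-$(1,q)$ strand into an explicit complex of $\kk$-vector spaces controlled by the $a,b$-entries of the matrices and by the $2\times 2$ matrices $A$, $B$ of the periodic $R'$-resolution of $M$, and exactness at homological degrees $0$, $1$, $2$ is checked uniformly in $q$. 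The remaining bidegrees (those with $p \geq 2$) involve only homological degrees $\geq 1$, are nonzero only in a controlled range, and reduce to the same $\kk[a,b]$- and $R'$-level computations; I would dispatch them last.

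I expect the $p = 1$ strand to be the main obstacle, and within it the identities $\ker\dd_1 = \Im\dd_2$ and $\ker\dd_2 = \Im\dd_3$: one must prove not merely that the displayed columns are syzygies but that, in \emph{every} bidegree, they generate \emph{all} first and second syzygies. Concretely this means showing that in each bidegree $(1,q)$ the eleven columns of $\dd_3$---effectively only columns $10$ and $11$, the others landing in strands already understood---span the full kernel of $\dd_2$; this is where the bookkeeping against the basis of Lemma~\ref{bad:non-Koszul:algebra:basis} is heaviest. Once exactness at $F_0$, $F_1$, $F_2$ has been verified in this way, the minimality of the first three steps is automatic from the fact that all the matrix entries are linear.
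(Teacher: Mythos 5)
Your proposal is correct and takes essentially the same approach as the paper's proof: both exploit the bigrading of $R$ and the explicit bihomogeneous basis of Lemma~\ref{bad:non-Koszul:algebra:basis} to reduce exactness to a family of finite-dimensional $\kk$-linear-algebra checks, one per bidegree. The only cosmetic difference is organizational (you fix the first component $p$ of the bidegree and sweep through homological degrees, whereas the paper resolves one homological step at a time and within each step runs through the possible values of $p$), and your guess that $p=1$ carries the bulk of the burden is somewhat off -- in the paper's computation the $p=2$ and $p=3$ strands at the level of second and third syzygies require the heaviest bookkeeping -- but that is a question of emphasis, not a gap.
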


\begin{proof}
Since $U$ is a bihomogeneous ideal, its syzygies will also be generated by bihomogeneous elements.  At each step, we consider cases based on the first component of the bidegree of a syzygy; the number of cases will be small as $R$ is nonzero only in bidegrees $(0, i)$, $(1, i)$, and $(2, 0)$.

Let $f = (f_1, f_2) \in R(0,-1)^2$ be a bihomogeneous syzygy of $U$.  If $f$ has bidegree $(0, i+2)$, then $f_i \in \kk[a, b]$ so that $f = h(b, -a)$ for some $h \in \kk[a, b]$.  If $f$ has bidegree $(1, i+1)$, then we can write $f_j = c_{j,1}xa^i + c_{j,2}ya^i$ for some $c_{j,r} \in \kk$ so that
\[ 0 = f_1a+ f_2b = c_{1,1}xa^{i+1} + c_{1,2}ya^{i+1} + c_{2,2}ya^ib = (c_{1,1} + c_{2,2})xa^{i+1} + c_{1,2}ya^{i+1}. \]
Hence, we must have $c_{2,2} = -c_{1,1}$ and $c_{1,2} = 0$ by the above lemma so that $f = c_{1,1}a^i(x, -y) + c_{2,1}a^i(0, x)$.  If $f$ has bidegree $(2, i+1)$, then $f = (c_1x^2, c_2x^2) = c_1(x^2, 0) + c_2(0, x^2)$ for some $c_i \in \kk$, and there is no restriction on the $c_i$ since $x^2$ is in the socle of $R$.  However, we note that $(x^2, 0) = x(x, -y)$ and $(0, x^2) = x(0, x)$ belong to the span of the previously found syzygies.  Hence, the columns of $\dd_1$ span the syzygies of $U$, and since these linear syzygies are easily seen to be linearly independent over $\kk$, they form a minimal set of generators for the first syzygies.

Let $f = (f_1, f_2, f_3) \in R(-1,-1)^2 \oplus R(0, -2)$ be a bihomogeneous syzygy on the columns of $\dd_1$.  If $f$ has bidegree $(0, i + 3)$, then $f_1 = f_2 = 0$ and $f_3 \in \kk[a, b]$ so that $bf_3 = 0$ and, hence, $f_3 = 0$.  If $f$ has bidegree $(1, i+2)$, then $f_1, f_2 \in \kk[a, b]$ and $f_3 = c_{3,1}xa^i + c_{3,2}ya^i$ for some $c_{3,r} \in \kk$.   Writing $f_j = \alpha_{j,0}a^{i+1} + \alpha_{j,1}a^ib + b^2f'_j$ for some $\alpha_{j,r} \in \kk$ and $f'_j \in \kk[a, b]$, we see that $\dd_1f = 0$ implies
\begin{align*}
0 &= f_1x + f_3b = (\alpha_{1,0} + c_{3,2})xa^{i+1}, \\
0 &= -f_1y + f_2x - f_3a = (-\alpha_{1,1} + \alpha_{2,0} - c_{3,1})xa^{i+1} + (-\alpha_{1,0} - c_{3,2})ya^{i+1},
\end{align*}
so that $\alpha_{1,0} = -c_{3,2}$ and $\alpha_{1,1} = \alpha_{2,0} - c_{3,1}$.  It follows that $f$ is contained in the submodule spanned by the columns of the matrix
\[ 
M_2 = \begin{pmatrix} 
b & 0 & - b & -a \\ 
a & b & 0 & 0 \\ \hdashline[2pt/2pt]
0 & 0 & x & y
\end{pmatrix},
\]
and the additional syzygies $(b^2, 0, 0)$ and $(0, b^2, 0)$.  However, since $(b^2, 0, 0) = b(b, a, 0) - a(0, b, 0)$ and $(0, b^2, 0) = b(0, b, 0)$, these last two syzygies are not needed to generate the syzygies of bidegree $(1, i+2)$.  

If $f$ has bidegree $(2, i+1)$, then for $j = 1, 2$ we have $f_j = c_{j,1}xa^i + c_{j,2}ya^i$, and $f_3 = c_3x^2$ for some $c_3 \in \kk$ so that $0 = f_1x + f_3b = c_{1,1}x^2a^i$ and $0 = -f_1y + f_2x - f_3a = (-c_{1,2} + c_{2,1})x^2a^i$.  If $i > 0$, this imposes no restrictions on the $f_j$ so that $f$ is in the column span of the matrix
\[ \begin{pmatrix} 
xa & ya & 0 & 0 & 0 \\ 
0 & 0 & xa & ya & 0 \\
0 & 0 & 0 & 0 & x^2
\end{pmatrix}
=
M_2
\begin{pmatrix}
0 & 0 & x & y & 0 \\
0 & 0 & 0 & 0 & 0 \\
0 & x & 0 & 0 & x \\
-x & y & 0 & x & 0
\end{pmatrix},
\]
where the factorization on the right shows these syzygies are generated by the syzygies already found in bidegree $(1, 2)$.  However, if $i = 0$, we must have $c_{1, 1} = 0$ and $c_{1,2} = c_{2,1}$ so that $f = c_{1,1}(y, x, 0) + c_{2,2}(0, y, 0)$.  

Finally, if $f$ has bidegree $(3, i+1)$, then $f = (c_1x^2, c_2x^2, 0)$ for some $c_j \in \kk$, and there are no restrictions on the $c_i$.  However, the syzygies $(x^2, 0, 0) = y(y, x, 0)$ and $(0,x^2,0) = y(0, y, 0)$ are generated by those already found in bidegree $(2, 1)$.  Hence, the columns of $\dd_2$ span the syzygies of $\Im \dd_1$, and since these linear syzygies are easily seen to be linearly independent over $\kk$, they form a minimal set of generators for the second syzygies of $U$.

Let $f \in R(-2, -1)^2 \oplus R(-1, -2)^4$ be a bihomogeneous syzygy on the columns of $\dd_2$.  If $f$ has bidegree $(1, i + 3)$, then $f_1 = f_2 = 0$ and $f_j \in \kk[a, b]$ for $j \geq 3$.  Then $\dd_2f = 0$ implies that $-af_6 + (f_3 - f_5)h_2 = 0$ and $af_3 + bf_4 = 0$ so that $(-f_6, f_3 -f_5) = h_1(b, -a)$ and $(f_3, f_4) = h_2(b, -a)$ for some $h_i \in \kk[a, b]$, and the last component of $\dd_2f$ imposes no additional restrictions.  Hence, $f = h_1(0, 0, 0, 0, a, -b) + h_2(0, 0, b, -a , b, 0)$.  

If $f$ has bidegree $(2, i + 2)$, then $f_1, f_2 \in \kk[a, b]$, and for all $j \geq 3$, we have $f_j = c_{j,1}xa^i + c_{j,2}ya^i$ for some $c_{j,r} \in \kk$. 
Writing $f_j = \alpha_{j,0}a^{i+1} + \alpha_{j,1}a^ib + b^2f'_j$ for some $\alpha_{j, s} \in \kk$ and $f'_j \in \kk[a, b]$ for $j = 1, 2$, we see that $\dd_2f = 0$ implies 
\begin{align*}
0 &= yf_1 + bf_3 -b f_5 - af_6 =  (\alpha_{1,1} + c_{3,2} - c_{5,2} - c_{6,1})xa^{i+1} + (\alpha_{1,0} - c_{6,2})ya^{i+1}, \\
0 &= xf_1 + yf_2 + af_3 + bf_4 = (\alpha_{1,0} + \alpha_{2,1} + c_{3,1} + c_{4,2})xa^{i+1} + (\alpha_{2,0} + c_{3,2})ya^{i+1}, \\
0 &= xf_5 + yf_6 = (c_{5,1}+c_{6,2})x^2a^i,
\end{align*}
so that $\alpha_{1,0} = c_{6,2}$, $\alpha_{1,1} = -c_{3,2} + c_{5,2} + c_{6,1}$, $\alpha_{2,0} = -c_{3,2}$, $\alpha_{2,1} = -c_{3,1} - c_{4,2} - c_{6,2}$, and if $i = 0$ we also have $c_{5,1} = -c_{6,2}$.  Hence, if $i = 0$, we have $f'_j = 0$ for $j = 1, 2$, and we see that $f$ is contained in the column span of the matrix
\[ M_3 = 
\begin{pmatrix} 
0 & -b & 0 & 0 & b & b & a \\
-b & -a & 0 & -b & 0 & 0 & -b\\ \hdashline[2pt/2pt]
x & y & 0 & 0 & 0 & 0 & 0 \\
0 & 0 & x & y & 0 & 0 & 0 \\
0 & 0 & 0 & 0 & y & 0 & -x \\
0 & 0 & 0  & 0 & 0 & x & y
\end{pmatrix} .
\]
When $i \geq 1$, we see that $f$ is contained in the span of the columns of $M_3$ plus the four additional syzygies
\[
\begin{pmatrix} 
b^2 & 0 & 0 & a^2 \\ 
0 & b^2 & 0 & -ab \\ \hdashline[2pt/2pt]
0 & 0 & 0 & 0 \\ 
0 & 0 & 0 & 0\\ 
0 & 0 & xa & 0 \\ 
0 & 0 & 0 & ya
\end{pmatrix}
= 
M_3
\begin{pmatrix}
0 & 0 & 0  & 0 \\
0 & 0 & 0 & 0 \\
0 & a & 0 & 0 \\
0 & -b & 0 & 0 \\
0 & 0 & b & b \\
b & 0 & -b & -b \\
0 & 0 & 0 & a
\end{pmatrix},
\]
where the factorization on the right shows these syzygies are generated by the syzygies already found in bidegree $(2, 2)$.  

If $f$ has bidegree $(3, i + 1)$, then $f_j = c_{j,1}xa^i + c_{j,2}ya^i$ for some $c_{j,r} \in \kk$ for $j = 1, 2$ and $f_j = c_jx^2$ for some $c_j \in \kk$ for all $j \geq 3$.  Then $\dd_2f = 0$ implies $c_{1,2}x^2a^i = (c_{1,1} + c_{2,2})x^2a^i = 0$.  If $i = 0$, then $f = c_{1,1}(x, -y, 0, 0, 0, 0) + c_{2,1}(0, x, 0, 0, 0, 0)$, and if $i \geq 1$, there are no restrictions on $f$ so that $f$ is contained in the column span of the matrix
\[ 
\begin{pmatrix}
xa & ya & 0 & 0 & 0 & 0 & 0 & 0\\
0 & 0 & xa & ya & 0 & 0 & 0 & 0 \\ \hdashline[2pt/2pt]
0 & 0 & 0 & 0 & x^2 & 0 & 0 & 0\\
0 & 0 & 0& 0 & 0 & x^2 & 0 & 0 \\
0 & 0 & 0 & 0 & 0 & 0 & x^2 & 0 \\
0 & 0 & 0 & 0 & 0 & 0 & 0 & x^2
\end{pmatrix}
= M_3
\begin{pmatrix}
0 & -y & 0 & x & x & 0 & 0 & 0 \\
0 & 0 & -x & -y  & 0 & 0 & 0 & 0 \\
0 & 0 & 0 & 0 & 0 & x & 0 & 0 \\
0 & 0 & 0 & 0 & 0 & 0 & 0 & 0 \\
0 & 0 & 0 & 0 & 0 & 0 & y & 0 \\
y & -x & 0 & -y & 0 & 0 & -y & x \\
0 & y & 0 & 0 & 0 & 0 & 0 & 0
\end{pmatrix},
\]
where the factorization on the right shows these extra syzygies are generated by the syzygies already found in bidegree $(2, 2)$.  

Finally, if $f$ has bidegree $(4, i+1)$, then $i = 0$ and $f_j = 0$ for $j \geq 3$ so that $f$ is in the span of $(x^2, 0, 0, 0, 0, 0) = x(x, -y, 0, 0, 0, 0)$ and $(0, x^2, 0, 0, 0, 0) = x(0, x, 0, 0, 0, 0)$.  Hence, the columns of $\dd_3$ span the syzygies of $\Im \dd_2$, and since these linear syzygies are easily seen to be linearly independent over $\kk$, they form a minimal set of generators for the third syzygies of $U$.
\end{proof}

\begin{thm}
\hfill
\begin{enumerate}[label = \textnormal{(\alph*)}]
\item If $\ch(\kk) = 2$, then the map $\dd_3$ of the preceding lemma has a minimal quadratic syzygy.
\item If $\ch(\kk) \neq 2$, then the syzygies of the map $\dd_3$ of the preceding lemma are the image of the map $\dd_4: R(-5)^{20} \to R(-4)^{11}$ given by
\begin{center}
\scalebox{0.85}{
$\dd_4 = 
\left(
\begin{array}{cc;{2pt/2pt}cccccccccc;{2pt/2pt}cccccccc}
y & 0 & b & 0 & -b & -b & -a & 0 & 0 & -b & 0 & 0 & 0 & 0 & 0 & 0 & 0 & 0 & 0 & 0 \\
x & y & a & b & 0 & 0 & 0 & 0 & 0 & 0 & 0 & 0 & 0 & 0 & 0 & 0 & 0 & 0 & 0 & 0 \\ \hdashline[2pt/2pt]
0 & 0 & 0 & 0 & y & 0 & -x & 0 & 0 & 0 & 0 & 0 & 0 & -b & -a & -b & a & 0 & 0 & 0 \\
0 & 0 & 0 & 0 & 0 & x & y & 0 & 0 & 0 & 0 & 0  & 0 & 0 & b & 0 & -b & 0 & 0 & -b \\
0 & 0 & 0 & 0 & 0 & 0 & 0 & y & 0 & -x & 0 & 0 & b & -a & 0 & 0 & 0 & 0 & a & 0 \\
0 & 0 & 0 & 0 & 0 & 0 & 0 & 0 & x & y & 0 & 0 & 0 & b & 0 & 0 & 0 & 0 & 0 & a \\
0 & 0 & 0 & 0 & 0 & 0 & 0 & 0 & 0 & 0 & x & y & 0 & 0 & 0 & 0 & -b & -a & 0 & -b \\
0 & 0 & 0 & 0 & 0 & 0 & 2y & 0 & 0 & 0 & 0 & -2y & 0 & 0 & b & -a & 0 & a & 0 & 0 \\
0 & 0 & 0 & 0 & 0 & 0 & 0 & 0 & 0 & 0 & 0 & x & 0 & 0 & 0 & b & 0 & 0 & 0 & 0 \\ \hdashline[2pt/2pt]
0 & 0 & 0 & 0 & 0 & 0 & 0 & 0 & 0 & 0 & 0 & 0 & 0 & 0 & 0 & 0 & x & y & 0 & 0 \\
0 & 0 & 0 & 0 & 0 & 0 & 0 & 0 & 0 &0  & 0 & 0 & 0 & 0 & 0 & 0 & 0 & 0 & x & y
\end{array}
\right),
$
}
\end{center}
and $\dd_4$ has a minimal quadratic syzygy.
\end{enumerate}
Hence, in either case, $R$ is not Koszul.
\end{thm}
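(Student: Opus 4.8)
The plan is to deduce the non-Koszulness of $R$ from the proposition of \cite[1.4]{HHO} recalled above. Since $R' = \kk[x,y]/(x^2-y^2,xy)$ is a quadratic complete intersection, it is Koszul by \cite[1.2.5]{Caviglia:PhD:thesis}, and since $R'$ is an algebra retract of $R$ via the map killing $a$ and $b$, that proposition says $R$ is Koszul if and only if $R'$ has a linear resolution as an $R$-module. As $R' = R/U$ with $U = (a,b)$, a routine shift of resolutions makes this equivalent to the minimal free resolution of $U$ over $R$ being linear, i.e. to $\beta_{k,j}^R(U) = 0$ whenever $j \ne k+1$. By the preceding lemma this resolution is linear through homological degree $3$; a minimal quadratic syzygy of $\partial_3$ would give $\beta_{4,6}^R(U) \ne 0$, and a minimal quadratic syzygy of $\partial_4$ (once $\partial_4$ is known to be linear) would give $\beta_{5,7}^R(U) \ne 0$, so in either case producing such a syzygy breaks linearity and finishes the proof. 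This is exactly the content of parts (a) and (b).

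To establish (a) and (b) I would push the bihomogeneous bookkeeping of the preceding lemma one or two steps further. Since $U$ is bihomogeneous all of its higher syzygy modules have bihomogeneous minimal generators, and because $R$ is nonzero only in bidegrees $(0,i)$, $(1,i)$, and $(2,0)$, in each homological degree there are only finitely many cases on the first bidegree component to analyze, each reducing to a small $\kk$-linear system via the explicit basis of $R$ from Lemma \ref{bad:non-Koszul:algebra:basis}. For (b): running this analysis on the kernel of $\partial_3$ produces the $20$ linear syzygies recorded as the columns of $\partial_4$ and shows every syzygy in a higher bidegree is an $R$-combination of these; the lone place this reduction divides by $2$ is exactly where the coefficient ``$2y$'' appears in $\partial_4$, so it is valid precisely when $\ch(\kk) \ne 2$. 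One then repeats the analysis for $\partial_4$, exhibiting an explicit bihomogeneous vector $v$ with quadratic entries satisfying $\partial_4 v = 0$ and checking that $v$ is a \emph{minimal} generator of $\ker \partial_4$ --- that is, $v \notin R_1 \cdot (\text{linear syzygies of } \partial_4)$ --- by a finite computation confined to the bidegree of $v$; this gives $\beta_{5,7}^R(U) \ne 0$. For (a): the same machinery applied directly to $\partial_3$ in characteristic $2$, where the ``$2y$''-reductions of (b) are unavailable, turns up a bihomogeneous quadratic vector $v$ with $\partial_3 v = 0$ which, again by a finite check in its bidegree, lies outside $R_1 \cdot (\text{linear syzygies of } \partial_3)$; hence $\beta_{4,6}^R(U) \ne 0$.

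In either characteristic we thus obtain $\beta_{k,j}^R(U) \ne 0$ with $j > k+1$, so the minimal free resolution of $U$, and therefore of $R'$, over $R$ is not linear. By the proposition of \cite[1.4]{HHO}, $R$ is not Koszul. Together with the deformation argument at the start of this appendix, this shows that no ideal of form (c) in Theorem \ref{2:linear:syzygies:case} defines a Koszul algebra, completing the proof of Lemma \ref{non-Koszul:2:linear:syzygies:case} and hence of the structure theorem.

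The main obstacle is the size and delicacy of the two syzygy computations --- $\partial_3$ is $6 \times 11$ and $\partial_4$ is $11 \times 20$, so even organized by bidegree the case analysis is long --- and above all the verification that the exhibited quadratic vectors are genuinely \emph{minimal} syzygies rather than consequences of the linear ones. The bigrading is what keeps every such check a finite-dimensional linear algebra problem, and the single spot where $\ch(\kk) = 2$ changes the outcome must be isolated and tracked carefully; this bifurcation is exactly why the conclusion splits into cases (a) and (b).
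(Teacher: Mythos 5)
Your plan is correct and follows exactly the paper's approach: reduce to showing $U=(a,b)$ has a non-linear minimal free resolution over $R$ via the HHO algebra-retract criterion, exploit the bigrading so that each syzygy computation is a finite $\kk$-linear problem per bidegree, locate the unique spot where a division by $2$ occurs (the coefficient $\tfrac{1}{2}$ used to write the bidegree $(4,2)$ syzygy $(0,\dots,0,x^2,0,0,0)$ as a combination of linear syzygies of $\dd_3$), and thereby produce a minimal quadratic syzygy of $\dd_3$ when $\ch(\kk)=2$ and of $\dd_4$ when $\ch(\kk)\neq 2$. The only thing missing is the actual computation itself -- the explicit columns of $\dd_4$, the identification of the extra syzygies in higher bidegrees as $R$-combinations of lower ones, and the verification of minimality in bidegree $(5,2)$ -- which is precisely the lengthy bookkeeping you acknowledge deferring; the paper supplies all of it, and your strategy matches it exactly.
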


\begin{proof}
Let $f \in R(-3, -1)^2 \oplus R(-2, -2)^7 \oplus R(-1, -3)^2$  be a bihomogeneous syzygy on the columns of $\dd_3$.  If $f$ has bidegree $(1, i + 4)$, then $f_j = 0$ for $j \leq 9$ and $f_{10}, f_{11} \in \kk[a, b]$.  Since $\dd_3f = 0$ implies $bf_{10} = bf_{11} = 0$, it follows that $f = 0$.  

If $f$ has bidegree $(2, i + 3)$, then $f_1 = f_2 = 0$, $f_j \in \kk[a, b]$ for $3 \leq j \leq 9$, and $f_j = c_{j,1}xa^i + c_{j,2}ya^i$ for some $c_{j, r} \in \kk$ for $j = 10, 11$.  First, we note that $\dd_3f = 0$ implies $af_9 + b(-f_4 + f_7 + f_8) = 0$ and $af_4 + b(f_3 + f_6 + f_9) = 0$ so that there exist $h_1, h_2 \in \kk[a, b]$ such that: 
\[
\begin{pmatrix} f_9 \\ -f_4 + f_7 + f_8 \end{pmatrix} = h_1\begin{pmatrix} b \\ - a\end{pmatrix} \qquad 
\begin{pmatrix} f_4 \\ f_3 + f_6 + f_9 \end{pmatrix} = h_1\begin{pmatrix} b \\ - a\end{pmatrix}.
\]
Writing $f_j = \alpha_{j,0}a^{i+1} + \alpha_{j,1}a^ib + b^2f'_j$ for some $\alpha_{j, s} \in \kk$ and $f'_j \in \kk[a, b]$, this forces $h_1 = \alpha_{9,1}a^i + bf'_9$, $h_2 = \alpha_{4,1}a^i + bf'_4$, and:  
\begin{align*}
0 &= \alpha_{4,0} = \alpha_{9, 0}, \\
0 &= -f_4 + f_7 + f_8 + ah_1 = (\alpha_{7,0} + \alpha_{8,0} + \alpha_{9,1})a^{i+1} + (-\alpha_{4,1} + \alpha_{7,1} + \alpha_{8,1})a^ib, \\
& \hspace{12 em} + (-f'_4 + f'_7 + f'_8)b^2 + abf'_9, \\
0 &= f_3 + f_6 + f_9 + ah_2 = (\alpha_{3,0} + \alpha_{4,1} + \alpha_{6,0})a^{i+1} + (\alpha_{3,1} + \alpha_{6,1} + \alpha_{9,1})a^ib,  \\
& \hspace{12 em} + (f'_3 + f'_6 + f'_9)b^2 + abf'_4.
\end{align*}
Consequently, the remaining components of $\dd_3f$ imply that
\begin{align*}
0 &= xf_3 + yf_4 + bf_{11} =  (\alpha_{3,0} + \alpha_{4,1} + c_{11,2})xa^{i+1}, \\
0 &= xf_5 + yf_6 - af_{11} = (\alpha_{5,0} + \alpha_{6,1} -c_{11,1})xa^{i+1} + (\alpha_{6,0} - c_{11,2})ya^{i+1}, \\
0 &= yf_7 - xf_9 +af_{10} + bf_{11} = (\alpha_{7,1} + c_{10,1} + c_{11,2})xa^{i+1} + (\alpha_{7,0} + c_{10,2})ya^{i+1}, \\
0 &= xf_8 + yf_9 - bf_{10} = (\alpha_{8,0} + \alpha_{9,1} - c_{10,2})xa^{i+1}, 
\end{align*}
so that we have the linear equalities: 

\begin{minipage}{\textwidth}
\begin{center}
\begin{minipage}{0.3\textwidth}
\begin{align*}
\alpha_{3,0} &= -\alpha_{4,1} - c_{11,2}, \\
\alpha_{4,0} &= 0, \\
\alpha_{5,0} &= -\alpha_{6,1} + c_{11,1},
\end{align*}
\end{minipage}
\begin{minipage}{0.3\textwidth}
\begin{align*}
\alpha_{6,0} &= c_{11,2}, \\
\alpha_{7,0} &= -c_{10,2}, \\
\alpha_{7,1} &= -c_{10,1} -c_{11,2}, 
\end{align*}
\end{minipage}
\begin{minipage}{0.3\textwidth}
\begin{align*}
\alpha_{8,0} &= -\alpha_{9,1} + c_{10,2},\\
\alpha_{9,0} &= 0. 
\end{align*}
\end{minipage}
\end{center} 
\end{minipage}
\vspace{2 ex}

\noindent Moreover, if $i = 0$, then $f'_j = 0$ for all $j$ so that  we have the additional equalities 
\[ \alpha_{3,1} = -\alpha_{6,1} - \alpha_{9,1} \qquad  \alpha_{4,1} =  \alpha_{7,1}  + \alpha_{8,1} = \alpha_{8,1} - c_{10,1} - c_{11,2} \] 
so that $\alpha_{3,0} = -\alpha_{8,1} + c_{10,1}$ and $f$ is in the column span of the matrix:
\begin{center}
\scalebox{0.925}{
$
M_4 = \begin{pmatrix}
0 & 0 & 0 & 0 & 0 & 0 & 0 & 0 \\
0 & 0 & 0 & 0 & 0 & 0 & 0 & 0 \\  \hdashline[2pt/2pt]
0 & -b & -a & -b & a & 0 & 0 & 0 \\
0 & 0 & b & 0 & -b & 0 & 0 & -b \\
b & -a & 0 & 0 & 0 & 0 & a & 0 \\
0 & b & 0 & 0 & 0 & 0 & 0 & a \\
0 & 0 & 0 & 0 & -b & -a & 0 & -b \\
0 & 0 & b & -a & 0 & a & 0 & 0 \\
0 & 0 & 0 & b & 0 & 0 & 0 & 0 \\ \hdashline[2pt/2pt]
0 & 0 & 0 & 0 & x & y & 0 & 0 \\
0 & 0 & 0 & 0 & 0 & 0 & x & y
\end{pmatrix}.
$
}
\end{center}
\vspace{1 ex}
On the other hand, if $i \geq 1$, then we can write $f'_j = \alpha_{j,2}a^{i-1} + bf''_j$ for some $f''_j \in \kk[a, b]$  for $j = 4, 9$ so that: 
\[ \alpha_{3,1} = -\alpha_{6,1} - \alpha_{9,1} - \alpha_{4,2}, \qquad  \alpha_{4,1} = \alpha_{8,1} + \alpha_{9,2} - c_{10,1} - c_{11,2}, \]  
\[ f'_3 = - af''_4 - f_6 - \alpha_{9,2}a^{i-1} - bf''_9, \qquad f'_7 = \alpha_{4,2}a^{i-1}  + bf''_4 -f'_8 - af''_9. \]
In this case, $\alpha_{3,0} = -\alpha_{8,1} - \alpha_{9,2} + c_{10,1}$ so that $f$ is contained in the submodule spanned by the columns of $M_4$ and the matrix \vspace{1 ex}

\begin{minipage}{\textwidth}
\begin{center}
\scalebox{0.875}{
$
\begin{pmatrix}
0 & 0 & 0 & 0 & 0 & 0 \\
0 & 0 & 0 & 0 & 0 & 0 \\ \hdashline[2pt/2pt]
-ab & 0 & -b^2 & 0 & -a^2-b^2 &-b^3  \\
b^2 & 0 & 0 & 0 & ab & 0 \\
0 & b^2 & 0 & 0 & 0 & 0 \\
0 & 0 & b^2 & 0 & 0 & 0\\
b^2 & 0 & 0 & -b^2 & 0 & -ab^2 \\
0 & 0 & 0 & b^2 & 0 & 0\\
0 & 0 & 0 & 0 & b^2 & b^3 \\ \hdashline[2pt/2pt]
0 & 0 & 0 & 0 & 0 & 0 \\
0 & 0 & 0 & 0 & 0 & 0
\end{pmatrix}
= M_4
\begin{pmatrix}
0 & b & 0 & 0 & 0 & 0 \\
0 & 0 & b & 0 & 0 & 0 \\
0 & 0 & 0 & b & a & 0 \\
0 & 0 & 0 & 0 & b & b^2 \\
-b & 0 & 0 & b & 0 & 0 \\
0 & 0 & 0 & 0 & 0 & b^2 \\
0 & 0 & b & 0 & 0 & 0 \\
0 & 0 & 0 & 0 & 0 & 0
\end{pmatrix},
$
}
\end{center}
\vspace{1 ex}
\end{minipage}

\noindent where the factorization on the right shows these extra syzygies are generated by the syzygies already found in bidegree $(2, 3)$.  

If $f$ has bidegree $(3, i + 2)$, then $f_1, f_2 \in \kk[a, b]$, $f_j = c_{j,1}xa^i + c_{j,2}ya^i$ for some $c_{j,r} \in \kk$ for $3 \leq j \leq 9$, and $f_j = c_jx^2$ for some $c_j \in \kk$ for $j = 10, 11$.  Writing $f_j = \alpha_{j,0}a^{i+1} + \alpha_{j,1}a^ib + b^2f'_j$ for some $f'_j \in \kk[a, b]$ for $j = 1, 2$, we see that $\dd_3f = 0$ implies that
\begin{align*}
0 &= xf_1 - bf_4 + bf_7 + bf_8 + af_9, \\
&= (\alpha_{1,0} - c_{4,2} + c_{7,2} + c_{8,2} + c_{9,1})xa^{i+1} + c_{9,2}ya^{i+1}, \\
0 &= -yf_1 + xf_2 -bf_3 - af_4 - bf_6 - bf_9, \\
&= (-\alpha_{1,1} + \alpha_{2,0} - c_{3,2} - c_{4,1} - c_{6,2} - c_{9,2})xa^{i+1} + (-\alpha_{1,0} - c_{4,2})ya^{i+1}, \\
0 &= xf_3 + yf_4 + bf_{10} = (c_{3,1} + c_{4,2})x^2a^i, \\
0 &= xf_5 + yf_6 - af_{11} = (c_{5,1} + c_{6,2})x^2a^i, \\
0 &= yf_7 - xf_9 + af_{10} + bf_{11} = (c_{7,2} - c_{9,1})x^2a^i, \\
0 &= xf_8 + yf_9 - bf_{10} = (c_{8,1} + c_{9,2})x^2a^i,
\end{align*}
so that we have the linear equalities: 

\begin{minipage}{0.9\textwidth}
\begin{center}
\begin{minipage}{0.4\textwidth}
\begin{align*}
\alpha_{1,0} &= -c_{4,2}, \\
\alpha_{1,1} &= \alpha_{2,0} - c_{3,2} - c_{4,1} - c_{6,2},
\end{align*}
\end{minipage}
\begin{minipage}{0.4\textwidth}
\begin{align*}
c_{7,2} &= 2c_{4,2} - c_{8,2} - c_{9,1}, \\
c_{9,2} &= 0.
\end{align*}
\end{minipage}
\end{center} 
\end{minipage}
\vspace{2 ex}

\noindent In addition, if $i = 0$, then $f'_1 = f'_2 = 0$ and $c_{10} = c_{11} = 0$ so that we have the additional equalities
\[ c_{3,1} = -c_{4,2} \qquad c_{5,1} = -c_{6,2} \qquad c_{7,2} = c_{9,1} \qquad c_{8,1} = 0, \]
so that $c_{8,2} = 2c_{4,2} - 2c_{9,1}$.  Hence, $f$ is in the column span of the matrix:
\begin{center}
\scalebox{0.9}{
$N_4 =
\begin{pmatrix}
b & 0 & -b & -b & -a & 0 & 0 & -b & 0 & 0 \\
a & b & 0 & 0 & 0 & 0 & 0 & 0 & 0 & 0 \\ \hdashline[2pt/2pt]
0 & 0 & y & 0 & -x & 0 & 0 & 0 & 0 & 0 \\
0 & 0 & 0 & x & y & 0 & 0 & 0 & 0 & 0  \\
0 & 0 & 0 & 0 & 0 & y & 0 & -x & 0 & 0 \\
0 & 0 & 0 & 0 & 0 & 0 & x & y & 0 & 0 \\
0 & 0 & 0 & 0 & 0 & 0 & 0 & 0 & x & y  \\
0 & 0 & 0 & 0 & 2y & 0 & 0 & 0 & 0 & -2y \\
0 & 0 & 0 & 0 & 0 & 0 & 0 & 0 & 0 & x \\ \hdashline[2pt/2pt]
0 & 0 & 0 & 0 & 0 & 0 & 0 & 0 & 0 & 0 \\
0 & 0 & 0 & 0 & 0 & 0 & 0 &0  & 0 & 0
\end{pmatrix}.
$
}
\end{center}
On the other hand, if $i \geq 1$, then $f$ is in the span of the columns of $N_4$ and the columns of the matrix
\begin{center}
\vspace{-2 ex}
\scalebox{0.82}{
$
\begin{pmatrix}
b^2 & 0 & 0 & -a^2 & 0 & -ab & 0 & 0 & 0 \\
0 & b^2 & 0 & 0 & 0 & 0 & 0 & 0 & 0 \\ \hdashline[2pt/2pt]
0 & 0 & xa & 0 & 0 & 0 & 0 & 0 & 0  \\
0 & 0 & 0 &ya  & 0 & 0 & 0 & 0 & 0 \\
0 & 0 & 0 & 0 & xa & 0 & 0 & 0 & 0 \\
0 & 0 & 0 & 0  & 0 & ya & 0 & 0 & 0  \\
0 & 0 & 0 & 2ya & 0 & 0 & 0 & -ya & -ya \\
0 & 0 & 0 & 0 & 0 & 0 & xa & ya & 0 \\
0 & 0 & 0 & 0 & 0 & 0 & 0 & 0 & xa \\ \hdashline[2pt/2pt]
0 & 0 &  0 & 0 & 0 & 0 & 0 & 0 & 0 \\
0 & 0 &  0 & 0 & 0 & 0 & 0 & 0 & 0
\end{pmatrix}
= \left(\begin{array}{c;{2pt/2pt}c} N_4 & M_4 \end{array}\right)
\left(
\begin{array}{ccccccccc}
0 & 0 & 0 & 0 & 0 & 0 & 0 & 0 \\
0 & b & 0 & 0 & 0 & 0 & 0 & 0 \\
0 & 0 & b & 0 & 0 & 0 & 0 & 0 \\
-b & 0 & -b & 0 & 0 & 0 & 0 & 0 \\
0 & 0 & 0 & a & 0 & 0 & 0 & 0 \\
0 & 0 & 0 & 0 & b & b & 0 & 0 \\
0 & 0 & 0 & 0 & 0 & 0 & 0 & 0 \\
0 & 0 & 0 & 0 & 0 & a & 0 & 0  \\
0 & 0 & 0 & 0 & 0 & 0 & 0 & 0 \\
0 & 0 & 0 & a & 0 & 0 & 0 & a \\ \hdashline[2pt/2pt]
0 & 0 & 0 & 0 & 0 & 0 & 0 & 0 \\
0 & 0 & 0 & 0 & 0 & 0 & 0 & 0 \\
0 & 0 & 0 & x & 0 & 0 & -x & -2x \\
0 & 0 & 0 & -y & 0 & 0 & 0 & 0 \\
0 & 0 & 0  & x & 0 & 0 & -x & -2x \\
0 & 0 & 0 & -y & 0 & 0 & y & 2y\\
0 & 0 & 0 & 0 & 0 & 0 & 0 & 0 \\
0 & 0 & 0 & 0 & 0 & 0 & 0 & 0
\end{array} 
\right),
$
}
\end{center}
where the factorization on the right shows these extra syzygies are generated by the syzygies already found in bidegrees $(3, 2)$ and $(2, 3)$.

If $f$ has bidegree $(4, i+1)$, then we can write $f_j = c_{j,1}xa^i + c_{j,2}ya^i$ for some $c_{j, r} \in \kk$ for $j = 1, 2$, $f_j = c_jx^2$ for some $c_j \in \kk$ for $3 \leq j \leq 9$, and $f_{10} = f_{11} = 0$.  Then $\dd_3f = 0$ implies $0 = xf_1 = c_{1,1}x^2a^i$ and $0 = -yf_1 + xf_2 = (c_{1,2} - c_{2,1})x^2a^i$.  If $i = 0$, then $c_{1,1} = 0$, $c_{1, 2} = c_{2,1}$, and $c_j = 0$ for $3 \leq j \leq 9$ so that $f = c_{2,1}(y, x) + c_{2,2}(0, y) \in R(-3,-1)^2$.  On the other hand, if $i \geq 1$, then there are no restrictions on the $c_{j, r}$ or $c_j$ so that $f$ is contained in the submodule of $R(-3,-1)^2 \oplus R(-2,-2)^7$ spanned by the columns of the matrix below and the syzygy $s = (0, 0, 0, 0, 0, 0, 0, x^2, 0)$.
\begin{center}
\scalebox{0.85}{
$
\begin{pmatrix}
xa & ya & 0 & 0 & 0 & 0  & 0 & 0 & 0 & 0 \\
0 & 0 & xa & ya & 0 & 0 & 0 & 0 & 0 & 0 \\
0 & 0 & 0 & 0 & x^2 & 0 & 0 & 0 & 0 & 0 \\
0 & 0 & 0 & 0 & 0 & x^2 & 0 & 0 & 0 & 0 \\
0 & 0 & 0 & 0 & 0 & 0 & x^2 & 0 & 0 & 0 \\
0 & 0 & 0 & 0 & 0 & 0 & 0 & x^2 & 0 & 0 \\
0 & 0 & 0 & 0 & 0 & 0 & 0 & 0 & x^2 & 0 \\
0 & 0 & 0 & 0 & 0 & 0 & 0 & 0 & 0 & 0 \\
0 & 0 & 0 & 0 & 0 & 0 & 0 & 0 & 0 & x^2 \\
\end{pmatrix}
= N_4
\begin{pmatrix}
0 & 0 & 0 & y  & 0 & 0 & 0 & 0 & 0 & 0 \\
0 & 0 & y & 0 & 0 & 0 & 0 & 0 & 0 & 0 \\
0 & 0 & 0 & 0 & y & 0 & 0 & 0 & 0 & 0 \\
0 &  x & 0 & 0 & -y & x & 0 & 0 & 0 & 0 \\
0 & -y & 0 & 0 & 0 & 0 & 0 & 0 & 0 & 0 \\
0 & 0 & 0 & 0 & 0 & 0 & y & 0 & 0 & 0 \\
x & 0 & 0 & -x & 0 & 0 & 0 & x & 0 & 0 \\
-y & 0 & 0 & y & 0 & 0 & 0  & 0 & 0 & 0 \\
0 & x & 0 & 0 & 0 & 0 & 0 & 0 & x & 0 \\
0 & -y & 0 & 0 & 0 & 0 & 0 & 0 & 0 & x
\end{pmatrix}.
$
}
\end{center}
The factorization on the right above shows all of these extra syzygies except possibly $s$ are generated by the syzygies already found in bidegree $(3, 2)$. If $\ch(\kk) \neq 2$, then $s = N_4v$ where $v = (0, 0, 0, 0, 0, 0, 0, 0, \frac{1}{2}x, -\frac{1}{2}y)$ so that $s$ is also redundant.  However, if $\ch(\kk) = 2$, it is easily seen that $s = \left(\begin{array}{c;{2pt/2pt}c} N_4 & M_4 \end{array}\right)v$ has no bihomogeneous solutions since for every $v \in R(-3, -2)^{10} \oplus R(-2, -3)^8$ of bidegree $(4, 2)$ the eighth coordinate of $\left(\begin{array}{c;{2pt/2pt}c} N_4 & M_4 \end{array}\right)v$ is zero.  Thus, $s$ is a minimal quadratic syzygy of $\dd_3$, which proves part (a), and so, we may assume that $\ch(\kk) \neq 2$ for the remainder of the proof.

If $f$ has bidegree $(5, i + 1)$, then $i = 0$ and $f_j = 0$ for $j \geq 3$ so that $f \in R(-3, -1)^2$, and $f$ is contained is in the span of $(x^2, 0) = y(y, x)$ and $(0, x^2) = y(0, y)$ and, therefore, in the span of the syzygies already found in bidegree $(4, 1)$.  Hence, the columns of $\dd_4$ span the syzygies of $\Im \dd_3$, and since these syzygies are easily seen to be linearly independent over $\kk$, they form a minimal set of generators for the fourth syzygies of $U$.

To complete the proof of the theorem, we will show that $s = (0, \dots, 0, x^2) \in R(-4,-1)^2 \oplus R(-3, -2)^{10}$ is a minimal quadratic syzygy of bidegree $(5, 2)$ on the columns of $\dd_4$.  If $s$ were not minimal, then it would be contained in the span of the syzygies of bidegrees $(5, 1)$ and $(4, 2)$.  We will show that this is not the case.  As above, the minimal syzygies of bidegree $(5, 1)$ are easily seen to be $(x, -y), (0, x) \in R(-5, -1)^2$.  If $f \in R(-4,-1)^2 \oplus R(-3, -2)^{10}$ is a syzygy of bidegree $(4, 2)$, then $f_j = \alpha_{j,0}a + \alpha_{j,1}b$ for some $\alpha_{j, s} \in \kk$ for $j = 1, 2$ and $f_j = c_{j,1}x + c_{j,2}y$ for some $c_{j,r} \in \kk$ for $j \geq 3$.  It follows from $\dd_4f = 0$ that
\begin{align*}
0 &= yf_1 + b(f_3 - f_5 - f_6 - f_{10}) - af_7, \\
&= (\alpha_{1,1} + c_{3,2} - c_{5,2} - c_{6,2} - c_{7,1} - c_{10,2})xa + (\alpha_{1,0} - c_{7,2})ya, \\
0 &= xf_1 + yf_2 + af_3 + bf_4 = (\alpha_{1,0} + \alpha_{2,1} + c_{3,1} + c_{4,2})xa + (\alpha_{2,0} + c_{3,2})ya, \\
0 &= xf_j + yf_{j+1} = (c_{j,1} + c_{j+1,2})x^2, \qquad j \in \{6, 9, 11\},
\end{align*}
\begin{minipage}{\textwidth}
\begin{center}
\begin{minipage}{0.4\textwidth}
\begin{align*}
0 &= yf_5 - xf_7 = (c_{5,2} - c_{7,1})x^2, \\
0 &= yf_8 - xf_{10} = (c_{8,2} - c_{10,1})x^2,
\end{align*}
\end{minipage}
\hspace{1 em}
\begin{minipage}{0.4\textwidth}
\begin{align*}
0 &= 2yf_7 - 2yf_{12} = (2c_{7,2} - 2c_{12,2})x^2, \\
0 &= xf_{12} = c_{12,1}x^2,
\end{align*}
\end{minipage}
\end{center}
\vspace{1 ex}
\end{minipage}

\noindent so that

\begin{minipage}{\textwidth}
\begin{center}
\begin{minipage}{0.25\textwidth}
\begin{align*}
\alpha_{1,0} &= c_{7,2} = c_{12, 2}, \\
\alpha_{1, 1} &= -c_{3,2} + c_{6,2} + 2c_{7,1} + c_{10,2}, \\
\alpha_{2,0} &= -c_{3,2}, \\
\alpha_{2,1} &= -c_{3,1} - c_{4,2} -c_{12,2},
\end{align*}
\end{minipage}
\hspace{0.5 em}
\begin{minipage}{0.25\textwidth}
\begin{align*}
c_{5,2} &= c_{7,1}, \\
c_{6,1} &= -c_{12, 2}, \\
c_{8,2} &= c_{10,1},
\end{align*}
\end{minipage}
\hspace{0.5 em}
\begin{minipage}{0.25\textwidth}
\begin{align*}
c_{9,1} &= -c_{10,2}, \\
c_{11,1} &= -c_{12, 2}, \\
c_{12,1} &= 0,
\end{align*}
\end{minipage}
\end{center}
\vspace{1 ex}
\end{minipage}

\noindent and $f$ is contained in the column span of the matrix
\begin{center}
\scalebox{0.9}{
$
\begin{pmatrix}
0 & -b & 0 & 0 & 0 & b & 2b & 0 & 0 & 0 & b & 0 & a \\
-b & -a & 0 & -b & 0 & 0 & 0 & 0 & 0 & 0 & 0 & 0 & -b \\ \hdashline[2pt/2pt]
x & y & 0 & 0 & 0 & 0 & 0 & 0 & 0 & 0 & 0 & 0 & 0 \\
0 & 0 & x & y & 0 & 0 & 0 & 0 & 0 & 0 & 0 & 0 & 0 \\
0 & 0 & 0 & 0 & x & 0 & y & 0 & 0 & 0 & 0 & 0 & 0 \\
0 & 0 & 0 & 0 & 0 & y & 0 & 0 & 0 & 0 & 0 & 0 & -x \\
0 & 0 & 0 & 0 & 0 & 0 & x & 0 & 0 & 0 & 0 & 0 & y \\
0 & 0 & 0 & 0 & 0 & 0 & 0 & x & 0 & y & 0 & 0 & 0 \\
0 & 0 & 0 & 0 & 0 & 0 & 0 & 0 & y & 0 & -x & 0 & 0 \\
0 & 0 & 0 & 0 & 0 & 0 & 0 & 0 & 0 & x & y & 0 & 0 \\
0 & 0 & 0 & 0 & 0 & 0 & 0 & 0 & 0 & 0 & 0 & y & -x \\
0 & 0 & 0 & 0 & 0 & 0 & 0 & 0 & 0 & 0 & 0 & 0 & y \\
\end{pmatrix}.
$
}
\end{center}
If $Q$ denotes the matrix whose columns are the syzygies of bidegree $(5, 1)$ followed by the above matrix, it is easily seen that $s = Qv$ has no homogeneous solution $v \in R(-5, -1)^2 \oplus R(-4, -2)^{13}$ since the last coordinate of $v$ would necessarily be a linear form of the form $y + cx$ for some $c \in \kk$, but then the first coordinate of $Qv$ must contain $ya$ and, therefore, cannot be zero.  Thus, $s$ is a minimal quadratic 5th syzygy of $U$ so that $R' \iso R/U$ does not have a linear resolution over $R$, and $R$ is not Koszul.
\end{proof}

\end{appendix}

\end{spacing}


\begin{thebibliography}{{\c C}WW95}

\bibitem[Aus61]{rigidity:of:Tor}
M.~Auslander.
\newblock Modules over unramified regular local rings. 
\newblock {\em Illinois J. Math.} 5 (1961), 631--647. 

\bibitem[Avr98]{infinite:free:resolutions}
L.~Avramov. 
\newblock Infinite free resolutions. 
\newblock {\em Six lectures on commutative algebra (Bellaterra, 1996)}, 1--118, 
\newblock Progr. Math., 166, Birkh\"auser, Basel, 1998.

\bibitem[ACI10]{free:resolutions:over:Koszul:algebras}
L.~Avramov, A.~Conca, and S.~Iyengar.
\newblock Free resolutions over commutative Koszul algebras.
\newblock {\em Math. Res. Lett.} 17 (2010), no. 2, 197--210. 

\bibitem[BHI17]{Koszul:algebras:defined:by:3:quadrics}
A.~Boocher, S.~H.~Hassanzadeh, and S.~Iyengar.
\newblock Koszul algebras defined by three relations. 
\newblock {\em Homological and computational methods in commutative algebra}, 53--68, 
\newblock Springer INdAM Ser., 20, Springer, Cham, 2017. 

\bibitem[BC03]{upper:semicontinuity}
W.~Bruns and A.~Conca.
\newblock Gr\"obner bases and determinantal ideals. 
\newblock {\em Commutative algebra, singularities and computer algebra (Sinaia, 2002)}, 9--66, 
\newblock NATO Sci. Ser. II Math. Phys. Chem., 115, Kluwer Acad. Publ., Dordrecht, 2003.

\bibitem[BGT97]{normal:polytopes:and:triangulations}
W.~Bruns, J.~Gubeladze, and N.~V.~Trung.
\newblock Normal polytopes, triangulations, and Koszul algebras. 
\newblock {\em J. Reine Angew. Math.} 485 (1997), 123--160. 

\bibitem[BH93]{Bruns:Herzog}
W.~Bruns and J.~Herzog.
\newblock {\em Cohen-Macaulay rings.}  
\newblock Cambridge Studies in Advanced Mathematics, 39. 
\newblock Cambridge University Press, Cambridge, 1993.

\bibitem[BV88]{Bruns:Vetter}
W.~Bruns and U.~Vetter.
\newblock {\em Determinantal rings.} 
\newblock Lecture Notes in Mathematics, 1327. 
\newblock Springer-Verlag, Berlin, 1988.

\bibitem[BE73]{what:makes:a:complex:exact?}
D.~Buchsbaum and D.~Eisenbud.
\newblock What makes a complex exact? 
\newblock {\em J. Algebra} 25 (1973), 259--268. 

\bibitem[Cav04]{Caviglia:PhD:thesis}
G. Caviglia.
\newblock Koszul algebras, Castelnuovo-Mumford regularity and generic initial ideal. 
\newblock Ph.D. Thesis, University of Kansas, 2004.

\bibitem[Con14]{Koszul:algebras:and:their:syzygies} 
A.~Conca.
\newblock Koszul algebras and their syzygies. 
\newblock {\em Combinatorial algebraic geometry}, 1--31, 
\newblock Lecture Notes in Math., 2108, Fond. CIME/CIME Found. Subser., Springer, Cham, 2014. 

\bibitem[CDR13]{Koszul:algebras:and:regularity}
A.~Conca, E.~De Negri, and M.E.~Rossi.
\newblock Koszul algebras and regularity. 
\newblock {\em Commutative algebra}, 285--315, Springer, New York, 2013. 

\bibitem[CI+15]{absolutely:Koszul:algebras}
A.~Conca, S.~Iyengar, H.D.~Nguyen, and T.~R\"omer.
\newblock Absolutely Koszul algebras and the Backelin-Roos property.
\newblock {\em Acta Math. Vietnam.} 40 (2015), no. 3, 353--374. 

\bibitem[CRV01]{Grobner:flags}
A.~Conca, M.~E.~Rossi, G.~Valla.
\newblock Gr\"obner flags and Gorenstein algebras.
\newblock {\em Compositio Math.} 129 (2001), no. 1, 95--121. 

\bibitem[Eis95]{Eisenbud}
D.~Eisenbud.
\newblock {\em Commutative algebra. With a view toward algebraic geometry.} 
\newblock Graduate Texts in Mathematics, 150. 
\newblock Springer-Verlag, New York, 1995.

\bibitem[Eis05]{geometry:of:syzygies}
D.~Eisenbud.
\newblock {\em The geometry of syzygies.} 
\newblock A second course in commutative algebra and algebraic geometry. 
\newblock Graduate Texts in Mathematics, 229. 
\newblock Springer-Verlag, New York, 2005.

\bibitem[EHV92]{direct:methods:for:primary:decomposition}
D.~Eisenbud, C.~Huneke, and W.~Vasconcelos.
\newblock Direct methods for primary decomposition.
\newblock {\em Invent. Math.} 110 (1992), no. 2, 207--235.

\bibitem[ERT94]{Eisenbud:Reeves:Totaro}
D.~Eisenbud, A.~Reeves, and B.~Totaro.
\newblock Initial ideals, Veronese subrings, and rates of algebras.
\newblock {\em Adv. Math.} 109 (1994), no. 2, 168--187. 

\bibitem[EH12]{Herzog:Ene}
V.~Ene and J.~Herzog.
\newblock {\em Gr\"obner bases in commutative algebra.} 
\newblock Graduate Studies in Mathematics, 130. 
\newblock American Mathematical Society, Providence, RI, 2012.

\bibitem[Eng07]{Engheta}
B.~Engheta.
\newblock On the projective dimension and the unmixed part of three cubics.
\newblock {\em J. Algebra} 316 (2007), no. 2, 715--734. 

\bibitem[Fr\"o99]{Froberg:Koszul:algebras:survey}
R.~Fr\"oberg.
\newblock Koszul algebras. 
\newblock {\em Advances in commutative ring theory (Fez, 1997)}, 337--350, 
\newblock Lecture Notes in Pure and Appl. Math., 205, Dekker, New York, 1999. 

\bibitem[M2]{Macaulay2}
D.~Grayson and M.~Stillman.
\newblock Macaulay2, a software system for research in algebraic geometry. 
\newblock Available at \url{http://www.math.uiuc.edu/Macaulay2/}.

\bibitem[HHO00]{HHO}
J.~Herzog, T.~Hibi and H.~Ohsugi.
\newblock Combinatorial pure subrings
\newblock {\em Osaka J. Math.} 37 (2000), no. 2, 747--757.

\bibitem[HI05]{Koszul:modules}
J.~Herzog and S.~Iyengar. 
\newblock Koszul modules. 
\newblock {\em J. Pure Appl. Algebra} 201 (2005), no. 1-3, 154--188. 

\bibitem[Hib87]{Hibi:rings}
T.~Hibi.
\newblock Distributive lattices, affine semigroup rings and algebras with straightening laws. 
\newblock {\em Commutative algebra and combinatorics (Kyoto, 1985)}, 93--109, 
\newblock Adv. Stud. Pure Math., 11, North-Holland, Amsterdam, 1987. 

\bibitem[HM+13]{projective:dimension:of:height:2:ideals:of:quadrics}
C.~Huneke, P.~Mantero, J.~McCullough, and A.~Seceleanu.
\newblock The projective dimension of codimension two algebras presented by quadrics. 
\newblock {\em J. Algebra} 393 (2013), 170--186. 

\bibitem[Lev80]{Levin}
G.~Levin.
\newblock Large homomorphisms of local rings. 
\newblock {\em Math. Scand.} 46 (1980), no. 2, 209--215. 

\bibitem[MM20]{Koszul:algebras:defined:by:4:quadrics}
P.~Mantero and M.~Mastroeni. 
\newblock Betti numbers of Koszul algebras defined by four quadrics. 
\newblock {\em J. Pure Appl. Algebra}.  To appear. 

\bibitem[Mas18]{Koszul:ACI's}
M.~Mastroeni.
\newblock Koszul almost complete intersections.
\newblock {\em J. Algebra} 501 (2018), 285--302. 

\bibitem[Mat89]{Matsumura}
H.~Matsumura.
\newblock {\em Commutative ring theory.}
\newblock Translated from the Japanese by M. Reid. Second edition. 
\newblock Cambridge Studies in Advanced Mathematics, 8. 
\newblock Cambridge University Press, Cambridge, 1989.

\bibitem[OH99]{Koszul:bipartite:edge:rings}
H.~Ohsugi and T.~Hibi.
\newblock Koszul bipartite graphs.
\newblock {\em Adv. in Appl. Math.} 22 (1999), no. 1, 25--28. 

\bibitem[Roo05]{GoodBad}
J.E.~Roos.
\newblock Good and bad Koszul algebras and their Hochschild homology. 
\newblock {\em J. Pure Appl. Algebra}  201 (2005), no. 1--3, 295--327.

\bibitem[Roo16]{quadratic:algebras:in:4:variables}
J.E.~Roos.
\newblock Homological properties of the homology algebra of the Koszul complex of a local ring: examples and questions.
\newblock {\em J. Algebra} 465 (2016), 399--436. 

\bibitem[Sch79]{dualizing:complexes:and:sop's}
P.~Schenzel.
\newblock Dualizing complexes and systems of parameters. 
\newblock {\em J. Algebra} 58 (1979), no. 2, 495--501. 

\bibitem[Sta78]{Hilbert:functions:of:graded:algebras}
R.~Stanley.
\newblock Hilbert functions of graded algebras. 
\newblock {\em Adv. Math.} 28 (1978), no. 1, 57--83. 

\bibitem[SW89]{Grassmannians:are:G-quadratic}
B.~Sturmfels and N.~White.
\newblock Gr\"obner bases and invariant theory. 
\newblock {\em Adv. Math.} 76 (1989), no. 2, 245--259. 

\end{thebibliography}
\end{document}